\renewcommand{\hat}{\widehat}
\newcommand{\R}{\mathbf{R}}
\newcommand{\C}{\mathbf{C}}
\newcommand{\Q}{\mathbf{Q}}
\newcommand{\Z}{\mathbf{Z}}
\newcommand{\A}{\mathbf{A}}
\renewcommand{\P}{\mathbf{P}}
\newcommand{\m}{\mathfrak{m}}
\newcommand{\G}{\mathbb G}
\DeclareMathOperator{\Pic}{Pic}
\DeclareMathOperator{\Div}{Div}
\DeclareMathOperator{\ord}{ord}
\newcommand{\OO}{\mathcal O}
\DeclareMathOperator{\QAlb}{QAlb}
\DeclareMathOperator{\Alb}{Alb}
\DeclareMathOperator{\Supp}{Supp}
\DeclareMathOperator{\NS}{NS}
\DeclareMathOperator{\cNS}{c-\NS}
\DeclareMathOperator{\wNS}{w-\NS}
\DeclareMathOperator{\Ind}{Ind}
\DeclareMathOperator{\Aut}{Aut}
\newcommand{\DivInf}{\Div_\infty}
\DeclareMathOperator{\Weil}{Weil}
\DeclareMathOperator{\Cartier}{Cartier}
\DeclareMathOperator{\Cinf}{\Cartier_\infty(X_0)}
\DeclareMathOperator{\Winf}{\Weil_\infty(X_0)}
\DeclareMathOperator{\CCinf}{\mathcal C_\infty}
\newtheorem{thm}{Theorem}[section]
\newtheorem{thm*}{Theorem}
\newtheorem{bigthm}{Theorem}
\newtheorem{prop}[thm]{Proposition}
\newtheorem{cor}[thm]{Corollary}
\newtheorem{lemme}[thm]{Lemma}
\theoremstyle{definition}
\newtheorem{dfn}[thm]{Definition}
\newtheorem{rmq}[thm]{Remark}
  \setlist[enumerate]{label={(\roman*)}}
  \setlist[enumerate,1]{label={(\arabic*)}}
  \setlist[enumerate,1]{label={(\arabic*)}}
  \setlist[enumerate,1]{label={(\arabic*)}}
\begin{document}
\title{A dynamical characterisation of smooth cubic affine surfaces of Markov type}
\author{Marc Abboud \\ Université de Neuchâtel}
\address{Marc Abboud, Institut de mathématiques, Université de Neuchâtel
\\ Rue Emile-Argand 11 CH-2000 Neuchâtel}
\email{marc.abboud@normalesup.org}
\subjclass[2020]{37F10, 37F80, 13A18, 37P99}
\keywords{Algebraic dynamics, valuations, affine surfaces}
\thanks{The author acknowledge support by the Swiss National Science Foundation Grant “Birational transformations of
higher dimensional varieties” 200020-214999.}
\begin{abstract}
  Using valuative techniques, we show that a smooth affine surface with a non-elementary automorphism group and
  completable by a cycle of rational curves is either the algebraic torus or a smooth cubic affine surface of Markov
  type. Furthermore we show that smooth cubic affine surfaces of Markov type do no admit dominant endomorphisms which
  are not automorphisms.
\end{abstract}
\maketitle

\section{Introduction}\label{sec:intro}

Let $K$ be an algebraically closed field and let $X_0$ be an irreducible smooth affine surface. A \emph{completion} of $X_0$ is a
smooth projective surface $X$ that contains $X_0$ a Zariski dense open subset. If $f$ is an automorphism of $X_0$, the
\emph{dynamical degree} $\lambda (f)$ of $f$ is defined as follows: take a completion $X$ of $X_0$ and $H$ an ample divisor over $X$,
then
\begin{equation}
  \lambda (f) = \lim_n \left( (f^n)^* H \cdot H \right)^{1/n}.
  \label{}
\end{equation}
The limit exists, it does not depend on $X$ or on the choice of the ample divisor $H$. We always have that $\lambda
(f) \geq 1$.
we say that an automorphism of $X_0$ is \emph{loxodromic} if its dynamical degree is $>1$. It follows from the author's
work in \cite{abboudDynamicsEndomorphismsAffine2023} that if $K=\C$ the topological entropy of $f$ is equal to $\log
\lambda(f)$ so loxodromic automorphisms are exactly the ones with positive entropy. We say that $\Aut (X_0)$ is
\emph{non-elementary} if it contains two loxodromic automorphisms with no common iterates.

In \cite{abboudDynamicsEndomorphismsAffine2023}, the author studied the dynamics of loxodromic automorphisms of normal
affine surfaces. Using Gizatullin's work on affine surfaces, we showed that there is a dichotomy. If $X_0$ is a normal
affine surface with a loxodromic automorphism, then either $X_0$ is
completable by a zigzag of rational curves or by a cycle of rational curves. We have the following equivalent
conditions. We denote by $\overline \kappa (X_0)$ the log Kodaira dimension of $X_0$.

\begin{prop}\label{prop:charac-cycle}
  Let $X_0$ be a normal affine surface with a loxodromic automorphism, then we have the following dichotomy: either
  $X_0$ is completable by a tree of rational curves or by a cycle of rational curves and
  the following are equivalent. 
  \begin{enumerate}
    \item $X_0$ is completable by a cycle of rational curves. 
    \item $\overline \kappa (X_0) = 0$. 
    \item For every loxodromic automorphism $f$ of $X_0$, $\lambda (f)$ is a quadratic integer.
  \end{enumerate}
 Or
 \begin{enumerate}
   \item $X_0$ is completable by a tree of rational curves. 
   \item $\overline \kappa (X_0) = - \infty$.
   \item For every loxodromic automorphism $f$ of $X_0$, $\lambda (f)$ is an integer.
 \end{enumerate}
\end{prop}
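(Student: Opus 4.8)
The plan is to take the chain-versus-cycle dichotomy for granted: this is Gizatullin's structure theory together with the analysis of \cite{abboudDynamicsEndomorphismsAffine2023}, which gives that a normal affine surface carrying a loxodromic automorphism has a smooth completion $(X,D)$ whose boundary dual graph is either a linear chain (equivalently, $X_0$ is a Gizatullin surface) or a cycle, these two possibilities being mutually exclusive. Granting this, it is enough to prove the forward implications $(1)\Rightarrow(2)$ and $(1)\Rightarrow(3)$ in each column; the remaining implications then follow formally, since $\overline\kappa(X_0)=0$ excludes the chain case (which yields $\overline\kappa=-\infty$) and hence forces the cycle case, and likewise condition $(3)$ in one column excludes the other column.

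For the chain column the Kodaira-dimension step is soft: by Gizatullin's theorem a Gizatullin surface carries an $\A^1$-fibration over a curve $C$, so it is $\A^1$-ruled, and the easy-addition inequality $\overline\kappa(X_0)\le\dim C+\overline\kappa(\A^1)=-\infty$ gives $(1)\Rightarrow(2)$. For the cycle column I would pin down $\overline\kappa(X_0)=0$ by eliminating the three other values. Writing $D=\sum_i D_i$ for an SNC cycle of rational curves, adjunction gives $(K_X+D)\cdot D_i=(2g(D_i)-2)+(D-D_i)\cdot D_i=-2+2=0$, so $K_X+D$ is numerically trivial along the boundary and $X_0$ is a log Calabi–Yau surface; now $\overline\kappa=-\infty$ is excluded because a smooth affine surface with $\overline\kappa=-\infty$ is $\A^1$-ruled (Miyanishi–Sugie), hence chain-completable, against the dichotomy; $\overline\kappa=2$ is excluded because a surface of log general type has finite group of birational automorphisms (Iitaka), while a loxodromic automorphism has infinite order; and $\overline\kappa=1$ is excluded because the log-Iitaka fibration $X_0\to C$ over a curve is canonical, hence preserved by an iterate of every automorphism, whereas a surface automorphism preserving a fibration has dynamical degree $1$. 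This gives $(1)\Rightarrow(2)$.

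For $(1)\Rightarrow(3)$ in both columns I would use the valuative, Picard–Manin formalism of \cite{abboudDynamicsEndomorphismsAffine2023}: a loxodromic $f$ acts on the space of divisorial valuations centred at infinity, and an iterate of it preserves the combinatorial structure of the boundary. In the cycle case this structure carries a natural rank-two lattice with a $\GL_2(\Z)$-action — exactly the way the monomial automorphisms of the torus $\G_m^2$ act on its character lattice — through which $\lambda(f)$ is realised as the spectral radius of an integral $2\times 2$ matrix, hence a quadratic integer (equivalently, $\lambda(f)+\lambda(f)^{-1}\in\Z$). In the chain case the analogous invariant structure is one-dimensional: an iterate of $f$ fixes a valuation at infinity and scales the associated primitive divisor class by $\lambda(f)$, so $\lambda(f)$ is a rational Perron number, hence a rational integer — this is already the mechanism for generalized Hénon maps of $\A^2$, whose dynamical degree is a positive integer. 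I would take the precise form of this analysis from \cite{abboudDynamicsEndomorphismsAffine2023}.

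I expect the genuine obstacle to be precisely this last step, namely controlling the algebraic degree of $\lambda(f)$. The Kodaira-dimension equivalences rest on standard inputs (adjunction, Iitaka's finiteness theorem, the Miyanishi–Sugie classification of $\overline\kappa=-\infty$, and the non-existence of invariant fibrations for loxodromic surface automorphisms), whereas showing that $\lambda(f)$ is a quadratic integer in the cycle case and a rational integer in the chain case requires a precise description of the $f^*$-action on the valuations at infinity and the identification of a small $f^*$-invariant subspace carrying the entire dynamical degree. That is where the cyclic — respectively linear — combinatorics of the boundary, and the affineness of $X_0$, are used in an essential way, and it is the part for which I would lean most heavily on \cite{abboudDynamicsEndomorphismsAffine2023}.
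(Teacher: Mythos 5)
Your overall architecture matches the paper's: the tree/cycle dichotomy and the statement that $\lambda(f)$ is an integer in the tree case and a quadratic integer in the cycle case are both imported from \cite{abboudDynamicsEndomorphismsAffine2023} (the paper restates them in Section~3 without reproving them), so the genuinely new content of the proposition is the identification of the two cases with $\overline\kappa(X_0)=-\infty$ and $\overline\kappa(X_0)=0$, and your formal reduction to the implications $(1)\Rightarrow(2)$ and $(1)\Rightarrow(3)$ in each column is sound. For the Kodaira-dimension step your route differs from the paper's. The paper proves a Riemann--Roch lemma: for a reduced connected SNC divisor $D=\sum_i C_i$ on a smooth rational surface, $h^0(K_X+D)=\sum_i p_a(C_i)+e(D)$ with $e(D)=1-r+\sum_{i<j}C_i\cdot C_j$, so when the boundary consists of rational curves $h^0(K_X+\Delta_X)$ is the first Betti number of the dual graph. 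A cycle therefore gives $h^0(K_X+\Delta_X)=1$, i.e.\ $K_X+\Delta_X$ is effective and $\overline\kappa\ge 0$ at once; combined with $\overline\kappa\le 0$ (a loxodromic automorphism excludes $\overline\kappa\in\{1,2\}$, exactly as you argue via invariant fibrations and Iitaka finiteness) this yields $\overline\kappa=0$. For the zigzag case the paper uses the normal form $F\vartriangleright E\vartriangleright Z$ with $F^2=0$ and computes $(K_X+\Delta_X)\cdot C=-1$ for a general fiber $C$ of $\left|F\right|$, so $K_X+\Delta_X$ is not pseudo-effective; your easy-addition argument from the $\A^1$-ruling reaches the same conclusion and is fine.

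The one step I would not accept as written is your exclusion of $\overline\kappa=-\infty$ in the cycle column. The adjunction computation $(K_X+\Delta_X)\cdot D_i=0$ gives numerical triviality along the boundary, but that does not produce a section of any multiple of $K_X+\Delta_X$ and so does not by itself rule out $\overline\kappa=-\infty$; and the chain ``$\overline\kappa=-\infty\Rightarrow\A^1$-ruled $\Rightarrow$ chain-completable'' has an unjustified second arrow: being $\A^1$-ruled is strictly weaker than being a Gizatullin surface, whose characterisation requires two distinct $\A^1$-fibrations (or an automorphism group acting with an open orbit with finite complement), not a single ruling. This is patchable in the present setting --- a loxodromic automorphism cannot preserve an $\A^1$-fibration, so pushing one forward produces a second one and Gizatullin's theorem applies, modulo the short list of exceptional surfaces --- but the paper's $h^0$ computation makes the detour unnecessary: the cycle in the dual graph directly supplies the section of $K_X+\Delta_X$ that your argument is missing.
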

The case of a tree of rational curve is quite rich, the affine plane $\A^2$ is the main example of such an affine surface but there are
many other examples of non isomorphic affine surfaces. In \cite{blancAffineSurfacesHuge2013}, Blanc and Dubouloz showed
that there are affine surfaces completable by a zigzag with a huge automorphism group. In
\cite{botSmoothComplexRational2023}, Bot showed that there are moduli spaces of such surfaces which gives a smooth rational
complex affine surface with uncountably many non-isomorphic real forms. 

For the cycle case, the main example is the algebraic torus $\G_m^2$ and cubic affine surfaces of Markov type, i.e the
complement of a triangle of lines $\Delta$ in a smooth cubic projective surface $S$ in $\P^3$, (see
\cite{el-hutiCubicSurfacesMarkov1974}). If $q$ is one of the three intersection points of the triangle of lines in
$S$, it defines an involution $\sigma_p$ as follows: if $p \in S \setminus \Delta$, the line between $q$ and $p$
intersects $S \setminus \Delta$ in one other point which is $\sigma_q (p)$. Up to finite index the automorphism group of
such an affine surface is generated
by the three involutions $\sigma_{q_1}, \sigma_{q_2}, \sigma_{q_3}$ where $q_1, q_2, q_3$ are the three intersection
points in $\Delta$. These families of cubic affine surfaces have been studied extensively as they appear in different
areas of mathematics. They are related to the Painlevé Equation, to the character varieties of the 4-punctured sphere or
the once punctured torus, see \cite{cantatHolomorphicDynamicsPainleve2007}.
The purpose of this paper is to prove the following theorem which states that these are the only smooth examples.

\begin{bigthm}\label{thm:charac-Markov}
  Let $X_0$ be a smooth affine surface over an algebraically closed field $K$ with a non-elementary automorphism group.
  If $X_0$ is completable by a cycle of rational curves, then we have two mutually exclusive possibilities. 
  \begin{enumerate}
    \item $X_0 = \G_m^2$. 
    \item $X_0$ is a cubic affine surface of Markov type.
  \end{enumerate}
  The distinction between the two cases comes from whether $X_0$ admits non-constant invertible regular functions.
\end{bigthm}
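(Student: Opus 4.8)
The plan is to show, via Proposition~\ref{prop:charac-cycle} and a logarithmic minimal model argument, that any cycle-completion of $X_0$ is a log Calabi--Yau pair, and then to classify such pairs using non-elementarity.

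By Proposition~\ref{prop:charac-cycle}, since $X_0$ is completable by a cycle we have $\overline\kappa(X_0)=0$, and every loxodromic automorphism of $X_0$ has quadratic dynamical degree. Fix a completion $(X,D)$ with $D=D_1+\cdots+D_n$ a cycle of smooth rational curves meeting transversally; as $D$ has arithmetic genus one, $(K_X+D)\cdot D_i=0$ for every $i$. I would run the log MMP for $(X,D)$ and observe that no $(K_X+D)$-negative extremal ray can occur: a divisorial one would contract a $(-1)$-curve disjoint from $D$, impossible since $X_0$ is affine and hence has no complete curve; a fibring one, using again that $X_0$ has no complete curve, would produce an $\A^1$-fibration of $X_0$; and a contraction to a point would make $-(K_X+D)$ ample. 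Since $\overline\kappa(X_0)=0$, the latter two cannot occur either, so $K_X+D$ is nef; then $\overline\kappa(X_0)=0$ forces $(K_X+D)^2=0$, and Riemann--Roch gives an effective $E\in|m(K_X+D)|$ which is nef with $E^2=0$ and of Iitaka dimension $0$. Its class is the pluri-log-canonical class at infinity, hence invariant under $\Aut(X_0)$; were $E\not\equiv 0$, this would be an isotropic class in the closure of the ample cone fixed by all of $\Aut(X_0)$, i.e.\ a global fixed point on the boundary of the hyperbolic space on which $\Aut(X_0)$ acts, contradicting non-elementarity. Hence $E\equiv 0$, so $K_X+D\sim 0$ (as $\Pic X$ is torsion-free): $(X,D)$ is a log Calabi--Yau surface with maximal boundary, $X$ is rational, and $X_0=X\setminus D$.

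Next I would distinguish whether the pair $(X,D)$ is toric, i.e.\ whether $D$ is the toric boundary for some torus action on $X$. If it is, then $X_0$ is the dense orbit, so $X_0=\G_m^2$, which indeed has the non-elementary automorphism group $\G_m^2\rtimes\GL_2(\Z)$, the hyperbolic matrices giving loxodromic monomial maps with no common iterate. If $(X,D)$ is not toric, I would study $\Aut(X\setminus D)$ through its action on $\NS(X)$, or directly on the Picard--Manin space at infinity: up to finite index it preserves, on a suitable common model, the lattice $K_X^{\perp}$ together with the boundary cycle; the interior exceptional curves (the $(-1)$-curves meeting $D$ transversally at one point) are permuted and contribute reflections in $(-2)$-classes; and $\Aut(X_0)$ is, up to finite index, generated by $\Aut(X,D)$, these reflections, and the transformations coming from the other cycle-completions of $X_0$. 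Demanding this group be non-elementary forces the root system generated by the interior exceptional curves to be large enough to carry a non-elementary reflection group, which bounds the Picard number; together with the affineness and cycle conditions this should leave precisely the case where $X_0$ is the complement of a triangle of coplanar lines $\Delta$ in a smooth cubic surface $S\subset\P^3$ — a cubic affine surface of Markov type, whose automorphism group is non-elementary by \cite{el-hutiCubicSurfacesMarkov1974}, being generated up to finite index by the involutions $\sigma_{q_1},\sigma_{q_2},\sigma_{q_3}$.

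The hard part will be this last classification: one must rule out the remaining Looijenga pairs with a cycle boundary and affine complement that carry an infinite automorphism group — for instance $\P^2$ minus a nodal cubic — by showing these groups are nonetheless elementary. This is exactly where the hypothesis of non-elementarity rather than mere infinitude does the work, and I expect it to reduce to a rank and signature computation for $K_X^{\perp}$ and for the root system generated by the interior exceptional curves, combined with the quadraticity of dynamical degrees from Proposition~\ref{prop:charac-cycle}. Finally, the two outcomes are mutually exclusive, the distinction being by invertible functions: $\OO(\G_m^2)^{\times}/K^{\times}\cong\Z^2$, whereas on $S\setminus\Delta$ every invertible regular function is constant, since a principal divisor supported on $\Delta=\ell_1+\ell_2+\ell_3$ must vanish — intersecting with each $\ell_j$ and using $\ell_i\cdot\ell_j=1-2\delta_{ij}$ on the cubic surface forces all its coefficients to be zero — so in particular $\G_m^2$ is not isomorphic to a cubic Markov surface.
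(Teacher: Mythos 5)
Your first half is sound and is in fact a legitimate alternative route to one of the paper's intermediate results: the log MMP argument plus the observation that a non-elementary group cannot fix an isotropic class on the boundary of the hyperbolic space gives $K_X+\Delta_X\sim 0$, which is the paper's Proposition \ref{prop:trivial-k-plus-delta} (the paper instead derives this from an interpolation argument along the circle of valuations $\cC_\infty$ together with Goodman's theorem). Your verification of mutual exclusivity via invertible functions at the end is also correct and matches Proposition \ref{prop:charac-alg-torus}.

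However, there is a genuine gap at the decisive step, and you acknowledge it yourself: you reduce to ``$(X,\Delta_X)$ is a non-toric log Calabi--Yau pair with cycle boundary and affine complement and non-elementary $\Aut(X_0)$'' and then assert that a root-system/rank computation ``should leave precisely'' the cubic-minus-triangle case. This is the actual content of the theorem and it is not carried out. The class of such pairs is large --- e.g.\ $\P^2$ minus a conic plus a transverse line, $\P^2$ minus a nodal cubic (after one blow-up), and anticanonical cycles on del Pezzo surfaces of every degree all satisfy $K_X+\Delta_X\sim 0$ with affine complement --- and nothing in your sketch explains why non-elementarity excludes all of them except the cubic. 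Moreover the proposed mechanism is shaky: $\Aut(X_0)$ does not act on $\NS(X)$ of any fixed model (its elements require blow-ups at satellite points), so ``preserving $K_X^\perp$ and the interior $(-1)$-curves up to finite index on a suitable common model'' is not available without the Picard--Manin formalism. The paper closes this gap differently: non-elementarity plus the north--south dynamics of loxodromic automorphisms on the circle $\cC_\infty$ (Theorem \ref{thm:dynamics-loxodromic-automorphisms} and Proposition \ref{prop:different-eigenvaluations}) force $Z_v^2=0$ for \emph{every} $v\in\cC_\infty$ (Proposition \ref{prop:vanishing-self-intersection}) and hence $\hat E\cdot\hat F>0$ for all distinct boundary components (Corollary \ref{cor:intersection-positive}); fed into Gizatullin's minimal completions this pins down a completion whose boundary is a triangle of $(-1)$-curves (Corollary \ref{cor:completion-triangle-at-infinity}), after which $-K_X=\Delta_X$ ample with $K_X^2=3$ identifies the cubic (Theorem \ref{thm:cubic-surface}). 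Some substitute for Proposition \ref{prop:vanishing-self-intersection} --- a quantitative use of non-elementarity beyond the single invariant class $K+\Delta$ --- is what your proposal is missing.
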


\begin{rmq}\label{rmq:}
  If we do not suppose that the automorphism group is non-elementary then other examples can arise. For example, if $C$
  is a curve of degree 3 in $\P^2$ with a nodal singularity, then $X_0 := \P^2 \setminus C$ is a smooth affine surface
  completable by a cycle of rational curves. Indeed, if we blow up the nodal singularity we get a completion $X$ of
  $X_0$ such that $\Delta_X = C_1 \cup C_2$ which are two smooth rational curves that meet at two points. Here
  $\Aut (X_0)$ is virtually cyclic with the generator being a loxodromic automorphism so the automorphism group is
  elementary.
\end{rmq}

\subsection{Idea of the proof}\label{subsec:}
The proof goes as follows. We use the valuative techniques from \cite{abboudDynamicsEndomorphismsAffine2023}. In the
case of an affine surface completable by a cycle of rational curves, the space of valuations centered at infinity of
$X_0$ contains an $\Aut(X_0)$-equivariant set which is homeomorphic to a circle. This circle is homeomorphic to the
completion of the inverse limits of the dual graphs of cyclic completions. If $\Aut(X_0)$ is non-elementary, then its
action on this circle has very large orbits. This imposes strong constraints on the intersection form on the space of
divisors supported at infinity. We then conclude that $X_0$ has to be the complement of a triangle of lines in a smooth
cubic surface unless $X_0 = \G_m^2$.

\subsection{Study of endomorphisms of affine surfaces of Markov type}\label{subsec:endomorphisms-intro}
We fully classify the dynamics of cubic affine surfaces of Markov type in characteristic zero by showing that they do not have
non-invertible dominant endomorphisms. 

\begin{bigthm}\label{thm:endomorphisms-affine-surface}
  Let $K$ be a field of characteristic zero. If $X_0$ is a smooth cubic affine surface of Markov type over $K$ and $f$ is a
  dominant endomorphism of $X_0$, then $f$ is an automorphism.
\end{bigthm}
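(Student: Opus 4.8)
The plan is to exploit the rigidity of the triangle of lines $\Delta$ together with the non-elementary nature of $\Aut(X_0)$ inside a completion $X$. Let $S \subset \P^3$ be the smooth cubic and $\Delta = L_1 \cup L_2 \cup L_3$ the triangle of lines, so $X_0 = S \setminus \Delta$ and $X = S$ is a completion of $X_0$ with $\Delta_X = \Delta$ a cycle of three $(-1)$-curves. Let $f \colon X_0 \to X_0$ be a dominant endomorphism. The first step is to control the degree: since $f$ is dominant and $X_0$ is affine with $\Pic(X_0)$ torsion and no nonconstant units (the ``Markov'' case of Theorem A), $f^*$ acts on $\Div_\infty$-type data through a completion, and I would show $f$ extends to a rational self-map $F \colon S \dashrightarrow S$. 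The key point is to prove that $F$ is an automorphism of $S$, hence of $X_0$; equivalently that $F$ has no indeterminacy points and contracts no curve. Because $S$ is a del Pezzo surface of degree $3$ with $\rho(S) = 7$ and the only curves in $S$ not meeting $X_0$ are $L_1, L_2, L_3$, the complement of $\Delta$ in any resolution of $F$ is controlled, and any curve contracted by $F$ would have to be among the $27$ lines.

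The second and central step is a local analysis at infinity using the valuative machinery of \cite{abboudDynamicsEndomorphismsAffine2023}. The action of $f$ on the space of valuations centered at infinity preserves the circle $\CCinf$ attached to the cyclic completion; a dominant endomorphism induces a degree-one (or finite) monotone self-map of this circle. Here I would use that each $L_i$ is a $(-1)$-curve meeting two others transversally, so the self-intersection data on $\Div_\infty$ rigidifies: if $F$ had an indeterminacy point on some $L_i$, blowing it up would change the cycle structure and the self-intersection numbers in a way incompatible with $F$ inducing an isometry-like map on the relevant lattice, as forced by the non-elementary group $\Aut(X_0)$ commuting with the picture up to finite index. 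In fact, since $\Aut(X_0)$ contains two independent loxodromic elements and $f$ normalises this picture on $\CCinf$, the topological degree of $f$ on the circle must be $1$, which combined with $\lambda(f)$ being a quadratic integer (Proposition \ref{prop:charac-cycle}) and $f^* f_* = \deg(f) \cdot \id$ on $\NS$ forces $\deg(f) = 1$.

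Concretely, I would argue as follows. Let $e(f)$ denote the topological/geometric degree of $f \colon X_0 \to X_0$. Using that $f$ extends to $F \colon S \dashrightarrow S$ and that $F$ must send $\Delta$ into $\Delta$ (as $\Delta = X \setminus X_0$ is the boundary and $f$ is a morphism on $X_0$), $F$ restricts to a dominant self-map of the cycle $\Delta$, hence permutes the three lines up to the resolution; since each $L_i \cong \P^1$ and $F|_{L_i}$ has degree dividing $e(f)$, and the three nodes of $\Delta$ are permuted accordingly, one gets that the induced map on $\CCinf$ has degree $e(f)$. But this circle action must be compatible with the two independent loxodromic automorphisms: their north-south dynamics have distinct fixed points on $\CCinf$, and $f$ conjugating (a finite-index subgroup of) $\Aut(X_0)$ into itself — which follows because $f^{-1} \Aut(X_0) f \cap \Aut(X_0)$ still acts non-elementarily, by the classification in Theorem A applied to the normalisation of the graph of $f$ — forces $e(f) = 1$ on the circle, hence $\deg F = \deg f = 1$. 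A degree-one dominant endomorphism of the affine surface $X_0$ extending to a birational self-map of the del Pezzo surface $S$ that preserves $\Delta$ is a birational self-map of $S$ regular away from $\Delta$; since $\Delta$ supports no curve that can be contracted onto a point of $X_0$, $F$ is an isomorphism, so $f \in \Aut(X_0)$.

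The main obstacle I expect is the second step: ruling out indeterminacy points of $F$ on $\Delta$ and proving the topological degree on $\CCinf$ is $1$. Controlling a single endomorphism via the circle action is delicate because, unlike an automorphism, $f$ need not act by a homeomorphism of $\CCinf$ — it acts by a branched covering — and one must show this branched covering is trivial. The cleanest route is probably to combine (i) the constraint $\deg(f) = \lambda(f) \lambda(f^{-1}\text{-analogue})$ style identities from the spectral theory of $f^*$ on the infinite-dimensional space of divisors at infinity in \cite{abboudDynamicsEndomorphismsAffine2023}, with (ii) the arithmetic rigidity that $\lambda(f)$ is a quadratic integer and the Galois conjugate $\lambda(f)^{\sigma} = 1/\lambda(f)$ only if $\det = 1$, i.e. $f$ is ``volume preserving'' at infinity, forcing $\deg(f) = 1$. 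I would expect the characteristic-zero hypothesis to enter precisely here, to guarantee that the fixed-point/ramification counts on $\Delta$ behave as in the topological (Lefschetz) picture.
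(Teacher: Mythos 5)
There is a genuine gap, and it sits exactly where you flagged your "main obstacle": you assume from the outset that $F$ ``must send $\Delta$ into $\Delta$'', i.e.\ that $f$ is proper. This is one of the two main things that has to be \emph{proved}: a dominant endomorphism of an affine surface can send boundary divisors to curves or closed points inside $X_0$ (already $(x,y)\mapsto (x,xy)$ on $\A^2$ contracts a curve to a point), and by Proposition \ref{prop:functoriality} the action of $f_*$ preserves the data at infinity --- in particular the circle $\CCinf$ --- \emph{only when} $f$ is proper. So your entire circle-dynamics argument presupposes what needs to be shown. The paper establishes properness by a separate intersection-theoretic argument: if $f$ had nonproper boundary divisors $C_i$, then expanding $0=Z_{w_0}\cdot f^*Z_v$ via the pullback formula of Corollary \ref{cor:pullback-divisors-valuation} exhibits it as a sum of nonnegative terms containing the strictly positive contributions $d_i\,(W_{f(C_i)}\cdot Z_v)\,(\hat C_i\cdot Z_{w_0})$ from Corollary \ref{cor:technique}, a contradiction.

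Two further steps do not hold as stated. The claim that $f$ normalizes a finite-index subgroup of $\Aut(X_0)$ (``$f^{-1}\Aut(X_0)f\cap \Aut(X_0)$ still acts non-elementarily'') is unjustified: a non-invertible endomorphism need not interact with the automorphism group at all. And even granting $\deg f=1$, a birational dominant endomorphism of an affine surface need not be an automorphism --- it can be an affine modification contracting a curve of $X_0$ to a point --- so your final step also fails. The paper's route is different: it shows (i) $f$ is unramified, by pairing the ramification divisor $R$ in $K_Y+\Delta_Y=F^*(K_X+\Delta_X)+R$ against the nef classes $\hat E$ and against Goodman's ample divisor supported on $\Delta_X$, using that $Z_v\cdot(K+\Delta)=0$ for $v\in\CCinf$; (ii) $f$ is proper, as above; and then (iii) concludes because a proper \'etale self-map of the simply connected complex manifold $X_0(\C)$ is a degree-one covering, hence an automorphism. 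The simple connectivity of $X_0(\C)$ is the essential input your proposal never invokes, and it is also where the characteristic-zero hypothesis enters (reduction to $\C$), not through Lefschetz-type fixed-point counts on $\Delta$.
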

To show this result we can assume that $K = \C$, then using valuative techniques and the geometry of the space of
valuations centered at infinity we show that $f$ must be proper and unramified. Thus, it is a covering of the complex
manifold $X_0 (\C)$ which is simply connected (see eg \cite[Lemma 3.10]{cantatHolomorphicDynamicsPainleve2007}). Hence
$f$ must be a homeomorphism and therefore an automorphism.

Notice that this is not true for singular cubic affine surfaces. Indeed, let $\sC \subset \A^3$ be the Cayley cubic defined by 
\begin{equation}
  x^2 + y^2 + z^2 = xyz + 4.
  \label{<+label+>}
\end{equation}
It is the quotient of $\G_m^2$ by the involution $(u,v) \mapsto (u^{-1}, v^{-1})$. The Cayley cubic has four singular
points which are orbifold singularities of order 2. It has many endomorphisms, namely every monomial endomorphism of
$\G_m^2$ descends to an endomorphism of $\sC$. The proof fails for this surface because it is not simply connected as an
orbifold. Indeed, its orbifold fundamental group is not trivial.

\subsection*{Acknowledgements} I thank Serge Cantat and Matteo Ruggiero for related discussions on this problem. 

\section{A first lemma}\label{sec:lemma-volume-form-with-logarithmic-poles}
We restate a lemma from \cite{miyanishiNoncompleteAlgebraicSurfaces1981} in our setting. If $X$ is a smooth projective
surface and $D$ is a Cartier divisor. We write $\OO_X (D)$ for its associated line bundle over $X$ and $h^i(D)$ for the
dimension of $H^i (X, \OO_X (D))$. We also write $ \left| D \right|$ for its associated linear system. We have
$\dim \left| D \right| = h^0 (D) - 1$. Let $X_0$ be a smooth affine
surface with a loxodromic automorphism, then from \cite{abboudDynamicsEndomorphismsAffine2023}, $X_0$ is rational and we
know that either $X_0$ is completable by a cycle of rational curves or by a tree of rational curves.
\begin{prop}\label{prop:boundary-tree-iff-no-log-forms}
Let $X_0$ be a smooth affine surface with a loxodromic automorphism and let $X$ be a completion of $X_0$ such that
$\Delta_X$ is with simple normal crossings, then $h^0 (K_X + \Delta_X ) = 0$ if and only if $\Delta_X$
is a tree of rational curves.
\end{prop}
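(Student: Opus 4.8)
The plan is to prove the two implications directly, using only that $X$ is a rational surface (so $q(X)=p_g(X)=0$ and $\chi(\OO_X)=1$) and that $\Delta_X$ is, as already recalled, a connected reduced divisor all of whose components are isomorphic to $\P^1$; write $\Gamma$ for its dual graph, which is a tree or a cycle. Throughout I identify $H^0\!\big(X,\OO_X(K_X+\Delta_X)\big)$ with $H^0\!\big(X,\Omega^2_X(\log\Delta_X)\big)$, the space of regular $2$-forms on $X_0$ with at worst logarithmic poles along $\Delta_X$.

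First assume $\Delta_X$ is \emph{not} a tree; I show $h^0(K_X+\Delta_X)\ge 1$ by Riemann--Roch. Writing $\Delta_X=D_1+\dots+D_k$ with each $D_i\cong\P^1$, adjunction gives $(K_X+D_i)\cdot D_i=-2$, while $(\Delta_X-D_i)\cdot D_i$ is the degree of the vertex $D_i$ in $\Gamma$, so summing over $i$ yields $(K_X+\Delta_X)\cdot\Delta_X=2e-2k$, where $e$ is the number of edges of $\Gamma$. Since $\Gamma$ is connected and not a tree we have $e\ge k$, hence $(K_X+\Delta_X)\cdot\Delta_X\ge 0$ (and this number is exactly $0$ when $\Gamma$ is an honest cycle). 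Riemann--Roch then gives $\chi\big(\OO_X(K_X+\Delta_X)\big)=\chi(\OO_X)+\tfrac12(K_X+\Delta_X)\cdot\Delta_X\ge 1$, and by Serre duality $h^2(K_X+\Delta_X)=h^0(-\Delta_X)=0$ since $\Delta_X$ is effective and nonzero; therefore $h^0(K_X+\Delta_X)\ge\chi\big(\OO_X(K_X+\Delta_X)\big)\ge 1$.

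Now assume $\Delta_X$ is a tree and let $\omega\in H^0\!\big(X,\Omega^2_X(\log\Delta_X)\big)$; I peel off the leaves of $\Gamma$ one at a time. If $D_i$ is a leaf, i.e. meets $\Delta_X-D_i$ in at most one point, then the Poincaré residue $\operatorname{Res}_{D_i}\omega$ is a logarithmic $1$-form on $D_i\cong\P^1$ with at most one simple pole, hence a section of the line bundle $\OO_{\P^1}(m-2)$ with $m\le 1$, hence $0$. By the residue sequence
\[
0\to\Omega^2_X(\log(\Delta_X-D_i))\to\Omega^2_X(\log\Delta_X)\xrightarrow{\operatorname{Res}_{D_i}}\Omega^1_{D_i}\big(\log(D_i\cap(\Delta_X-D_i))\big)\to 0
\]
for the SNC divisor $\Delta_X$, this forces $\omega\in H^0\!\big(X,\Omega^2_X(\log(\Delta_X-D_i))\big)$. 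Removing a leaf from a tree leaves a (possibly empty) tree, so iterating down to the empty divisor gives $\omega\in H^0(X,\Omega^2_X)=H^0\big(X,\OO_X(K_X)\big)=0$, the last equality because $X$ is rational. Hence $h^0(K_X+\Delta_X)=0$.

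The points that need care are: quoting precisely the residue exact sequence and the identification $\Omega^1_{D_i}(\log Z)\cong\OO_{D_i}(K_{D_i}+Z)$ for a reduced point set $Z$ on a smooth curve $D_i$ (both standard, e.g. Esnault--Viehweg); and the combinatorial bookkeeping for $\Gamma$ in the case where the cycle carries trees attached to it — but this only makes $(K_X+\Delta_X)\cdot\Delta_X$ larger, and so leaves the Riemann--Roch estimate intact. As a shortcut, the tree case also follows immediately from Proposition~\ref{prop:charac-cycle}: a tree boundary forces $\overline{\kappa}(X_0)=-\infty$, whence $h^0\big(m(K_X+\Delta_X)\big)=0$ for all $m\ge 1$, in particular for $m=1$.
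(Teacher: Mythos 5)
Your argument is correct, but it splits into two halves that relate differently to the paper's proof. The paper establishes a single exact formula, $h^0(K_X+D)=\sum_i p_a(C_i)+e(\Gamma)$ with $e(\Gamma)=1-v(\Gamma)+E(\Gamma)$, via the structure sequence $0\to\OO_X(-D)\to\OO_X\to\OO_D\to 0$, Serre duality and Riemann--Roch, and then reduces both implications to the elementary fact that $e(\Gamma)\ge 0$ for a connected graph with equality iff $\Gamma$ is a tree. Your ``non-tree'' direction is essentially the same computation in disguise: your estimate $h^0\ge\chi\bigl(\OO_X(K_X+\Delta_X)\bigr)=1-k+e=e(\Gamma)\ge 1$ is exactly the paper's identity weakened to an inequality, which indeed suffices there. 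Your ``tree'' direction is genuinely different: instead of reading off $h^0=e(\Gamma)=0$ from the formula, you interpret sections of $\OO_X(K_X+\Delta_X)$ as logarithmic $2$-forms and kill them by peeling leaves with the Poincaré residue sequence, using $h^0\bigl(\Omega^1_{\P^1}(\log Z)\bigr)=0$ for $\#Z\le 1$ and $h^0(K_X)=0$. This is a clean geometric alternative whose price is importing the residue machinery; the paper's route is more uniform and also covers boundary components of positive genus for free, via the $\sum_i p_a(C_i)$ term. Two small caveats: your reduction to the case where all components of $\Delta_X$ are smooth rational curves is the same unproved (but justified) step the paper takes, so no loss there; but the closing ``shortcut'' via Proposition~\ref{prop:charac-cycle} should be dropped or at least flagged, since the equivalence between tree boundaries and $\overline{\kappa}(X_0)=-\infty$ is itself derived in the paper from the present proposition (together with the zigzag fibration argument), so invoking it here risks circularity. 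Your main argument does not rely on it and stands on its own.
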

The results follow from the following lemma adapted from \cite{miyanishiNoncompleteAlgebraicSurfaces1981}. 

\begin{lemme}\label{lemme:}
  Let $X$ be a smooth projective rational surface and let $D = \sum C_i$ be a reduced effective connected divisor with simple normal crossings, then 
  \begin{equation}
    h^0 (K_X +D) = \sum_i p_a (C_i) + e(D)
    \label{}
  \end{equation}
  where $e(D) = 1 - r + \sum_{i < j} C_i \cdot C_j$. 
\end{lemme}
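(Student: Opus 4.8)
The plan is to reduce the statement to a cohomology computation on $X$ together with a purely intersection-theoretic identity; write $D = C_1 + \dots + C_r$ for the decomposition into irreducible components.

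\emph{Step 1: the combinatorial identity.} Let $p_a(C) = 1 + \tfrac12(C^2 + C\cdot K_X)$ denote the arithmetic genus, defined for any divisor by intersection theory. By bilinearity of the intersection form one checks directly that $p_a(A+B) = p_a(A) + p_a(B) + A\cdot B - 1$ for any effective $A,B$ with no common component, and an immediate induction on $r$ then yields $p_a(D) = \sum_i p_a(C_i) + \sum_{i<j} C_i\cdot C_j - (r-1) = \sum_i p_a(C_i) + e(D)$. The simple normal crossings hypothesis plays no role here; it only guarantees that the $p_a(C_i)$ are the genera of the (necessarily) components one expects.

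\emph{Step 2: identifying $h^0(K_X+D)$ with $h^1(\OO_D)$.} Consider the structure sequence $0 \to \OO_X(-D) \to \OO_X \to \OO_D \to 0$. Since $X$ is rational we have $H^1(X,\OO_X) = H^2(X,\OO_X) = 0$, so the associated long exact sequence collapses to an isomorphism $H^1(D,\OO_D) \cong H^2(X,\OO_X(-D))$. Serre duality on the smooth projective surface $X$ gives $H^2(X,\OO_X(-D)) \cong H^0\!\big(X,\OO_X(K_X+D)\big)^\vee$, hence $h^0(K_X+D) = h^1(\OO_D)$. Because $D$ is connected and reduced, $h^0(D,\OO_D)=1$; combining this with Riemann--Roch applied to the same structure sequence (which gives $\chi(\OO_D) = -\tfrac12 D\cdot(D+K_X)$) we obtain $h^1(\OO_D) = 1-\chi(\OO_D) = p_a(D)$. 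Together with Step 1 this is exactly the asserted equality.

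I do not expect a genuine obstacle, but two points deserve attention: the vanishing $H^1(\OO_X)=H^2(\OO_X)=0$ is precisely where the rationality of $X$ is used, and one must keep the bookkeeping straight so that $1-\chi(\OO_D)$ is identified with the intersection-theoretic expression for $p_a(D)$ before the inductive genus formula of Step 1 is invoked. An alternative for Step 2 that avoids Serre duality on $X$ is to use adjunction $\OO_D(K_X+D)\cong\omega_D$ together with Serre duality on the Gorenstein (in fact nodal, by the SNC assumption) curve $D$ to get $h^0(K_X+D)=h^0(D,\omega_D)=h^1(\OO_D)$; the route through $X$ is marginally cleaner since it never leaves the smooth world.
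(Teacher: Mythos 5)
Your proof is correct and follows essentially the same route as the paper: the structure sequence for $D$, the vanishing of $H^1(\OO_X)$ and $H^2(\OO_X)$ from rationality, Serre duality to identify $h^0(K_X+D)$ with $h^1(\OO_D)$, and Riemann--Roch plus adjunction for the numerical identity. The only difference is cosmetic: you package the intersection-theoretic step as an induction on $p_a(A+B)=p_a(A)+p_a(B)+A\cdot B-1$ and route through $p_a(D)$, whereas the paper expands $\tfrac12 D\cdot(K_X+D)$ directly, which amounts to the same computation.
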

\begin{proof}
  Write the exact sequence of sheaves 
  \begin{equation}
    0 \rightarrow \OO_X (-D) \rightarrow \OO_X \rightarrow \OO_D \rightarrow 0
    \label{<+label+>}
  \end{equation}
  and take the exact long sequence associated to it. Since $X$ is smooth and rational we have using Serre's duality that $H^1 (X,\OO_X) = 0$
  and $H^2 (X, \OO_X) = H^0 (X, K_X) = 0$. Thus, 
  \begin{equation}
    0 \rightarrow H^1 (D, \OO_D) \rightarrow H^2(X, \OO_X (-D)) \rightarrow 0.
    \label{<+label+>}
  \end{equation}
  Again by Serre's duality theorem we have $H^2 (X, \OO_X (-D)) = H^0 (X, K_X + D)$. Now we also have the relation
  between Euler's characteristics:
  \begin{align}
    \chi(X, \OO_X) - \chi (X, \OO_X (-D)) = \chi (D, \OO_D) &= h^0 (D, \OO_D) - h^1 (D,\OO_D) \\
    &= 1 - h^0(X, \OO_X (K_X +D)) \\
    \label{<+label+>}
  \end{align}
  Since $X$ is rational we have $\chi(X, \OO_X) = 1$, so that $h^0 (K_X +D) = \chi(X, \OO_X (-D))$.
 And by Riemann-Roch's theorem
  \begin{equation}
    \chi(X, \OO_X (-D)) = \frac{1}{2} D \cdot (K+D) +1 + p_a (X) = \frac{1}{2} D \cdot (K+D) + 1 
    \label{<+label+>}
  \end{equation}
  where $p_a (X) = 0$ is the arithmetic genus of $X$ which vanishes since $X$ is rational. Using the adjunction formula
  we get
  \begin{align}
    \frac{1}{2} D \cdot (K+D) &= \frac{1}{2} \left( \sum_{i=1}^r C_i \right) \cdot \left( K + \sum_{i=1}^r C_i \right)
    \\
    &= \frac{1}{2} \left( \sum_{i=1}^r \left( C_i^2 + K \cdot C_i \right) \right) + \frac{1}{2} \sum_{i \neq j} C_i \cdot C_j
    \\
    &= \sum_{i=1}^r p_a(C_i) -r + \sum_{i <j} C_i \cdot C_j.
    \label{<+label+>}
  \end{align}
  Putting everything together we get 
  \begin{equation}
    h^0 (K_X +D) = \sum_{i=1}^r p_a (C_i) + 1 - r + \sum_{i < j} C_i \cdot C_j = \sum_{i=1}^r p_a (C_i) +
    e(D).
    \label{<+label+>}
  \end{equation}
\end{proof}
Now in our setting if $X$ is a completion of $X_0$, then $\Delta_X$ is a reduced effective connected divisor consisting
only of smooth rational curves. So $h^0 (K_X + \Delta_X) = e (\Delta_X)$. If $\Gamma$ is the dual graph of
$\Delta_X$, then $e(D) = e (\Gamma)$ where $e(\Gamma) = 1 - v(\Gamma) + E(\Gamma)$ where $v(\Gamma)$ is the number of
vertices of $\Gamma$ and $E(\Gamma)$ is the number of edges. So the proposition follows from the following result which
is a lemma on graphs. 

\begin{lemme}\label{lemme:}
  For any connected graph $\Gamma, e(\Gamma) \geq 0$ with equality if and only if $\Gamma$ is a tree.
\end{lemme}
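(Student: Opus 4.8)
The plan is to reduce everything to the elementary fact that a finite tree on $n$ vertices has exactly $n-1$ edges. I would first establish this auxiliary fact by induction on $n$: a tree with $n \geq 2$ vertices has a leaf (a vertex of degree one), since otherwise one could walk along edges without backtracking forever and thereby find a cycle; deleting that leaf together with its unique incident edge yields a tree on $n-1$ vertices, and the induction closes. In particular a tree $\Gamma$ satisfies $E(\Gamma) = v(\Gamma)-1$, i.e. $e(\Gamma) = 0$.

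Next, for a general connected graph $\Gamma$, I would argue by induction on the number of edges $E(\Gamma)$. If $\Gamma$ contains no cycle, then it is a tree and the previous paragraph gives $e(\Gamma) = 0 \geq 0$. If $\Gamma$ contains a cycle, choose an edge $a$ lying on that cycle and set $\Gamma' = \Gamma \setminus \{a\}$. Then $\Gamma'$ is still connected (any path using $a$ can be rerouted along the remaining arc of the cycle), it has the same vertex set as $\Gamma$, and it has one fewer edge, so $e(\Gamma') = 1 - v(\Gamma') + E(\Gamma') = e(\Gamma) - 1$. By the inductive hypothesis $e(\Gamma') \geq 0$, hence $e(\Gamma) \geq 1 > 0$. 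The base case ($E(\Gamma) = 0$, a single vertex) is trivial. This argument simultaneously proves $e(\Gamma) \geq 0$ and shows that $e(\Gamma) = 0$ forces $\Gamma$ to contain no cycle, i.e. to be a tree; the converse implication is exactly the auxiliary fact.

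There is essentially no hard step: the statement is classical, asserting that $e(\Gamma)$ is the first Betti number (the number of independent cycles) of the graph. The only point that deserves a little care, and the reason I would isolate it at the start, is the claim that a tree on $n$ vertices has $n-1$ edges, which needs the leaf-existence argument; the rest is a one-line induction.
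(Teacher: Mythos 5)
Your proof is correct and follows essentially the same route as the paper: strip edges from cycles (noting connectivity is preserved and $e$ drops by $1$ each time) to reduce to the tree case, then handle trees by leaf-removal induction. Your write-up is if anything slightly more careful about organizing the induction and the ``only if'' direction, but the ideas are identical.
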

\begin{proof}
  If $\Gamma$ has a cycle, let $\Gamma '$ be the graph where
  one edge of the cycle has been removed, the graph $\Gamma '$ is still connected and $e(\Gamma) = e (\Gamma ') + 1$. So
  it suffices to show that if $\Gamma$ is a tree, then $e(\Gamma) = 0$. If $\Gamma$ has one vertex then the
  equivalence is true. Let $\Gamma$ now be a tree with at least 2 vertices, remove a leaf of the tree and its associated
  edge and call $\Gamma '$ the newly obtained tree, then $e(\Gamma) = e(\Gamma ') = 0$ by induction.
\end{proof}
\section{Log Kodaira dimension and quasi-Albanese variety}\label{sec:}

\subsection{Log Kodaira dimension}\label{subsec:}
A \emph{completion} of $X_0$ is a smooth projective surface $X$ with an open embedding $X_0 \hookrightarrow X$. The
complement $\Delta_X := X \setminus X_0$ is a divisor, i.e here a possibly reducible curve. Fix a completion $X$ of
$X_0$ such that $\Delta_X$ is with simple normal crossings. The \emph{logarithmic} Kodaira dimension of $X_0$ is defined as 
\begin{equation}
  \overline \kappa (X_0) := \kappa (X, K_X + \Delta_X)
  \label{<+label+>}
\end{equation}
where $K_X$ is the canonical divisor of $X$. It does not depend on $X$. 

A \emph{zigzag} of rational curves is a simple normal crossing divisor $E_1 + \cdots + E_r$ such that $E_i$ is a smooth
rational curve and $E_{i} \cdot E_{i+1} = 1$, we denote such a zigzag by $E_1 \vartriangleright E_2 \vartriangleright
\cdots \vartriangleright E_r$. By \cite{blancAutomorphisms1fiberedAffine2011}, if $X$ is a completion of
$X_0$ such that $\Delta_X$ is a zigzag of rational curves, we can always find another completion $Y$ of $X_0$ such that
$\Delta_Y = F \vartriangleright E \vartriangleright Z$ where $F^2 = 0, E^2 = -m \leq 0$ and $Z$ is a zigzag such that
every irreducible component has self-intersection $\leq -2$.

A \emph{cycle of rational curves} is a simple normal crossing divisor $E_1 \cdots + E_r$ such that $E_{i} \cdot
E_{i+1} = 1$ and $E_1 \cdot E_r = 1$. In \cite{abboudDynamicsEndomorphismsAffine2023}, we established the following. 

\begin{prop}\label{prop:}
  Let $X_0$ be a normal affine surface with a loxodromic automorphism $f$, then we have the following dichotomy 
  \begin{enumerate}
    \item $\lambda (f) \in \Z_{> 1}$ and there exists a completion $X$ of $X_0$ such that $\Delta_X$ is a zigzag of
      rational curves. 
    \item $\lambda (f) \not \in \Z_{>1}$, in that case it is an algebraic integer of degree 2 and there exists a
      completion $X$ such that $\Delta_X$ is a cycle of rational curves.
  \end{enumerate}
\end{prop}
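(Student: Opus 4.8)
The plan is to separate the statement into a geometric part, which settles the shape of the boundary divisor (zigzag versus cycle), and an arithmetic part, which reads off the nature of $\lambda(f)$ from that shape. The first part rests on Gizatullin's reduction theory for completions of normal affine surfaces together with the logarithmic Kodaira dimension, and the second on the action of $\Aut(X_0)$ on the combinatorial object ``at infinity''.

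For the geometric part, start with any completion $X$ with $\Delta_X$ simple normal crossing. Since $f|_{X_0}$ is an isomorphism, its birational extension to $X$ can neither contract a component of $\Delta_X$ to a point of $X_0$ nor send one onto a curve meeting $X_0$, so it maps $\Delta_X$ into $\Delta_X$. Because every component of $\Delta_X$ is a smooth rational curve, the Lemma of Section~\ref{sec:lemma-volume-form-with-logarithmic-poles} gives $h^0(K_X+\Delta_X)=e(\Delta_X)=b_1(\Gamma)$ for $\Gamma$ the dual graph of $\Delta_X$; as the left-hand side is the number of global logarithmic $2$-forms on $X_0$, the integer $b_1(\Gamma)$ is a birational invariant of $X_0$, and by Proposition~\ref{prop:boundary-tree-iff-no-log-forms} it vanishes exactly when $\Gamma$ is a tree. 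A normal affine surface carrying a loxodromic automorphism has $\overline{\kappa}(X_0)\in\{-\infty,0\}$ --- log general type forces $\Aut(X_0)$ finite, and $\overline{\kappa}(X_0)=1$ forces an $\Aut(X_0)$-invariant fibration, which carries no loxodromic automorphism --- so $b_1(\Gamma)\in\{0,1\}$: either $\Gamma$ is a tree, or $\Gamma$ is a tree with exactly one extra edge. This is the dichotomy recorded in Proposition~\ref{prop:charac-cycle}. It then remains to apply Gizatullin's elementary transformations at the boundary (inner and outer blow-ups and blow-downs) to straighten $\Gamma$ into a chain in the first case --- giving a completion with zigzag boundary --- and, in the second case, to contract the trees hanging off the unique loop of $\Gamma$ and then straighten the loop, giving a completion whose boundary is a single cycle of rational curves.

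For the arithmetic part, fix such a completion $X$ and pass to a further modification on which $f$ and $f^{-1}$ are both algebraically stable, so that $\lambda(f)$ is the spectral radius of $f^\ast$ on $\NS(X)_{\mathbf{R}}$, with nef eigenclasses $\theta^\pm$ satisfying $f^\ast\theta^\pm=\lambda^{\pm 1}\theta^\pm$, $(\theta^\pm)^2=0$, $\theta^+\cdot\theta^->0$. If $\Delta_X$ is a zigzag, the Blanc normal form $\Delta_X=F\vartriangleright E\vartriangleright Z$ with $F^2=0$ recalled above provides a $\mathbf{P}^1$-fibration on $X$ whose restriction exhibits $X_0$ as an $\mathbf{A}^1$-ruled surface; since an automorphism preserving an $\mathbf{A}^1$-ruling has dynamical degree $1$ and $\Aut(X_0)$ permutes the rulings of $X_0$, the surface $X_0$ must carry at least two of them, $\Aut(X_0)$ acts on the associated Bass--Serre tree, and $f$ acts there as a hyperbolic isometry; the dynamical degree is then the product of the local degrees collected along one period of its translation axis, each a positive integer, so $\lambda(f)\in\mathbf{Z}_{>1}$. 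If instead $\Delta_X$ is a cycle, then $\overline{\kappa}(X_0)=0$ and $K_X+\Delta_X\sim 0$, so $(X,\Delta_X)$ is a log Calabi--Yau pair; here $f$ acts on the circle at infinity $\mathcal{C}_\infty$ with north--south dynamics, and exploiting the one-dimensionality of $\mathcal{C}_\infty$ one produces an $f^\ast$-invariant rank-two integral structure on which $f^\ast$ has eigenvalues $\lambda$ and $\lambda^{-1}$; thus $\lambda$ is a root of $t^2-st+1$ with $s=\lambda+\lambda^{-1}\in\mathbf{Z}$, hence a quadratic integer, and $\lambda\notin\mathbf{Z}$, since otherwise $\lambda^{-1}=s-\lambda\in\mathbf{Z}$ would force $\lambda=1$. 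The two cases are mutually exclusive because $\overline{\kappa}(X_0)$ is well defined, equal to $-\infty$ in the first and $0$ in the second.

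The step I expect to be the main obstacle is the boundary normalisation inside Gizatullin's theory: upgrading ``$\Gamma$ is a tree'' to ``some completion has a zigzag boundary'' and ``$\Gamma$ has one loop'' to ``some completion has a cyclic boundary'', while keeping the extension of $f$ in hand throughout the sequence of elementary transformations and controlling the self-intersection sequences they produce. Closely related, and also delicate, is the construction in the cycle case of the rank-two integral structure pinning $\lambda(f)$ to degree at most $2$: the general theory of birational surface maps only guarantees that $\lambda(f)$ is a Salem number, a quadratic integer, or an integer, so it is really the one-dimensionality of $\mathcal{C}_\infty$ --- rather than the lattice structure on $\NS(X)$ alone --- that must be used to exclude the higher-degree Salem case.
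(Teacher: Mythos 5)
First, a remark on the comparison itself: the paper does not prove this proposition --- it is imported verbatim from \cite{abboudDynamicsEndomorphismsAffine2023} --- so there is no internal proof to measure your argument against. On its own terms, your architecture is the natural one and matches how the surrounding material is organised: use $h^0(K_X+\Delta_X)=e(\Gamma)$ and the bound $\overline\kappa(X_0)\le 0$ to force $b_1(\Gamma)\in\{0,1\}$, normalise the boundary accordingly, then treat the arithmetic of $\lambda(f)$ separately in each case. The difficulty is that the two steps you yourself flag as ``the main obstacle'' are not loose ends; they are essentially the whole content of the proposition, and in your write-up they are asserted rather than carried out.

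Concretely: (i) upgrading ``$\Gamma$ is a tree'' to ``some completion has zigzag boundary'' is Gizatullin's theorem and genuinely uses the loxodromic automorphism --- elementary transformations alone cannot straighten an arbitrary tree of negative curves into a chain (a star with three long branches of $(-2)$-curves is rigid under inner/outer modifications), and in the case $b_1(\Gamma)=1$ the branches hanging off the loop need not be contractible to smooth points, so ``contract the trees hanging off the unique loop'' needs an argument. (ii) In the cycle case, the ``$f^*$-invariant rank-two integral structure'' is precisely the assertion that the eigenvaluation is a monomial valuation at a satellite point on which $f$ acts through a matrix in $\GL_2(\Z)$; you do not construct it, and you also presuppose that $f$ fixes a point of $\cC_\infty$ at all, which is not automatic for a circle homeomorphism --- the existence of $v_\pm$ (Theorem \ref{thm:dynamics-loxodromic-automorphisms}) is itself a nontrivial input from the very source you are reproving. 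Two smaller gaps: the identity $h^0(K_X+\Delta_X)=b_1(\Gamma)$ requires $X$ rational \emph{and} every component of $\Delta_X$ rational (a smooth genus-one component in a tree configuration also yields $h^0=1$), and both facts must be established from the loxodromic automorphism before your dichotomy $b_1(\Gamma)\in\{0,1\}$ can be read off; and your zigzag-case claim that $\lambda(f)$ is a product of local integer degrees along a translation axis in a Bass--Serre tree is plausible but is again the statement to be proved, not a proof of it.
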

We show that the log Kodaira dimension also classifies this dichotomy. If $X_0$ admits a loxodromic automorphism $f$, then
we must have $\overline \kappa (X_0) \leq 0$ because otherwise $f$ would either preserve a fibration over a curve or be
of finite order and thus would not be loxodromic.
\begin{prop}\label{prop:}
  Let $X_0$ be a smooth affine surface over an algebraically closed field with a loxodromic automorphism. Then,
  $\overline \kappa (X_0) = 0$ if and only if  $X_0$ is completable by a cycle of rational curves and $\overline \kappa
  (X_0) = - \infty$ if and only if $X_0$ is completable by a zigzag.
\end{prop}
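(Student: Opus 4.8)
The plan is to combine Proposition~\ref{prop:charac-cycle}, which already gives the dichotomy (zigzag vs.\ cycle), with the characterisation of the cycle case via vanishing of logarithmic two-forms supplied by Proposition~\ref{prop:boundary-tree-iff-no-log-forms}. More precisely, fix a completion $X$ of $X_0$ with $\Delta_X$ a simple normal crossing divisor, so that $\overline\kappa(X_0) = \kappa(X, K_X + \Delta_X)$. Since $X_0$ carries a loxodromic automorphism, Proposition~\ref{prop:charac-cycle} tells us that $\Delta_X$ is either a tree or a cycle of rational curves, and that these two cases correspond to $\overline\kappa(X_0) = -\infty$ and $\overline\kappa(X_0) = 0$ respectively. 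So the content to supply is really just that these two characterisations of ``cycle'' match up, i.e.\ that $\overline\kappa(X_0) = 0$ (as opposed to $-\infty$) exactly when $\Delta_X$ is a cycle, and similarly $\overline\kappa(X_0) = -\infty$ exactly when $\Delta_X$ is a tree.

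The argument I would give is as follows. By Proposition~\ref{prop:boundary-tree-iff-no-log-forms}, $\Delta_X$ is a tree if and only if $h^0(K_X + \Delta_X) = 0$. Since $h^0(K_X + \Delta_X) = 0$ forces $\kappa(X, K_X + \Delta_X) = -\infty$, the tree case immediately gives $\overline\kappa(X_0) = -\infty$. Conversely, suppose $\Delta_X$ is a cycle. Then $h^0(K_X + \Delta_X) = e(\Delta_X) \geq 1$ by the same proposition (a cycle has $e = 1$ when all components are rational, since removing one edge of the unique cycle produces a tree), so $K_X + \Delta_X$ is effective and $\overline\kappa(X_0) \geq 0$. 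On the other hand, as noted in the excerpt just before Proposition~3.4, the existence of a loxodromic automorphism forces $\overline\kappa(X_0) \leq 0$: otherwise the Iitaka fibration attached to $K_X + \Delta_X$ would be $\Aut(X_0)$-invariant (up to finite index), and a loxodromic automorphism cannot preserve a fibration over a curve nor be of finite order. Hence $\overline\kappa(X_0) = 0$ in the cycle case. Combining the two directions with the dichotomy of Proposition~\ref{prop:charac-cycle} yields both equivalences.

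The only point requiring a little care is the upper bound $\overline\kappa(X_0) \leq 0$, which is stated but not proved in the excerpt; I would either cite it from \cite{abboudDynamicsEndomorphismsAffine2023} or spell out the short argument that a loxodromic automorphism of an affine surface acts with positive entropy and so cannot preserve a pencil of curves, ruling out $\overline\kappa = 1$, while $\overline\kappa = 2$ would make $\Aut(X_0)$ finite, contradicting loxodromicity. With that in hand the proof is essentially a bookkeeping matter: the two ``if and only if'' statements are logically equivalent given that the three alternatives $\{$tree$\}/\{$cycle$\}$ and $\{-\infty\}/\{0\}$ each partition the same set of surfaces, so establishing one implication in each direction suffices. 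I do not anticipate a genuine obstacle here; the substance has already been done in the preceding propositions and lemmas, and this proposition is a synthesis of them.
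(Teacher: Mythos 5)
Your argument for the cycle direction is correct and is in fact a slightly different route from the paper's: you use $h^0(K_X+\Delta_X)=\sum_i p_a(C_i)+e(\Delta_X)=1$ to conclude $\overline\kappa(X_0)\ge 0$ and combine this with the upper bound $\overline\kappa(X_0)\le 0$ coming from loxodromicity, whereas the paper obtains this case only by elimination. The genuine gap is in the other direction. You assert that ``$h^0(K_X+\Delta_X)=0$ forces $\kappa(X,K_X+\Delta_X)=-\infty$'', but this implication is false in general: the Iitaka dimension is $-\infty$ only if $h^0(n(K_X+\Delta_X))=0$ for \emph{every} $n\ge 1$, and vanishing for $n=1$ says nothing about higher multiples (a divisor class can fail to be effective while some positive multiple is effective). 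Proposition \ref{prop:boundary-tree-iff-no-log-forms} controls only $n=1$, so it cannot by itself give $\overline\kappa(X_0)=-\infty$ in the tree case. Moreover, given the two two-element partitions you invoke, the implication ``cycle $\Rightarrow\overline\kappa=0$'' is the contrapositive of ``$\overline\kappa=-\infty\Rightarrow$ tree'', so the two correct implications you establish carry the same information and only prove one half of the proposition.

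The paper closes exactly this gap with a geometric argument that you would need to supply (or cite). From a zigzag completion one may arrange $\Delta_X=F\vartriangleright E\vartriangleright Z$ with $F^2=0$; the pencil $\left|F\right|$ gives a fibration $X\to\P^1$ whose general fiber $f$ satisfies $(K_X+\Delta_X)\cdot f=-2+1=-1$ by adjunction. Since a general fiber moves and therefore cannot be contained in the support of any fixed effective divisor, no positive multiple of $K_X+\Delta_X$ can be $\Q$-linearly equivalent to an effective divisor, whence $\overline\kappa(X_0)=-\infty$. With that statement about \emph{all} multiples in hand, your bookkeeping goes through.
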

\begin{proof}
  First if $\overline \kappa (X_0) = - \infty$, then by Proposition \ref{prop:boundary-tree-iff-no-log-forms}, $X_0$ is
  completable by a tree of rational curves and therefore by a zigzag. Conversely, if $X_0$ is completable by a zigzag,
  let $X$ be a completion such that $\Delta_X$ is a zigzag. In particular we can assume that $\Delta_X = F
  \vartriangleright E \vartriangleright Z$ where $F^2 = 0, E^2 = -m$ with $m \geq 0$ and $Z$ is a negative zigzag (every
  irreducible compononent has self intersection $\leq -2$). The linear system $\left| F \right|$ yields a morphism
  $\pi_{\left| F \right|} : X \rightarrow \P^1$ with general fiber an irreducible rational curve. In particular, if $f$
  is a general fiber, then by the adjunction formula $f \cdot K_X = -2$ and $f \cdot \Delta_X = F \cdot \Delta_X = 1$
  so that 
  \begin{equation}
    (K_X + \Delta_X) \cdot f = -1.
    \label{<+label+>}
  \end{equation}
  So $K_X +\Delta_X$ cannot be $\Q$-linearly equivalent to an effective divisor because a general fiber would not be
  contained in its support and therefore $\overline \kappa(X_0) = - \infty$.
\end{proof}
\subsection{Quasi-Albanese variety and Goodman's result}\label{sec:qalb-and-goodman}
A quasi-abelian variety over an algebraically closed field is an algebraic group $Q$ such that there exists an algebraic
torus $T$ and an abelian variety $A$ such that $Q$ fits in the exact sequence 
\begin{equation}
  1 \rightarrow T \rightarrow Q \rightarrow A \rightarrow 1
  \label{<+label+>}
\end{equation}
of algebraic groups.
Let $V$ be a quasiprojective variety, there exists a unique (up to isomorphism) quasi-abelian variety $\QAlb(V)$ with a
morphism $q :V \rightarrow \QAlb(V)$ such that any morphism $V \rightarrow Q$ where $Q$ is a quasi-abelian variety factors
through $q : V \rightarrow \QAlb(V)$. It is the \emph{quasi-Albanese} variety of $V$.  See e.g.
\cite[Théorème 7]{serreExposesSeminaires195019992001}.
\begin{prop}[\cite{abboudDynamicsEndomorphismsAffine2023}]\label{prop:}
  Let $V$ be a quasiprojective surface, then $\QAlb(V) = 0$ if and only if $K[V]^{\times} = K^{\times}$ and for any
  projective compactification $X$ of $V$, $\Alb(X) = 0$.
\end{prop}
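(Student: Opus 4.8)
The plan is to derive both implications from the universal property of $\QAlb$ and the structure of semi-abelian varieties, the only substantial input being an explicit description of $\QAlb(V)$ in terms of a smooth compactification.

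For the implication $\QAlb(V)=0 \Rightarrow \bigl(K[V]^{\times}=K^{\times}$ and $\Alb(X)=0$ for every projective compactification$\bigr)$, I argue directly. Any $f\in K[V]^{\times}$ is a morphism $V\to\G_m$ into a quasi-abelian variety, hence factors through $q\colon V\to\QAlb(V)$; if the target is a point, $f$ is constant, so $K[V]^{\times}=K^{\times}$. Given a projective compactification $X$, the Albanese $\Alb(X)$ is by definition $\Alb(\tilde X)$ for a resolution $\tilde X\to X$, which we may take to be an isomorphism over the smooth locus; since $V$ is smooth it lies in that locus, so $V$ is a dense open subset of the smooth projective surface $\tilde X$. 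The Albanese morphism $a\colon\tilde X\to\Alb(\tilde X)$ restricts on $V$ to a morphism into a quasi-abelian variety, hence factors through $\QAlb(V)=\{\ast\}$ and is constant on $V$; as $V$ is dense in the variety $\tilde X$, the morphism $a$ is constant, so $\Alb(\tilde X)=0$ because the image of $a$ generates it.

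For the converse I fix a smooth projective compactification $X$ of $V$ with $D:=X\setminus V=\sum_{i=1}^{r}D_i$ a simple normal crossings divisor (Hironaka). The key point I invoke is that $\QAlb(V)$, being semi-abelian, fits in an exact sequence $1\to T\to\QAlb(V)\to\Alb(X)\to 1$ in which $T$ is a torus with character lattice $K[V]^{\times}/K^{\times}$: the abelian quotient is $\Alb(X)$ because a rational map from the smooth variety $X$ to an abelian variety is a morphism, so every morphism $V\to A$ into an abelian variety extends over $X$; and the pairing $f\mapsto f|_T$ identifies $K[V]^{\times}/K^{\times}$ with $X^{*}(T)$. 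Granting this, the hypothesis $\Alb(X)=0$, applied to this particular $X$, kills the quotient, and $K[V]^{\times}=K^{\times}$ kills $T$, so $\QAlb(V)=0$. Alternatively, and more hands-on in characteristic zero, one has $\dim\QAlb(V)=h^0(X,\Omega^1_X(\log D))$; from $\Alb(X)=0$ one gets $H^0(X,\Omega^1_X)=0$ and $\Pic^0(X)=0$, so $\Pic(X)=\NS(X)$, and taking global sections in the residue sequence $0\to\Omega^1_X\to\Omega^1_X(\log D)\to\bigoplus_i\OO_{D_i}\to 0$ shows that the residue map embeds $H^0(X,\Omega^1_X(\log D))$ into $\bigoplus_i K$ with image inside $\ker\bigl(\bigoplus_i K\to\NS(X)\otimes_{\Z}K,\ e_i\mapsto[D_i]\bigr)$, using that global logarithmic $1$-forms are closed (Deligne). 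Any $\Q$-linear relation among the classes $[D_i]$ would, after clearing denominators and using $\Pic(X)=\NS(X)$, yield a rational function on $X$ with divisor supported on $D$, i.e. an element of $K[V]^{\times}\setminus K^{\times}$; hence that kernel vanishes and $h^0(X,\Omega^1_X(\log D))=0$.

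The main obstacle is the structural input about $\QAlb$: identifying the affine part of $\QAlb(V)$ with $\mathrm{Hom}(K[V]^{\times}/K^{\times},\G_m)$ and its abelian quotient with $\Alb(X)$ — equivalently, pinning down the image of the residue map via the mixed Hodge structure on $H^1(V)$ together with the closedness of global logarithmic $1$-forms. Once this description is available, both implications are formal consequences of the universal property of $\QAlb$ and of resolution of singularities.
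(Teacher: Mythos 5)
The paper does not prove this proposition --- it is imported verbatim from \cite{abboudDynamicsEndomorphismsAffine2023} --- so there is no in-paper argument to compare against; I can only assess your proof on its own terms. Your first direction is correct and essentially forced by the universal property. In the converse direction, however, the ``key point'' you invoke is false as a general structural statement: the character lattice of the toric part $T$ of $\QAlb(V)$ is \emph{not} $K[V]^{\times}/K^{\times}$ in general, but rather the group of divisors supported on $D$ that are \emph{algebraically} trivial on $X$, which contains $K[V]^{\times}/K^{\times}$ (the \emph{linearly} trivial ones) with quotient embedding into $\Pic^0(X)$. Concretely, for $V=E\setminus\{0,p\}$ with $E$ an elliptic curve and $p$ non-torsion, one has $K[V]^{\times}=K^{\times}$ yet $\QAlb(V)$ is a nontrivial extension of $E$ by $\G_m$, so $X^{*}(T)=\Z\neq 0$. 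What is true unconditionally is only that restriction of characters gives an injection $K[V]^{\times}/K^{\times}=X^{*}(\QAlb(V))\hookrightarrow X^{*}(T)$, with cokernel controlled by the extension class in $\mathrm{Ext}^1(\Alb(X),\G_m)=\Pic^0(X)$.

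The gap is repairable, and in two ways that you essentially already have on the page. First, under the hypothesis $\Alb(X)=0$ one gets $\Pic^0(X)=0$, the obstruction above vanishes, and your identification becomes correct; your ``hands-on'' residue-sequence computation is precisely a proof of this corrected statement (over $\C$), since it shows that every algebraic relation among the $[D_i]$ lifts to a linear equivalence and hence to a unit. Second, and more cleanly, you can avoid describing $X^{*}(T)$ altogether: every morphism from $V$ to an abelian variety extends to the smooth compactification $X$ (Weil) and so factors through $\Alb(X)=0$, forcing the abelian quotient of $\QAlb(V)$ to vanish because $q(V)$ generates $\QAlb(V)$; thus $\QAlb(V)$ is a torus, whose character lattice is $K[V]^{\times}/K^{\times}=0$ by the universal property and Rosenlicht's unit theorem, whence $\QAlb(V)=0$. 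This second repair has the added advantage of working over an arbitrary algebraically closed field, whereas your Hodge-theoretic alternative ($\dim\QAlb(V)=h^0(X,\Omega^1_X(\log D))$, closedness of logarithmic forms) is confined to characteristic zero while the paper uses the proposition over arbitrary $K$.
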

We also cite the result of Goodman on existence of ample divisors supported on the boundary.
\begin{thm}[\cite{goodmanAffineOpenSubsets1969}]\label{thm:goodman}
  If $X_0$ is an affine surface, then for any projective completion $X$ of $X_0$, $\Delta_X$ is connected and it is the
  support of an ample effective Cartier divisor.
\end{thm}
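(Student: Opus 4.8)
The plan is to prove the statement first for one conveniently chosen completion, then transport it to an arbitrary smooth completion along a common resolution, and finally read off connectedness. I would begin by producing a good completion: fix a closed embedding $X_0\hookrightarrow\A^N$ and let $\overline{X_0}\subseteq\P^N$ be its closure. Then $\overline{X_0}\setminus X_0=\overline{X_0}\cap(\P^N\setminus\A^N)$ is cut out on $\overline{X_0}$ by a hyperplane, hence is the support of an ample effective Cartier divisor $A$ with $\Supp A=\overline{X_0}\setminus X_0$, which already settles the theorem for one (possibly singular) completion. To reach a smooth one, resolve $\pi\colon Y\to\overline{X_0}$ with $Y$ smooth projective; since $X_0$ is smooth this is an isomorphism over $X_0$, so $Y$ is again a completion of $X_0$ and every $\pi$-exceptional curve lies in $\Delta_Y$, with negative-definite intersection matrix $M$. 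Now $\pi^{*}A$ is effective, nef, big (self-intersection $A^{2}>0$) and, because $\Supp A$ is the whole boundary, has strictly positive coefficient along every component of $\Delta_Y$. Choosing an effective $\pi$-exceptional $\Q$-divisor $F$ with $-F$ relatively $\pi$-ample (it exists: $F$ with coefficient vector $(-M)^{-1}\mathbf{1}$ works, since $-M$ is a Stieltjes matrix with entrywise nonnegative inverse), a standard perturbation via Nakai--Moishezon makes $\pi^{*}A-tF$ ample for all small rational $t>0$, and for $t$ small it is still effective with support all of $\Delta_Y$; clearing denominators, $\Delta_Y$ is the support of an ample effective divisor $A_Y$ on the smooth surface $Y$.

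Next I would propagate to an arbitrary smooth projective completion $X$. The birational correspondence between $Y$ and $X$ is an isomorphism over $X_0$, so by elimination of indeterminacy for surfaces there is a smooth projective surface $W$ with birational morphisms $p\colon W\to Y$ and $q\colon W\to X$, each a composition of point blow-ups and an isomorphism over $X_0$; all exceptional curves lie in $\Delta_W$. Repeating the pullback-and-perturb step with $A_Y$ and $p$ gives an ample effective $D_W$ on $W$ with support $\Delta_W$ and positive coefficient along every boundary component. Put $E:=q_{*}D_W$; it is effective, and since $D_W$ is positive along the strict transform of each component of $\Delta_X$, its support is all of $\Delta_X$. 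The key point is that $E$ is ample. First, by the projection formula $E\cdot C'=D_W\cdot q^{*}C'=D_W\cdot\widetilde{C'}+\sum_k m_k(D_W\cdot G_k)>0$ for every irreducible curve $C'\subseteq X$, where $G_1,\dots,G_\ell$ are the $q$-exceptional curves. Writing $q^{*}E=D_W-\sum_k e_kG_k$ and intersecting with each $G_j$ gives $\sum_k e_k(G_k\cdot G_j)=-D_W\cdot G_j<0$; as $(G_k\cdot G_j)$ is negative definite with nonnegative off-diagonal entries, its negative is a Stieltjes matrix with entrywise nonnegative inverse and positive diagonal, so every $e_k>0$. Hence $q^{*}E=D_W+\sum_k e_kG_k$ is a sum of an ample and an effective divisor, so it is big, so $E$ is big. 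Finally, in the Zariski decomposition $E=P+N$ one must have $N=0$: otherwise a component $C$ of $\Supp N$ would satisfy $E\cdot C=N\cdot C>0$ (since $P\cdot C=0$), giving $N^{2}>0$ and contradicting the negative-definiteness of $\Supp N$. So $E=P$ is nef and big and positive on every curve, hence ample by Nakai--Moishezon, and Cartier because $X$ is smooth. (A singular completion can be handled by running the argument on a resolution; making the output Cartier on $X$ is the extra point already treated in Goodman's original proof.)

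Connectedness of $\Delta_X$ is then automatic. Taking $D\ge 0$ with $\Supp D=\Delta_X$ and $D$ ample, Serre vanishing gives $H^{1}(X,\OO_X(-nD))\cong H^{1}(X,\omega_X\otimes\OO_X(nD))^{\vee}=0$ for $n\gg 0$, so $H^{0}(X,\OO_X)\to H^{0}(nD,\OO_{nD})$ is an isomorphism (injective since $-nD$ is not effective), forcing $\OO_{nD}$ to have a one-dimensional space of global sections, which is impossible if $\Supp(nD)=\Delta_X$ is disconnected.

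The step I expect to be the main obstacle is the propagation, and inside it the fact that $E=q_{*}D_W$ is still ample: pushforward destroys ampleness in general, so one is forced into the combination above -- the positivity properties of Stieltjes matrices to get $q^{*}E\ge D_W$ and hence bigness, then the Zariski decomposition to upgrade ``big and positive on every curve'' to ``nef, hence ample'' -- while also keeping track of coefficients so that the support stays equal to all of $\Delta_X$. The perturbation lemma turning a nef-and-big pullback into an ample divisor is standard, but likewise rests on the negative-definiteness of the exceptional locus.
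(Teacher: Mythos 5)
This statement is not proved in the paper at all: it is quoted verbatim from Goodman's 1969 paper and used as a black box, so there is no internal proof to compare against. Your argument is a genuine, essentially complete proof of the statement in the form the paper needs it (smooth completions of a smooth irreducible affine surface), and the overall architecture --- hyperplane section on the projective closure, transport along a common resolution using the entrywise nonnegativity of $(-M)^{-1}$ for negative definite exceptional configurations, then bigness plus strict positivity on curves upgraded to ampleness via Zariski decomposition and Nakai--Moishezon, and finally connectedness from $h^0(\OO_{nD})=1$ --- is sound. Two small repairs are worth making. First, there is a sign slip: you write $q^{*}E=D_W-\sum_k e_kG_k$, but the displayed relation $\sum_k e_k(G_k\cdot G_j)=-D_W\cdot G_j$ and your own conclusion $q^{*}E=D_W+\sum_k e_kG_k$ both correspond to the convention $q^{*}E=D_W+\sum_k e_kG_k$; with the minus sign the equation would read $\sum_k e_k(G_k\cdot G_j)=+D_W\cdot G_j$ and the bigness step would fail, so the plus convention should be used throughout. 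Second, the closure $\overline{X_0}\subseteq\P^N$ need not be normal, and Mumford's negative-definiteness of the contracted configuration is stated for resolutions of normal surfaces; this is harmless --- factor $Y\to\overline{X_0}$ through the normalization, which is finite and an isomorphism over the smooth locus $X_0$, note that the pullback of the ample hyperplane section under a finite morphism stays ample with support still equal to the boundary, and apply Mumford to $Y$ over the normalization --- but the sentence should be there. With those two adjustments the proof is correct; it is of course less general than Goodman's original theorem (which covers arbitrary completions of affine opens in any dimension, at the price of allowing a blow-up at infinity), but that generality is not used anywhere in the paper.
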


For a completion $X$ of $X_0$ we define $\DivInf (X) = \bigoplus \R E_i$ where $\Delta_X = \bigcup_i E_i$. We have a
natural group homorphism $\DivInf(X) \rightarrow \NS(X)_\R$ into the Néron-Severi group of $X$.
\begin{cor}\label{cor:non-degenerate-intersection-product}
  Suppose $\QAlb(X_0) = 0$, then for any projective completion $X$ of $X_0, \DivInf(X) \hookrightarrow \NS(X)_\R$ is
  injective and the intersection form is non-degenerate and of Minkowski's type over $\DivInf(X)$.
\end{cor}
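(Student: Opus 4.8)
The plan is to deduce everything from the previous results via the exponential (or Kummer) exact sequence and Goodman's theorem. First I would observe that since $\QAlb(X_0) = 0$, Proposition \ref{prop:} (on quasi-Albanese) gives $K[X_0]^\times = K^\times$ and $\Alb(X) = 0$ for every projective compactification $X$. The vanishing of $\Alb(X)$ means $H^1(X,\OO_X) = 0$ (we are over a surface, and for rational surfaces this is automatic, but the point is that it holds for any $X$ with $\Alb(X)=0$), so $\Pic^0(X) = 0$ and $\NS(X)_\R = H^{1,1}(X)_\R$ carries the intersection form, which by the Hodge index theorem has signature $(1, \rho-1)$, i.e. is of Minkowski type on all of $\NS(X)_\R$.

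Next I would prove injectivity of $\DivInf(X) \hookrightarrow \NS(X)_\R$. Suppose $\sum a_i E_i$ maps to $0$ in $\NS(X)_\R$; after clearing denominators and separating signs we may assume $\sum a_i E_i$ is $\Q$-linearly equivalent to $0$ with the $E_i$ appearing with nonzero rational coefficients, or more precisely that there are two effective divisors $D_1, D_2$ supported on $\Delta_X$ with $D_1 \equiv D_2$ (numerically, hence linearly since $\Pic^0 = 0$) and disjoint supports is not forced, so instead: a nonzero element of the kernel produces a nonconstant rational function $\varphi$ on $X$ whose divisor is supported on $\Delta_X$. Then $\varphi$ is a regular invertible function on $X_0 = X \setminus \Delta_X$, contradicting $K[X_0]^\times = K^\times$. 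Hence the kernel is trivial and $\DivInf(X) \hookrightarrow \NS(X)_\R$.

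Finally, for non-degeneracy: since the intersection form on $\NS(X)_\R$ is non-degenerate, the only way the restriction to the subspace $\DivInf(X)$ could be degenerate is if $\DivInf(X)$ contains a nonzero vector $v$ orthogonal to all of $\DivInf(X)$. By Goodman's theorem (Theorem \ref{thm:goodman}) there is an effective Cartier divisor $A$ with $\Supp(A) = \Delta_X$ which is ample; write $A = \sum m_i E_i$ with all $m_i > 0$, so $A \in \DivInf(X)$. If $v = \sum a_i E_i$ is in the radical of the form on $\DivInf(X)$ then $v \cdot A = 0$. But $A$ is ample and $v$ is a nonzero divisor supported on a curve, so $v \cdot A = 0$ forces $v = 0$ by the Hodge index / Nakai–Moishezon argument: indeed $v^2 = v \cdot v$, and pairing against $A$ shows $v$ is in $A^\perp$ where the form is negative definite (Hodge index), so $v^2 < 0$ unless $v = 0$; but $v$ is in the radical so $v^2 = 0$, whence $v = 0$. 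This proves the form is non-degenerate on $\DivInf(X)$, and being a non-degenerate restriction of a form of signature $(1,\ast)$ to a subspace containing the ample class $A$ (with $A^2 > 0$), it has exactly one positive eigenvalue, i.e. it is of Minkowski type. The main obstacle is being careful about the characteristic-$p$ subtleties in passing from $\Alb(X) = 0$ to $\Pic^0(X) = 0$ and to the statement that numerical and linear equivalence agree on $\Delta_X$-supported divisors; over an algebraically closed field this follows because $X$ is rational (as noted in the excerpt, $X_0$ with a loxodromic automorphism is rational), so $\Pic(X) = \NS(X)$ is finitely generated and torsion-free after tensoring with $\R$, and the whole argument is purely about the lattice $\NS(X)$ with its intersection form.
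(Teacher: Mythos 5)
Your proposal is correct and follows essentially the same route as the paper: injectivity of $\DivInf(X)\to\NS(X)_\R$ from $K[X_0]^\times=K^\times$ and $\Alb(X)=0$, and non-degeneracy (and the Minkowski signature) from Goodman's ample divisor supported on $\Delta_X$ combined with the Hodge index theorem. The only difference is that you spell out the signature count and the passage from numerical to linear equivalence more explicitly than the paper does.
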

\begin{proof}
  We have $\DivInf(X) \rightarrow \Pic(X)_\R \rightarrow \NS(X)_\R$, the first map is injective because $X_0$ contains
  no nonconstant invertible regular functions and the second one is an isomorphism because $\Alb(X) = 0$.

  Now, the fact that the intersection form is non-degenerate on $\DivInf(X)$ comes from Goodman's result and the Hodge
  index theorem. Indeed, suppose that $D \in \DivInf(X) \cap \DivInf(X)^\perp$ and $D \neq 0$, then there exists $H \in
  \DivInf(X)$ ample by Goodman's result but then by the Hodge index theorem we have $H \cdot D \Rightarrow D^2 < 0$
  which is a contradiction.
\end{proof}

Finally we state one more result from \cite{abboudDynamicsEndomorphismsAffine2023} which characterises the algebraic
torus.
\begin{prop}\label{prop:charac-alg-torus}
  Let $X_0$ be a normal affine surface with a loxodromic automorphism, the following are equivalent. 
  \begin{enumerate}
    \item $X_0 = \G_m^2$. 
    \item $\QAlb (X_0) \neq 0$.
    \item $X_0$ admits a non constant invertible regular function.
  \end{enumerate}
\end{prop}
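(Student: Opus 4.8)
The plan is to reduce everything to the implication $(3)\Rightarrow(1)$. The implication $(1)\Rightarrow(3)$ is immediate, the coordinate functions on $\G_m^2$ being nonconstant units, and $(2)\Leftrightarrow(3)$ follows from the preceding proposition: a surface carrying a loxodromic automorphism is rational, so $\Alb(X)=0$ for every projective completion $X$, and hence $\QAlb(X_0)=0$ if and only if $K[X_0]^\times=K^\times$. So assume there is a nonconstant unit and set $U:=K[X_0]^\times/K^\times$. This is a free abelian group: for any completion $X$ the map $u\mapsto\div(u)$ embeds $U$ into $\DivInf(X)$, since a unit with trivial divisor is a global regular function on $X$, hence constant; let $z=\operatorname{rank}U\geq 1$. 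Because $\Alb(X)=0$, the quasi-Albanese variety $\QAlb(X_0)$ is an algebraic torus $T$ with character lattice $U$, so $\dim T=z$; let $q\colon X_0\to T=\G_m^z$ be the quasi-Albanese morphism. By functoriality $\Aut(X_0)$ acts on $T$ compatibly with $q$, hence on $V:=\overline{q(X_0)}$, which is not contained in a translate of a proper subtorus.

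The next step is to pin down $z=2$. First $z\neq 1$: if $z=1$ then $q=u\colon X_0\to\G_m$ and $f^*$ acts on $U\cong\Z$ by $\pm1$, so $(f^2)^*u\in K^\times u$ and $f^2$ preserves the fibration defined by $u$; but an automorphism of a surface preserving a fibration over a curve has dynamical degree $1$, contradicting that $f$ is loxodromic. Next $z\leq 2$: if $z\geq 3$ then $\dim V\in\{1,2\}$ is $<z$, so $V$ is not a coset of a subtorus of its own dimension, and Kawamata's structure theorem for subvarieties of semiabelian varieties gives $\overline\kappa(V)=\dim V-\dim B(V)\geq 1$. If $\dim V=2$ then, a generically finite dominant morphism not decreasing the log Kodaira dimension, $\overline\kappa(X_0)\geq\overline\kappa(V)\geq 1$, contradicting $\overline\kappa(X_0)\leq 0$; if $\dim V=1$ then $V$ is an affine curve of log-general type, hence has finite automorphism group, so a high iterate of $f$ acts trivially on $V$ and therefore preserves the fibration $q\colon X_0\to V$, again impossible. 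The same argument excludes $\dim V=1$ when $z=2$: a generating affine curve with $\overline\kappa\leq 0$ would be $\A^1$, which has no nonconstant unit, or a one-dimensional subtorus coset, which does not generate. Hence $z=2$, $\dim V=2$ so $V=\G_m^2$, and $q\colon X_0\to\G_m^2$ is dominant and generically finite.

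Granting that $q$ is \emph{finite} (the one nontrivial input, discussed below), the proof concludes as follows. For each $g\in\Aut(X_0)$ one has $q\circ g=\bar g\circ q$, where $\bar g=M_{\phi_g}\circ\tau_g$ is the automorphism of $\G_m^2$ built from the action $\phi_g\in\GL_2(\Z)$ of $g$ on $U$ and a translation $\tau_g$. Since $q$ is finite, $\lambda(f)$ equals the dynamical degree of $\bar f$, which is the spectral radius of $\phi_f$; as $\lambda(f)>1$ this forces $\phi_f$ to be hyperbolic, and a hyperbolic monomial-plus-translation map of $\G_m^2$ has no invariant curve (its action on Newton polytopes would send a non-point polytope to a translate of itself, impossible for a hyperbolic matrix). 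Comparing ramification divisors on the two sides of $q\circ f=\bar f\circ q$ gives $f^*R_0=R_0$ for the ramification divisor $R_0$ of $q$; then $q(R_0)$, if nonzero, would be a $\bar f$-invariant curve in $\G_m^2$, so $R_0=0$. Thus $q\colon X_0\to\G_m^2$ is finite and unramified in codimension one between normal varieties with regular target, hence étale by Zariski--Nagata purity; since every connected finite étale cover of $\G_m^2$ is again a two-dimensional torus, $X_0\cong\G_m^2$. For a general algebraically closed field of characteristic zero one first reduces to $K=\C$; the facts used about dynamical degrees of finite factors and of monomial maps are algebraic in any case.

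The main obstacle is the finiteness of $q=(u_1,u_2)\colon X_0\to\G_m^2$, equivalently its properness, an affine morphism being finite precisely when proper. Properness means that no valuation centered at infinity of $X_0$ restricts to a valuation of $K(\G_m^2)$ having a center in $\G_m^2$; concretely, that for every completion $X$ of $X_0$ each component of $\Delta_X$ lies in the support of $\div(u_1)$ or of $\div(u_2)$, and that $q$ contracts no curve of $X_0$ (a contracted curve would be a union of the finitely many positive-dimensional fibres, which are permuted by $\Aut(X_0)$ and hence would produce an $f$-periodic curve in $X_0$, excluded for a loxodromic automorphism). This is exactly where the description from \cite{abboudDynamicsEndomorphismsAffine2023} of the space of valuations centered at infinity, together with the absence of periodic curves for loxodromic automorphisms, must be brought in; equivalently it encodes the classification of the anticanonical (Looijenga) pairs $(X,\Delta_X)$ whose boundary cycle has intersection matrix of corank at least two, the algebraic torus being the only one carrying a loxodromic automorphism.
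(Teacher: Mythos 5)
The paper does not actually prove this proposition: it is imported verbatim from \cite{abboudDynamicsEndomorphismsAffine2023}, so there is no internal argument to measure yours against. Judged on its own, your quasi-Albanese strategy is reasonable and most of the reductions are correct: the equivalence of (2) and (3) via rationality of $X_0$, the exclusion of unit rank $1$ and rank $\geq 3$, the hyperbolicity of $\phi_f$ on the unit lattice, the Newton-polytope argument ruling out $\bar f$-invariant curves, and the purity argument once $q$ is known to be finite and unramified in codimension one. But the proof has a genuine hole exactly where you flag ``the one nontrivial input'': the finiteness (equivalently, properness) of $q=(u_1,u_2)\colon X_0\to\G_m^2$. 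Your last paragraph correctly translates properness into a valuative condition --- every divisorial valuation at infinity must be nonzero on some unit, and $q$ must contract no curve --- but then only asserts that the machinery of \cite{abboudDynamicsEndomorphismsAffine2023} ``must be brought in''. That condition is precisely where the content of the proposition lives (nothing said so far prevents a boundary component $E$ with $\ord_E(u_1)=\ord_E(u_2)=0$, in which case $q_*\ord_E$ is centered inside the torus and $q$ is not proper), and no argument is supplied for it. Until you prove that every component of $\Delta_X$ lies in the support of $\div(u_1)$ or $\div(u_2)$ --- say by playing the hyperbolic action of $f^*$ on $U\otimes\R$ against the eigenclasses $\theta^\pm$, or via the circle of valuations at infinity --- everything from ``Granting that $q$ is finite'' onward is conditional.

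A secondary defect: the proposition is stated over an arbitrary algebraically closed field, while several of your ingredients are characteristic-zero facts --- Kawamata's structure theorem for subvarieties of semiabelian varieties, the reduction to $K=\C$, and above all the claim that every connected finite étale cover of $\G_m^2$ is again a torus. In characteristic $p$ the torus admits wild Artin--Schreier covers, so the final step does not go through as written and would need a separate argument (or at least a tameness claim) in positive characteristic.
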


\section{Valuations and cyclic completions}\label{sec:valuations-cyclic-completions}
\subsection{Picard-Manin spaces}\label{subsec:picard-manin-spaces}
Let $X$ be a smooth projective surface. We define the space of Cartier classes over $X$ as 
\begin{equation}
  \cNS (X) = \varinjlim_{Y \rightarrow X} \NS (Y)_\R
  \label{<+label+>}
\end{equation}
where the limit is over all smooth projective surfaces $Y$ with a birational morphism $Y \rightarrow X$. That is an
element of $\cNS(X)$ is a couple $(Y, D)$ where $D$ is an $\R$-Cartier divisor over $Y$ and two couples are $(Y,D)$ and
$(Y', D')$ are equivalent if there exists a birational model $Z$ dominating both $Y$ and $Y'$ such that $\pi^* D =
(\pi')^* D'$ where $\pi : Z \rightarrow Y$ and $\pi ' : Z \rightarrow Y'$ are the two birational morphisms. We also
define the space of Weil classes by 
\begin{equation}
  \wNS (X) = \varprojlim_{Y \rightarrow X} \NS(Y)_\R.
  \label{<+label+>}
\end{equation}
Here the compatibility morphisms are given by pushforward. An element of $\wNS(X)$ is the data of $\alpha =
(\alpha_Y)_Y$  where $\alpha_Y \in \NS(Y)_\R$ and such that for every birational morphism $\pi : Z \rightarrow Y$ we
have 
\begin{equation}
  \pi_* \alpha_Z = \alpha_Y.
  \label{<+label+>}
\end{equation}
We have a natural embedding $\cNS(X) \hookrightarrow \wNS(X)$ and a pairing $\cNS (X) \times \wNS(X) \rightarrow \R$
defined as follows: If $\alpha = (Y, \alpha_Y) \in \cNS(X)$ is defined over a model $Y$ and $\beta \in \wNS(X)$, then we set 
\begin{equation}
  \alpha \cdot \beta := \alpha_Y \cdot \beta_Y.
  \label{<+label+>}
\end{equation}
It does not depend on the choice of the model $Y$ where $\alpha$ is defined. This pairing is perfect and realises
$\cNS(X)$ as the dual of $\wNS(X)$ for the topology of the inverse limit (see
\cite{boucksomDegreeGrowthMeromorphic2008}).

\begin{dfn}\label{dfn:weil-class-curve}
  If $C$ is a curve in some birational model $Y$
above $X$. We define its associated Weil class $W_C$ which is defined as follows: for every birational model $Z$ over
$X$, let $\pi: Z \dashrightarrow Y$ be the induced birational map, then the incarnation $W_{C, Z}$ is $\pi ' (C) =
\overline{\pi(C \setminus \Ind(\pi^{-1})}$ the strict transform of $C$. In particular, it is zero if $\pi^{-1}$
  contracts $C$ to a point.
\end{dfn}

Let $f :X \dashrightarrow X'$ be a dominant rational map, it defines a natural pullback operator $f^* : \cNS (X')_\A
\rightarrow \cNS(X)_\A$ and a natural pushforward operator $f_* : \wNS(X)_\A \rightarrow \wNS (X')$. Indeed, if
$D' \in \cNS(X')_\A$ is a Cartier class living in some birational model $Y'$ then there exists a birational model $Y$ of
$X$ such that $f$ lift to a regular morphism $F : Y \rightarrow Y'$ and we define 
\begin{equation}
  f^* D' := F^* (D_{Y'}')
  \label{<+label+>}
\end{equation}
where $D_{Y'}'$ is the pullback of $D'$ in $Y'$. It does not depend on the choice of $Y, Y'$. If $\alpha \in
\wNS(X)_\A$ is a Weil class, then for any birational model $Y'$ over $X$ we can find a birational model $Y$ of $X$ such
that $f$ lifts to a regular morphism $F : Y \rightarrow Y'$, we then define 
\begin{equation}
  (f_* \alpha)_{Y'} := F_* \alpha_Y.
  \label{<+label+>}
\end{equation}

\begin{prop}[\cite{boucksomDegreeGrowthMeromorphic2008}]\label{prop:operators}
  If $f : X \dashrightarrow X'$ is a dominant rational map, then the pushforward operator $f_* : \wNS (X) \rightarrow
  \wNS (X')$ restricts to an operator 
  \begin{equation}
    f_* : \cNS (X) \rightarrow \cNS(X')
    \label{<+label+>}
  \end{equation}
  and the pullback operator extends naturally to a continuous linear map 
  \begin{equation}
    f^* : \wNS (X') \rightarrow \wNS(X).
    \label{<+label+>}
  \end{equation}
  Furthermore, we have the following property. If $Y,Y'$ are birational models of $X$ and $X'$ respectively such that
  the lift $f : Y \dashrightarrow Y'$ does not contract any curves, then for any $\beta \in \wNS (X')$ we have
  \begin{equation}
    (f^* \beta)_Y = (f^* \beta_{Y'})_Y.
    \label{<+label+>}
  \end{equation}
\end{prop}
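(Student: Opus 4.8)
The plan is to reduce all three assertions to bookkeeping on the inverse system of models, the only genuine geometric inputs being that a dominant rational map of smooth projective surfaces becomes a morphism after finitely many blow-ups of the source, the projection formula $G^{*}\gamma\cdot\delta=\gamma\cdot G_{*}\delta$ for a generically finite morphism $G$ of smooth projective surfaces, the fact that a finite morphism sends Cartier classes to Cartier classes (via the norm of a line bundle), and the negative-definiteness of the exceptional lattice of a birational morphism.

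For the first assertion I would fix $\alpha\in\cNS(X)$, incarnated on a model $Y$ over $X$, and choose models $\hat Y$ over $X$ (with $\hat Y$ dominating $Y$) and $Y'$ over $X'$ so that $f$ lifts to a \emph{finite} morphism $F\colon\hat Y\to Y'$ — that such models exist, i.e.\ that a generically finite rational map of surfaces becomes finite after blowing up source and target, is the technical fact I expect to be the main obstacle. Granting it, $F_{*}\alpha_{\hat Y}$ is a Cartier class on $Y'$, and the claim is $f_{*}\alpha=[Y',F_{*}\alpha_{\hat Y}]$. To verify this one evaluates the Weil class $f_{*}\alpha$ on an arbitrary model $Z'$ dominating $Y'$ (these being cofinal): picking $Z$ over $X$ dominating $\hat Y$ with a lift $G\colon Z\to Z'$ of $f$ gives a commuting square, $\tau\colon Z\to\hat Y$ and $\rho\colon Z'\to Y'$ birational with $\rho G=F\tau$, and $(f_{*}\alpha)_{Z'}=G_{*}\tau^{*}\alpha_{\hat Y}$. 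The difference $G_{*}\tau^{*}\alpha_{\hat Y}-\rho^{*}F_{*}\alpha_{\hat Y}$ lies in the $\rho$-exceptional lattice; pairing it with a $\rho$-exceptional curve $E$ and applying the projection formula for $G$ and then for $\tau$ rewrites it as $\alpha_{\hat Y}\cdot\tau_{*}G^{*}E$, and since $\tau(\Supp G^{*}E)\subseteq F^{-1}(\rho(E))$, which is finite because $F$ is finite, $G^{*}E$ is $\tau$-exceptional, so $\tau_{*}G^{*}E=0$. As a $\rho$-exceptional class orthogonal to every $\rho$-exceptional curve vanishes, the difference is $0$, and hence $f_{*}\alpha=[Y',F_{*}\alpha_{\hat Y}]\in\cNS(X')$.

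For the second assertion, the perfect pairing $\cNS(Z)\times\wNS(Z)\to\R$ realises $\cNS(Z)$ as the continuous dual of $\wNS(Z)$ for the inverse-limit topology, so I would define $f^{*}\colon\wNS(X')\to\wNS(X)$ to be the transpose of the operator $f_{*}\colon\cNS(X)\to\cNS(X')$ just produced, i.e.\ by $\langle f^{*}\beta,\alpha\rangle_{X}=\langle\beta,f_{*}\alpha\rangle_{X'}$ for $\beta\in\wNS(X')$, $\alpha\in\cNS(X)$; this is automatically $\R$-linear and continuous, and it restricts on $\cNS(X')\subseteq\wNS(X')$ to the pullback already defined, because for Cartier classes $\langle f^{*}\alpha',\alpha\rangle_{X}=\langle\alpha',f_{*}\alpha\rangle_{X'}$ is the projection formula evaluated on a model where $f$ is a morphism. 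Finally, for the last assertion, take $Y,Y'$ with the lift $f\colon Y\dashrightarrow Y'$ contracting no curve and resolve it by $\pi\colon\tilde Y\to Y$ and $\tilde F\colon\tilde Y\to Y'$, the hypothesis forcing $\tilde F$ to contract only $\pi$-exceptional curves. For $D\in\NS(Y)$ one finds on one side, using $[Y,D]=[\tilde Y,\pi^{*}D]$ and the first assertion to compute $f_{*}[Y,D]$ on $Y'$, that $\langle(f^{*}\beta)_{Y},D\rangle_{Y}=\langle\beta,f_{*}[Y,D]\rangle_{X'}=\langle\beta_{Y'},\tilde F_{*}\pi^{*}D\rangle_{Y'}$; and on the other side, since $f^{*}\beta_{Y'}=[\tilde Y,\tilde F^{*}\beta_{Y'}]$ has incarnation $\pi_{*}\tilde F^{*}\beta_{Y'}$ on $Y$, that $\langle(f^{*}\beta_{Y'})_{Y},D\rangle_{Y}=\langle\tilde F^{*}\beta_{Y'},\pi^{*}D\rangle_{\tilde Y}=\langle\beta_{Y'},\tilde F_{*}\pi^{*}D\rangle_{Y'}$ by the projection formula for $\tilde F$. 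Since these coincide for every $D$, the incarnations $(f^{*}\beta)_{Y}$ and $(f^{*}\beta_{Y'})_{Y}$ agree; the role of the no-contracted-curves hypothesis is precisely to make these two computations land on the same incarnation — already for a Cremona transformation, the Weil-pullback $f^{*}\beta$ and the Cartier class $f^{*}\beta_{Y'}$ have genuinely different incarnations on $Y$ when the lift collapses curves.
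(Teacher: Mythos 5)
The paper does not prove this proposition at all --- it simply cites Corollaries 2.5 and 2.6 of Boucksom--Favre--Jonsson --- so the only question is whether your argument stands on its own. Parts two and three do: defining $f^*$ on $\wNS(X')$ as the transpose of $f_*\colon \cNS(X)\to\cNS(X')$ under the perfect pairing, and the projection-formula computation showing $(f^*\beta)_Y=(f^*\beta_{Y'})_Y$ when the lift contracts no curves, are correct and are essentially the standard route. The problem is part one. Your entire construction of $f_*\alpha$ as a Cartier class rests on the claim that one can choose models $\hat Y$ and $Y'$ on which $f$ lifts to a \emph{finite} morphism, and you explicitly leave this unproven. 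That is a genuine gap, not a routine technicality: a morphism of projective surfaces is finite iff it is everywhere defined \emph{and} contracts no curve, and the obvious procedure for arranging both at once (blow up $Y'$ so contracted curves become dominant over divisors, then blow up $\hat Y$ to kill the indeterminacy this creates, which produces new exceptional curves that may again be contracted, \dots) has no evident termination. I do not know a reference for this finiteness statement even for surfaces, and it is certainly not what BFJ use.

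The fix is that you only need the much weaker statement that the lift can be chosen to \emph{contract no curves} --- exactly the hypothesis appearing in the third assertion, which should have been the hint. This weaker statement is easy: resolve $f|_Y$ by $\tilde Y\to Y$ with a morphism $\tilde F\colon\tilde Y\to X'$; the finitely many curves $D_1,\dots,D_k$ contracted by $\tilde F$ give divisorial valuations $\ord_{D_j}$ on $K(X)$ whose restrictions to the finite-index subfield $K(X')$ are again divisorial (value group $\Z$ and residue transcendence degree $1$ are preserved under finite extensions), so after finitely many blow-ups one reaches a model $Y'_0$ of $X'$ on which each of these valuations is monomialised by a prime divisor; the lift $Y\dashrightarrow Y'_0$ then contracts no curve, and raising $Y'_0$ further cannot create new contracted curves. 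With this in hand your own computation goes through verbatim: for $Z\to\hat Y$ and $Z'\to Y'_0$ with a morphism lift $G$ and a $\rho$-exceptional curve $E\subset Z'$, every component of $G^*E$ is contracted by $Z\to Y'_0$, hence is either $\tau$-exceptional or the strict transform of a curve of $Y$ contracted by $Y\dashrightarrow Y'_0$ --- and there are none of the latter --- so $\tau_*G^*E=0$ and the difference of the two incarnations is a $\rho$-exceptional class orthogonal to the exceptional lattice, hence zero. The point where you invoke ``$F^{-1}(\rho(E))$ is finite'' is precisely where the no-contracted-curves hypothesis should be substituted; nothing else in your argument changes.
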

\begin{proof}
  This is contained in Corollary 2.5 and 2.6 of \cite{boucksomDegreeGrowthMeromorphic2008}.
\end{proof}

\subsection{Picard-Manin spaces at infinity}\label{subsec:picard-manin-infinity}
Let $X_0$ be a smooth affine surface, the space of Cartier and Weil classes of $X_0$ are defined as $\cNS(X)_\A$ and
$\wNS(X)_\A$ for any completion $X$ of $X_0$. Since they are birational invariants they do not depend on the choice of
the completion $X$. Now, we suppose that $\QAlb(X_0) = 0$. This implies in particular that for any completion $X$ of
$X_0$ we have an embedding 
\begin{equation}
  \DivInf(X)_\A \hookrightarrow \NS (X)_\A
  \label{<+label+>}
\end{equation}
where $\DivInf(X)_\A = \bigoplus_{i} \A E_i$ and $E_1, \dots, E_r$ are the irreducible components of $X \setminus X_0$.
We write $\ord_{E_i} (D)$ for the coefficients of $D$ along $E_i$.
If $\pi : Y \rightarrow X$ is a morphism of completions of $X_0$, then the pushforward and pullback operators on the
Néron-Severi group induce operators 
\begin{equation}
  \pi^* : \DivInf(X)_\A \hookrightarrow \DivInf(Y)_\A, \quad \pi_*: \DivInf(Y)_\A \rightarrow \DivInf(X)_\A.
  \label{<+label+>}
\end{equation}
We thus define 
\begin{equation}
  \Cinf_\R = \varinjlim_{X} \DivInf(X)_\R, \quad \Winf_\R = \varprojlim_X \DivInf(X)_\R.
  \label{<+label+>}
\end{equation}

We define $\sD_\infty (X_0)$ as the equivalence class of prime divisors at infinity of $X_0$. That is an element of
$\sD_\infty (X_0)$ is a couple $(X, E)$ where $X$ is a completion of $X_0$ and $E \subset \Delta_X$ is a prime divisor at
infinity. Two couples $(X,E), (X', E')$ are equivalent if the induced birational map $X \dashrightarrow X'$ sends $E$ to
$E'$ birationally.
If $E \in \sD_\infty(X_0)$ and $W \in \Winf(X_0)$ we define $\ord_E (W)$ as $\ord_E (W_Y)$ where $Y$ is any completion
of $X_0$ such that $E \subset \Delta_Y$ is a prime divisor at infinty.
\begin{prop}\label{prop:embeddings}
  We have two canonical embeddings $\Cinf_\R \hookrightarrow \cNS (X_0)_\R$ and $\Winf_\R \hookrightarrow \wNS(X)$ given by
  \begin{equation}
    \alpha \in \Winf_\R \mapsto \sum_{E \in \sD_\infty (X_0)} \ord_E (\alpha) W_E.
    \label{<+label+>}
  \end{equation}
  Finally, if $\QAlb(X_0) = 0$ the pairing
  \begin{equation}
    \Cinf \times \Winf \rightarrow \R
    \label{<+label+>}
  \end{equation} 
  is perfect.
\end{prop}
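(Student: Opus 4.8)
The plan is to build the two embeddings by passing to the limit over completions, and then to deduce perfectness of the pairing from the perfectness of the pairing $\cNS(X)\times\wNS(X)\to\R$ together with the non-degeneracy statement of Corollary~\ref{cor:non-degenerate-intersection-product}. First I would treat the Cartier side. For each completion $X$ of $X_0$ we have the embedding $\DivInf(X)_\R\hookrightarrow\NS(X)_\R$ (this uses $\QAlb(X_0)=0$, via the argument in Corollary~\ref{cor:non-degenerate-intersection-product}), and these embeddings are compatible with the pullback maps $\pi^*$ on both sides for a morphism $\pi:Y\to X$ of completions. Taking the direct limit gives a linear map $\Cinf_\R\to\cNS(X_0)_\R$; it is injective because injectivity is preserved under direct limits of the relevant diagram (an element of $\Cinf_\R$ is represented on some $\DivInf(X)_\R$, and if its image in $\NS(Y)_\R$ vanishes for some $Y\to X$ then already $\DivInf(Y)_\R\hookrightarrow\NS(Y)_\R$ forces it to vanish, hence it is zero in $\Cinf_\R$).

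Next I would treat the Weil side. Here the natural map is the one written in the statement: $\alpha\mapsto\sum_{E}\ord_E(\alpha)W_E$, and I must check it is well defined, i.e. that this formal sum genuinely defines a Weil class in $\wNS(X)$. For a fixed completion $Y$, only the finitely many $E\subset\Delta_Y$ contribute to the incarnation over $Y$, so $(\sum_E\ord_E(\alpha)W_E)_Y$ is a finite sum of strict transforms, hence a genuine element of $\NS(Y)_\R$; compatibility under pushforward is automatic from Definition~\ref{dfn:weil-class-curve} for $W_C$ together with the compatibility $\pi_*\alpha_Z=\alpha_Y$ built into $\alpha\in\Winf_\R$. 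Injectivity is clear: if all incarnations vanish, then in particular $\ord_E(\alpha)=0$ for every $E\in\sD_\infty(X_0)$, so $\alpha=0$ in the inverse limit.

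Finally, for perfectness of $\Cinf\times\Winf\to\R$, I would argue as follows. The pairing is the restriction of the perfect pairing $\cNS(X_0)\times\wNS(X_0)\to\R$ along the two embeddings just constructed, and one checks it is computed by $\alpha\cdot\beta=\alpha_Y\cdot\beta_Y$ for any completion $Y$ on which $\alpha\in\Cinf$ is defined (with $\beta\in\Winf$). Non-degeneracy on the left: if $\alpha\in\Cinf$ is defined on some $\DivInf(Y)_\R$ and pairs to zero with every element of $\Winf$, then in particular it pairs to zero with every $\DivInf(Y)_\R\subset\Winf_\R$, so $\alpha_Y$ lies in the radical of the intersection form on $\DivInf(Y)_\R$, which is trivial by Corollary~\ref{cor:non-degenerate-intersection-product}; hence $\alpha=0$. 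Non-degeneracy on the right, i.e.\ that the induced map $\Winf_\R\to\Cinf_\R^{\vee}$ is an isomorphism onto the continuous dual, is where the main work lies: one must match the inverse-limit topology on $\Winf_\R$ with the direct-limit topology on $\Cinf_\R$. The cleanest route is to identify $\Cinf_\R$ with $\varinjlim_X\DivInf(X)_\R$ where each $\DivInf(X)_\R$ is finite-dimensional and self-dual under its own (non-degenerate) intersection form, so that $\Winf_\R=\varprojlim_X\DivInf(X)_\R$ is the topological dual of $\Cinf_\R$ term by term, and the transition maps $\pi_*,\pi^*$ are adjoint to each other with respect to the intersection pairing (projection formula). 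Passing to the limit of this system of dualities between finite-dimensional spaces then yields the perfectness. The main obstacle I anticipate is precisely this last point: verifying that $\pi^*$ and $\pi_*$ on $\DivInf$ are genuinely mutually adjoint for the intersection form (this is the projection formula $\pi_*(\pi^*D\cdot E)=D\cdot\pi_*E$ restricted to boundary divisors, which is fine) and that no completeness subtlety spoils the passage to the limit — here I would lean on the fact, recalled in Section~\ref{subsec:picard-manin-spaces} from \cite{boucksomDegreeGrowthMeromorphic2008}, that $\cNS(X)$ is already the topological dual of $\wNS(X)$, and simply restrict that duality to the subspaces $\Cinf_\R$ and $\Winf_\R$, checking that the orthogonal complement of $\Cinf_\R$ in $\wNS(X_0)$ meets $\Winf_\R$ trivially.
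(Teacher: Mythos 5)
Your proposal is correct and follows essentially the same route as the paper: both embeddings are obtained by passing to the limit over completions (well-definedness of the Weil-side map coming from the fact that each model contains only finitely many divisors of $\sD_\infty(X_0)$), and perfectness of the pairing rests on Corollary~\ref{cor:non-degenerate-intersection-product} together with the observation that $\alpha\cdot\beta$ is computed in any completion where $\alpha$ is defined. The paper's proof is far terser — it leaves implicit the term-by-term self-duality of the finite-dimensional spaces $\DivInf(X)_\R$ and the adjointness of $\pi^*$ and $\pi_*$ that you spell out — but the underlying argument is the same.
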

\begin{proof}
  It is clear that $\Cinf$ is a subset of $\cNS(X_0)$. Let $X$ be a completion of $X_0$ and recall that $\wNS(X_0) =
  \wNS(X)$, a birational model $Y \rightarrow X$ (which is not necessarily a completion of $X_0$ anymore) contains
  finitely many prime divisors $E \in \sD_\infty (X_0)$ we
  denote them by $E_1, \dots, E_r$. We set 
  \begin{equation}
    W_Y = \sum_{i = 1}^r \ord_{E_i} (W) E_i.
    \label{<+label+>}
  \end{equation}
  This is compatible with the inverse limit.

  The exactness of the pairing comes from Corollary \ref{cor:non-degenerate-intersection-product} because if $\alpha \in
  \Cinf$ and $\beta \in \Winf$ we can compute the intersection product $\alpha \cdot \beta$ in a completion of $X_0$.
\end{proof}
\begin{rmq}\label{rmq:}
  The sum in Proposition \ref{prop:embeddings} is formal, we are not saying that this series converges is
  $\wNS(X)$. This is a compact way of writing the incarnation of the Weil class in every birational model.
\end{rmq}

\begin{prop}\label{prop:}
  Let $X_0$ be a smooth affine surface with $\QAlb(X_0) = 0$, then any $D \in \Cinf$ is uniquely characterised by 
  \begin{equation}
    \left( W_E \cdot D \right)_{E \in \sD_\infty(X_0)}.
    \label{<+label+>}
  \end{equation}
\end{prop}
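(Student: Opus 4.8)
The plan is to reduce the statement to the non-degeneracy of the intersection form on $\DivInf(X)$ for a single completion $X$, which is exactly Corollary \ref{cor:non-degenerate-intersection-product}. Since the map $D \mapsto \left( W_E \cdot D \right)_{E \in \sD_\infty(X_0)}$ is linear, it suffices to prove that its kernel is trivial; the uniqueness assertion for two Cartier classes then follows by applying this to their difference, which is defined over a common completion of both.

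So let $D \in \Cinf$ with $W_E \cdot D = 0$ for every $E \in \sD_\infty(X_0)$. First I would fix a completion $X$ of $X_0$ over which $D$ is defined, and write $\Delta_X = E_1 \cup \cdots \cup E_r$ for its irreducible components and $D_X = \sum_{i} \ord_{E_i}(D)\, E_i \in \DivInf(X)$ for the incarnation of $D$ in $X$. Each $E_i$ defines a class in $\sD_\infty(X_0)$, and, taking $Y = X$ in Definition \ref{dfn:weil-class-curve}, the incarnation over $X$ of the Weil class $W_{E_i}$ is $E_i$ itself. Since the pairing $\Cinf \times \Winf \to \R$ is computed in any birational model where the Cartier class is defined, this gives
\begin{equation}
  W_{E_i} \cdot D = E_i \cdot D_X,
\end{equation}
the intersection number being taken in $X$.

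Hence the hypothesis forces $E_i \cdot D_X = 0$ for all $i = 1, \dots, r$, i.e.\ $D_X$ lies in the radical of the intersection form restricted to $\DivInf(X)$. Since $\QAlb(X_0) = 0$, Corollary \ref{cor:non-degenerate-intersection-product} applies and says this intersection form is non-degenerate; therefore $D_X = 0$, and so $D = 0$ in $\Cinf$, because the transition maps $\pi^* \colon \DivInf(X) \hookrightarrow \DivInf(Y)$ defining $\Cinf = \varinjlim_X \DivInf(X)$ are injective. Incidentally, the argument shows a little more: once a completion $X$ with $D \in \DivInf(X)$ is fixed, $D$ is already determined by the finitely many numbers $W_{E_i} \cdot D$, $1 \le i \le r$. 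There is no serious obstacle in this proof; the only point requiring care is the bookkeeping with incarnations in the direct and inverse limits, in particular the identification of the incarnation of $W_{E_i}$ over $X$ with $E_i$ itself.
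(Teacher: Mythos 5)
Your argument is correct, but it is organised differently from the paper's. The paper takes an arbitrary $W \in \Winf$, expands its incarnation in a completion $X$ where $D$ is defined as $W_X = \sum_{E \subset \Delta_X} \ord_E(W)\, W_{E,X}$, deduces the identity $D \cdot W = \sum_{E \in \sD_\infty(X_0)} \ord_E(W)\,(W_E \cdot D)$, and then concludes by the perfectness of the pairing $\Cinf \times \Winf \rightarrow \R$ from Proposition \ref{prop:embeddings}. You instead stay inside a single completion: after identifying $(W_{E_i})_X$ with $E_i$ itself, the vanishing of the tuple puts $D_X$ in the radical of the intersection form on $\DivInf(X)$, and Corollary \ref{cor:non-degenerate-intersection-product} kills it. Since the perfectness of the pairing in Proposition \ref{prop:embeddings} is itself deduced from that same corollary, both arguments bottom out at the same non-degeneracy statement; yours short-circuits the duality with $\Winf$ and has the small bonus of showing that, once a completion $X$ with $D \in \DivInf(X)_\R$ is fixed, the finitely many numbers $W_{E_i} \cdot D$ for $E_i \subset \Delta_X$ already determine $D$. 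The paper's version, on the other hand, records the expansion formula for $D \cdot W$ against a general Weil class, which is the form in which the statement is used later. Your bookkeeping of incarnations (in particular $W_{E_i} \cdot D = E_i \cdot D_X$ computed in $X$) and the reduction of uniqueness to triviality of the kernel via a common dominating completion are both handled correctly.
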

\begin{proof}
  Let $W \in \Winf$, then 
  \begin{equation}
    W \cdot D = W_X \cdot D_X
    \label{<+label+>}
  \end{equation}
  where $X$ is a completion where $D$ is defined. But, $W_X = \sum_{E \subset \Delta_X} \ord_E (W_X) E = \left(\sum_{E \subset
  \Delta_X} \ord_E (W) W_{E,X}\right)$. So that 
  \begin{equation}
    D \cdot W = \sum_{E \in \sD_\infty (X_0)} \ord_E (W) (W_E \cdot D). 
    \label{<+label+>}
  \end{equation}
\end{proof}

\begin{prop}\label{prop:functoriality}
  Let $f :X_0 \rightarrow X_0$ be a dominant endomorphism.
  The spaces $\Winf_\R$ and $\Cinf_\R$ are $f^*$-invariant. They are $f_*$-invariant if and only if $f$ is proper.
\end{prop}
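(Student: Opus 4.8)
The plan is to treat in turn the $f^*$-invariance and the two directions of the equivalence for $f_*$, the common engine being the elementary observation that \emph{if $Y$ is a completion of $X_0$ and $C$ is a component of $\Delta_Y$, then $\ord_C$ has no center on $X_0$}: the unique center of $\ord_C$ on the proper surface $Y$ is the generic point $\eta_C\notin X_0$, so no center can lie in the open set $X_0$ (equivalently, by Goodman's Theorem~\ref{thm:goodman}, every such $C$ carries a pole of a regular function on $X_0$). Throughout I write $X_0$ for both source and target of $f$ and set $n=\deg f<\infty$.

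\emph{Invariance under $f^*$.} I would represent $D\in\Cinf$ by an $\R$-divisor supported on $\Delta_X$ in a completion $X$ of the target, choose a completion $Y$ of the source and a morphism $F\colon Y\to X$ extending $f$ (resolve the closure of the graph of $f$), so that $f^*D=F^*D$. Every prime divisor in the support of $F^*D$ is mapped into $\Delta_X$ by $F$, hence is disjoint from $X_0$ since $f(X_0)\subseteq X_0$ and $X_0\cap\Delta_X=\emptyset$; so it lies in $\Delta_Y$ and $F^*D\in\DivInf(Y)_\R$. For $\Winf$ I would argue by density: $\Cinf$ is dense in $\Winf$ (the map $\gamma\mapsto\gamma_X$ restricted to $\Cinf$ is onto $\DivInf(X)_\R$ for every completion $X$), $\Winf$ is closed in $\wNS(X_0)$ (it is the inverse limit of the closed subspaces $\DivInf(X)_\R\subseteq\NS(X)_\R$, cf.\ Proposition~\ref{prop:embeddings}), and $f^*$ is continuous, so $f^*\Winf=f^*\overline{\Cinf}\subseteq\overline{f^*\Cinf}\subseteq\overline{\Cinf}=\Winf$.

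\emph{If $f$ is proper.} Given a completion $Y'$ of the target, pick a completion $Y$ of the source and a morphism $F\colon Y\to Y'$ extending $f$. I claim $F^{-1}(\Delta_{Y'})=\Delta_Y$: if some component $C$ of $\Delta_Y$ had $F(\eta_C)\in X_0$, then $F$ would map a neighbourhood of $\eta_C$ into $X_0$, and the valuative criterion of properness for $f$, applied to the valuation ring $\OO_{Y,\eta_C}$ of $K(X_0)$ with the diagram completed by the generic points and by $F$, would produce a morphism $\spec\OO_{Y,\eta_C}\to X_0$, i.e.\ a center of $\ord_C$ on $X_0$, contradicting the observation above. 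Hence $F(\Delta_Y)\subseteq\Delta_{Y'}$, so $F_*(W_Y)$ (resp.\ $F_*(D_Y)$) is supported on $\Delta_{Y'}$ for every $W\in\Winf$ (resp.\ $D\in\Cinf$); letting $Y'$ vary gives $f_*\Winf\subseteq\Winf$ and $f_*\Cinf\subseteq\Cinf$.

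\emph{If $f$ is not proper.} Fix an extension $F\colon Y\to Y'$ between completions. If $F^{-1}(X_0)$ equalled $X_0$, then $f$ would be the base change of the proper morphism $F$ along the open immersion $X_0\hookrightarrow Y'$, hence proper; so some component $C$ of $\Delta_Y$ has $q:=F(\eta_C)\in X_0$. The restriction $\mu$ of $\ord_C$ to $K(X_0)$ along $f^*$ is a nontrivial valuation centered at $q$, and since $K(X_0)/f^*K(X_0)$ is finite, the fundamental inequality forces $\mu$ to be discrete of rational rank $1$ with the residue field of $\ord_C$ finite over that of $\mu$; so the residue field of $\mu$ has transcendence degree $1$ over $K$, $\mu$ achieves equality in Abhyankar's inequality, and $\mu$ is a positive multiple of a divisorial valuation $\ord_D$ realised by a prime divisor $D$ lying over $q$ in a projective model $Y''$ of the target. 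After blowing up $Y$ I may assume $F$ lifts to a morphism $\widehat F\colon\widehat Y\to Y''$; since $\ord_{\widetilde C}$ restricts to $\mu$ on $K(Y'')$, its center on $Y''$ is $\eta_D$, i.e.\ $\widehat F$ maps the strict transform $\widetilde C$ of $C$ onto $D$. Now $\widetilde C$ is a divisor at infinity of $X_0$, so $W_{\widetilde C}\in\Winf$ and the class of $(Y,C)$ lies in $\Cinf$; but the incarnations on $Y''$ of their $f_*$-images are effective with a strictly positive coefficient along $D$ (the degree of $\widehat F|_{\widetilde C}$ in the Weil case), whereas $D$ lies over the point $q\in X_0$ and is therefore a divisor of $X_0$, not one at infinity. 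Hence $f_*$ preserves neither $\Winf$ nor $\Cinf$. The step I expect to be the main obstacle is precisely this one: a component of $\Delta_Y$ \emph{contracted} by $F$ must still push forward to a genuine prime divisor over $X_0$, and it is the fundamental inequality — forcing the valuation induced on the target to be divisorial rather than infinitely singular — that makes this hold even though the pushforward may vanish on $Y'$ itself.
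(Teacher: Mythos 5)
Your proof is correct and follows the same route as the paper's (very terse) argument: reduce everything to the action of a regular lift $F\colon Y\to X$ on $\DivInf$ and invoke the valuative criterion of properness for the $f_*$-equivalence. You in fact supply a detail the paper's one-line proof glosses over, namely the case where a boundary component mapping into $X_0$ is \emph{contracted} by $F$, which you handle correctly by observing that $f_*\ord_C$ is then a divisorial valuation centered at a point of $X_0$ (via the fundamental inequality for the finite extension $K(X_0)/f^*K(X_0)$), so the pushforward escapes $\Winf$ on a sufficiently high model of the target.
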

\begin{proof}
  The $f^*$-invariance comes from the fact that for any completions $Y,X$ of $X_0$ such that $f$ lifts to a regular
  morphism $F : Y \rightarrow X$ we have that $F^*$ sends $\DivInf(X)$ to $\DivInf(Y)$. However, $F_* (\DivInf(Y)) =
  \DivInf(X)$ if and only if $f$ is proper by the valuative criterion of properness (see
  \cite{hartshorneAlgebraicGeometry1977}).
\end{proof}

\subsection{Cyclic completions}\label{subsec:cyclic-completions}

Let $X_0$ be a smooth affine surface completable by a cycle of rational curves. We are going to consider the inverse
system of cyclic completions of $X_0$. A \emph{cyclic completion} of $X_0$ is a completion $X$ of $X_0$ such that
$\Delta_X$ is a cycle of rational curves. We say that a closed point $p \in \Delta_X$ is a \emph{satellite point} if it lies
at the intersection of two prime divisors at infinity and a \emph{free point} otherwise.

We use the following result which follows from a combinatorial argument. A proof of which can be found in \cite[Lemma
8.3]{cantatCommensuratingActionsBirational2019}.
\begin{prop}\label{prop:indeterminacy-point-cycle}
Let $X,Y$ be two cyclic completions of $X_0$, let $f \in \Aut (X_0)$ and let $f : X \dashrightarrow Y$ be the induced
birational map. Then, every indeterminacy point of $f : X \dashrightarrow Y$ is a satellite point.
\end{prop}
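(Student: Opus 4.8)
The plan is to argue by contradiction: suppose $f : X \dashrightarrow Y$ has an indeterminacy point $p \in \Delta_X$ that is a free point, i.e.\ $p$ lies on exactly one component $E$ of the cycle $\Delta_X$. First I would recall the basic structure of the resolution of $f$: there is a minimal completion $Z$ with proper birational morphisms $\pi : Z \to X$ and $\pi' : Z \to Y$ whose composition $\pi' \circ \pi^{-1}$ equals $f$, and $\pi$ is obtained by a sequence of blow-ups starting with the blow-up of $p$. Since $f$ restricts to an isomorphism $X_0 \to X_0$, every component of $\Delta_X$, $\Delta_Y$, and every exceptional curve over $p$ lies in the boundary $\Delta_Z = Z \setminus X_0$; so the entire analysis takes place inside the dual graphs of boundary divisors, which for $X$ and $Y$ are cycles and for $Z$ is a connected simple-normal-crossing graph.

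The key combinatorial point is to track what happens to the cycle $\Delta_X$ under the blow-up $\varepsilon : X_1 \to X$ at the free point $p \in E$. Blowing up a free point on the cycle replaces the $n$-gon by an $(n{+}1)$-gon: the new curve $E'$ is inserted into the cycle between the two pieces of the strict transform of $E$ (or, if $n=1$, one must be a bit careful, but the surface is still completed by a cycle). More generally, any further blow-up needed to resolve $f$ is centered either at a free point or a satellite point of the current boundary, and in the free case we again just lengthen the cycle, while in the satellite case we subdivide an edge — in both cases the dual graph of $\Delta_{X_1}, \Delta_{X_2}, \dots$ remains a cycle. Thus $Z$, the minimal resolution, is itself a cyclic completion of $X_0$: its boundary dual graph is a cycle. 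Now $\pi' : Z \to Y$ is a morphism of completions contracting some boundary curves; since $Z$ and $Y$ both have cyclic boundary and a cycle has no leaves (every vertex has valence $2$), the only curves $\pi'$ could contract would be $(-1)$-curves in the cycle, and contracting such a curve in a cycle of length $\geq 2$ produces a new cycle, while it is impossible to shorten a cycle of length one further — so $\pi'$ contracts no curve at all, hence is an isomorphism. But then $f = \pi' \circ \pi^{-1}$ already lifts to the morphism $\pi^{-1}$ composed with an isomorphism... which forces $\pi$ itself to be an isomorphism, contradicting that we blew up $p$. This is the substance of the combinatorial argument referenced as \cite[Lemma 8.3]{cantatCommensuratingActionsBirational2019}.

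The cleanest way to package this, which I would adopt, is to phrase it purely in terms of weighted dual graphs: associate to each completion its cycle of self-intersection numbers, note that an elementary transformation (blow up a point, blow down a $(-1)$-curve) on a cyclic boundary is governed by whether the point is free or satellite, and observe that the factorization of $f$ into elementary transformations must return a cycle to a cycle; a short parity/valence bookkeeping then shows no free indeterminacy point can occur. The main obstacle — and the only place real care is needed — is the low-length edge cases (cycles with one or two components, where the ``cycle'' has a node or a bigon and the combinatorics of blow-ups and blow-downs is slightly degenerate), together with verifying that the minimal resolution $Z$ does not acquire a non-cyclic boundary when both $\pi$ and $\pi'$ are nontrivial; handling these correctly is exactly what makes the referenced lemma a lemma rather than an immediate observation, so I would simply cite \cite[Lemma 8.3]{cantatCommensuratingActionsBirational2019} for the full combinatorial verification.
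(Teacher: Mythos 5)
Your sketch contains a genuine error in its central combinatorial step, and the error inverts the whole logic of the argument. You assert that blowing up a \emph{free} point $p$ on a component $E$ of the cycle ``replaces the $n$-gon by an $(n{+}1)$-gon: the new curve is inserted into the cycle between the two pieces of the strict transform of $E$.'' This is false: blowing up a point on the smooth locus of $E$ does not disconnect $E$ — its strict transform remains a single irreducible curve, and the exceptional divisor $F$ meets it transversally at exactly one point. The dual graph of the new boundary is therefore the old cycle with a pendant vertex attached at $E$ (so $E$ now has valence $3$ and $F$ has valence $1$); it is \emph{not} a cycle. It is the blow-up of a \emph{satellite} point (a node of the cycle) that inserts a new vertex into the cycle and turns the $n$-gon into an $(n{+}1)$-gon — exactly the opposite of what you wrote, and exactly why the statement singles out satellite points as the only admissible indeterminacy points. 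The correct dictionary is: satellite blow-ups preserve the cyclic boundary (and satisfy $K_{X'}+\Delta_{X'}=\varepsilon^*(K_X+\Delta_X)$, cf.\ Proposition \ref{prop:Cartier-canonical-class}), while free blow-ups destroy it (and contribute $+F$ to $K+\Delta$); the content of \cite[Lemma 8.3]{cantatCommensuratingActionsBirational2019} is that a resolution of $f$ passing through a free blow-up cannot be contracted back down to a cyclic boundary on the $Y$ side without either contracting a curve meeting $X_0$ or violating minimality of the resolution.

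A second symptom that something has gone wrong: your argument, taken at face value, concludes that $\pi'$ and hence $\pi$ are isomorphisms, i.e.\ that $f : X \dashrightarrow Y$ has \emph{no} indeterminacy points whatsoever. That conclusion is far too strong and is false — elementary transformations centered at the nodes of the cycle are ubiquitous (they generate the whole inverse system of cyclic completions on which Section \ref{sec:valuations-cyclic-completions} is built), and for those maps both $\pi$ and $\pi'$ are nontrivial. The proposition only claims that the indeterminacy points are satellite, not that there are none. Since the paper itself simply cites \cite[Lemma 8.3]{cantatCommensuratingActionsBirational2019} for this fact, deferring to that reference is fine; but the sketch you offer in its place does not stand as written and would need the free/satellite roles swapped and the contraction step reworked before it could serve as a proof.
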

This proposition implies that to study the dynamics of an automorphism of $X_0$ we only need to look at cyclic
completions. This motivates us to introduce restricted Picard-Manin spaces involving only cyclic completions.
Two cyclic completions are always dominated by a third one. We define 
\begin{equation}
  \Cartier_{cyc} (X_0) = \varinjlim_{X \text{ cyclic}} \DivInf(X)_\R, \quad \Weil_{cyc} (X_0) = \varprojlim_{X
  \text{ cyclic}} \DivInf(X)_\R.
  \label{<+label+>}
\end{equation}
We also define 
\begin{equation}
  \cNS_{cyc}(X_0)_\R = \varinjlim_{X \text{cyclic}} \NS (X)_\R, \quad \wNS_{cyc} (X_0)_\R = \varprojlim_{X,
  \text{cyclic}} \NS(X)_\R.
  \label{<+label+>}
\end{equation}
We have natural embeddings 
\begin{equation}
  \Cartier_{cyc} (X_0) \hookrightarrow \Cinf_\R \hookrightarrow \cNS(X_0)_\R
  \label{<+label+>}
\end{equation}
and 
\begin{equation}
  \Weil_{cyc} (X_0) \hookrightarrow \Winf_\R \hookrightarrow \wNS(X_0)_\R.
  \label{}
\end{equation}
By Proposition \ref{prop:functoriality} and Proposition \ref{prop:indeterminacy-point-cycle} we have that
$\Aut(X_0)$ acts by pullback on $\Weil_{cyc} (X_0)_\R$, $\wNS_{cyc}(X_0)_\R$, $\Cartier_{cyc} (X_0)_\R$ and
$\cNS_{cyc}(X_0)_\R$.
Define the class $K+\Delta \in \wNS_{cyc}(X_0)_\R$ to be the Weil class with incarnation 
\begin{equation}
  (K + \Delta)_X = K_X + \Delta_X
  \label{<+label+>}
\end{equation}
in every cyclic completion $X$ of $X_0$.

\begin{prop}\label{prop:Cartier-canonical-class}
  The class $K+\Delta \in \wNS{cyc} (X_0)$ is in fact a Cartier class $K+\Delta \in \cNS_{cyc} (X_0)$ determined
  by its incarnation in any cyclic completion $X$ of $X_0$. Furthermore, $K+\Delta$ is fixed by $\Aut (X_0)$.
\end{prop}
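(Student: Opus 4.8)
The plan is to show two things: first, that the Weil class $K+\Delta$ is actually Cartier, meaning it is pulled back from some fixed cyclic completion; and second, that it is $\Aut(X_0)$-invariant. For the Cartier statement, the key point is that between two cyclic completions the transition maps behave well on the canonical-plus-boundary class. Fix a cyclic completion $X$ of $X_0$ and let $\pi : Y \rightarrow X$ be a morphism of cyclic completions. By Proposition \ref{prop:indeterminacy-point-cycle} (applied with $f = \id$), the birational map $X \dashrightarrow Y$, hence $\pi$, has all its centres at satellite points of the cycle. So $\pi$ is a composition of point blow-ups, each centred at the intersection of two boundary components. The crucial computation is that blowing up such a satellite point $p = E_i \cap E_{i+1}$ on a cycle, with exceptional divisor $F$, transforms the boundary $\Delta$ into $\Delta' = \pi^* \Delta - F$ (the strict transforms of $E_i, E_{i+1}$ each lose one, and $F$ appears with coefficient one), while $K$ transforms as $K_Y = \pi^* K_X + F$. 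Adding these, $K_Y + \Delta_Y = \pi^*(K_X + \Delta_X)$. Iterating over the sequence of satellite blow-ups comprising $\pi$, we get $(K+\Delta)_Y = \pi^*(K+\Delta)_X$ for every cyclic $Y$ dominating $X$, which is exactly the statement that the Weil class $K+\Delta$ is the Cartier class determined by $(K_X+\Delta_X) \in \NS(X)_\R$, living already at the level $X$ of the inductive system. Concretely one invokes the compatibility in Proposition \ref{prop:operators}: since $\pi$ contracts no curves (it is a morphism), $(f^*\beta)_Y = (f^*\beta_{Y'})_Y$, and we have just shown $\beta_Y = \pi^*\beta_X$ for $\beta = K+\Delta$.

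For the $\Aut(X_0)$-invariance, let $f \in \Aut(X_0)$ and let $X$ be a cyclic completion. Choose a cyclic completion $Y$ dominating $X$ such that $f$ lifts to a morphism $F : Y \rightarrow X$; this is possible because by Proposition \ref{prop:indeterminacy-point-cycle} the indeterminacy points of $f : X \dashrightarrow X$ are satellite points, which we resolve by satellite blow-ups, staying within cyclic completions. Now $F : Y \rightarrow X$ is an isomorphism over $X_0$ composed-on-the-other-side with the automorphism $f$; more precisely $F$ restricts to $f : X_0 \rightarrow X_0$, so $F$ maps $\Delta_Y$ onto $\Delta_X$ and is itself a composition of (satellite) blow-ups followed by an abstract isomorphism of the source cycle. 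Since $f$ is an automorphism of $X_0$ and an isomorphism of surfaces preserves $K$ and sends boundary to boundary, applying the transformation law from the previous paragraph to the blow-up part of $F$ and naturality of $K$ under isomorphisms to the rest, we obtain $(K+\Delta)_Y = F^*(K_X+\Delta_X)$. On the other hand, by definition of the pullback action of $f$ on $\cNS_{cyc}$ together with Proposition \ref{prop:operators}, $(f^*(K+\Delta))_Y = F^*((K+\Delta)_X) = F^*(K_X+\Delta_X)$, which is the same incarnation. Since a Cartier class is determined by its incarnation in any one cyclic model (the first part of the proposition), $f^*(K+\Delta) = K+\Delta$.

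The main obstacle I expect is the bookkeeping in the blow-up transformation law: one must be careful that the point being blown up really is a \emph{satellite} node of the cycle rather than a free point, since at a free point lying on a single component $E_i$ one instead gets $\Delta' = \pi^*\Delta - F$ but $K_Y = \pi^*K_X + F$ — wait, that still gives $K_Y + \Delta_Y = \pi^*(K_X+\Delta_X)$. Actually the formula $K_Y+\Delta_Y = \pi^*(K_X+\Delta_X)$ holds for blowing up \emph{any} point of $\Delta_X$, free or satellite; what Proposition \ref{prop:indeterminacy-point-cycle} buys us is merely that we never leave the inverse system of \emph{cyclic} completions (blowing up a free point of a cycle keeps it a cycle, and so does blowing up a satellite node — it lengthens the cycle by one). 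So the real content is just (i) this elementary adjunction-type identity, valid because $\Delta$ is reduced simple normal crossings and $p \in \Delta$, and (ii) checking that the inductive system of cyclic completions is cofinal enough to resolve all the $\id$- and $f$-indeterminacies, which is Proposition \ref{prop:indeterminacy-point-cycle}. The one genuinely delicate point is making sure the identity is stated for an $\R$-divisor pulled back correctly (the strict transform $\widetilde{E_i}$ satisfies $\pi^*E_i = \widetilde{E_i} + m_p(E_i)F$ with $m_p(E_i) \in \{0,1\}$ the multiplicity), so that summing over the components of a reduced SNC $\Delta$ the total correction is exactly $(\sum_{E \ni p} 1)\cdot F$, matching the discrepancy $K_Y = \pi^*K_X + F$ only when… and here one simply notes $\sum_{E\ni p}1 \in\{1,2\}$ so the identity $K_Y+\Delta_Y = \pi^*(K_X+\Delta_X)$ needs $\Delta_Y = \pi^*\Delta_X - (\sum_{E\ni p}1 - 1)F$, i.e. the \emph{log} pullback — which is precisely the log-crepant nature of blowing up a point on an SNC boundary, and this is a standard fact I would cite rather than reprove.
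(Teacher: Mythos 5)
Your first two paragraphs follow essentially the same route as the paper: the satellite blow-up computation $K_Y=\pi^*K_X+F$, $\Delta_Y=\pi^*\Delta_X-F$, hence $K_Y+\Delta_Y=\pi^*(K_X+\Delta_X)$, combined with Proposition~\ref{prop:indeterminacy-point-cycle} to realise any $f\in\Aut(X_0)$ as a regular morphism $F:Y\rightarrow X$ between cyclic completions. That part is correct and is exactly the paper's argument (the paper is in fact terser and does not spell out why only satellite blow-ups occur; your reduction via Proposition~\ref{prop:indeterminacy-point-cycle} applied to $\id$ is a reasonable way to fill that in).

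Your closing paragraph, however, contains two false claims that must be struck. First, blowing up a \emph{free} point $p\in E_i$ does \emph{not} keep the completion cyclic: the exceptional curve $F$ meets only the strict transform of $E_i$, so the dual graph acquires a pendant vertex while $E_i$ acquires valence $3$, and no sequence of further blow-ups can lower that valence back to $2$. Second, the identity $K_Y+\Delta_Y=\pi^*(K_X+\Delta_X)$ \emph{fails} at a free point: the reduced divisor $\Delta_X$ has multiplicity $1$ there, so $\Delta_Y=\widetilde{\Delta_X}+F=\pi^*\Delta_X$ (not $\pi^*\Delta_X-F$), and therefore
\begin{equation*}
  K_Y+\Delta_Y=\pi^*(K_X+\Delta_X)+F .
\end{equation*}
This is not a pedantic point: the paper uses precisely this $+F$ discrepancy at free points in the proof of Proposition~\ref{prop:charac-cercle}, and the class $K+\Delta$ is \emph{not} Cartier in the full system of all completions --- only in the cyclic subsystem. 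Your argument survives only because a morphism between cyclic completions factors exclusively through satellite blow-ups (precisely because a free blow-up destroys cyclicity), so the one case you computed correctly is the only one that ever arises. That, and not a purported ``log-crepancy at any point of an SNC boundary,'' is the reason the Cartier statement holds, and the final paragraph should be replaced by this observation.
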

\begin{proof}
  This follows from the fact if $X$ is a cyclic completion and $\pi : Y \rightarrow X$ is the blow-up of an
  intersection point of the cycle $\Delta_X$ with exceptional divisor $E$ , then $K_Y = \pi^* K_X + E$ and $\Delta_Y = \pi^*
  \Delta_X - E$ so that $K_Y + \Delta_Y = \pi^* (K_X + \Delta_X)$.

  Now, we show that $K+\Delta$ is $\Aut(X_0)$-invariant. Let $f \in \Aut (X_0)$, then by Proposition
  \ref{prop:indeterminacy-point-cycle} we have that there exist two cyclic completions $Y,X$ such that the lift of $f$
  is a regular birational morphism $F : Y \rightarrow X$. Since $F$ is a composition of blow-ups of intersection point
  of curves at infinity and an automorphism of $Y$ preserving $X_0$, then 
  \begin{equation}
    f^* (K+ \Delta) = F^* (K_X + \Delta_X) = K_Y + \Delta_Y.
    \label{<+label+>}
  \end{equation}
\end{proof}

\subsection{Valuations and dual divisor}\label{subsec:valuations}
\subsubsection{Definitions}\label{subsubsec:def-valuations}

Let $X_0$ be a normal affine surface, we write $K[X_0]$ for its ring of regular functions. A $K$-valuation over
$K[X_0]$ is a map $v : K[X_0] \rightarrow \R \cup \left\{ \infty \right\}$ such that 
\begin{enumerate}
  \item $v_{|K^\times} = 0$. 
  \item $v(0) = \infty$.
  \item $\forall P,Q \in K[X_0], \quad v(PQ) = v(P) + v (Q)$. 
  \item $\forall P,Q \in K[X_0], \quad v(P+Q) \geq \min \left( v(P), v(Q) \right)$.
\end{enumerate}
We give two examples that will be useful for this note. First we have \emph{divisorial} valuations. Let $X$ be a
completion of $X_0$ and $E$ a prime divisor at infinity, then $\ord_E$ the order of vanishing along $E$ is a valuations
over $K[X_0]$. The second example is as follows. Let $E,F$ be two divisors at infinity in $X$ intersecting at a point
$p$, then we can find local coordinates $(x,y)$ at $p$ such that $x = 0 $ is a local equation of $E$ and $y = 0$ is a
local equation of $F$. For $\alpha, \beta > 0$ we define $v_{\alpha,\beta}$ as follows. The completion of the local ring
at $p$ with respect to its maximal ideal is $K \left[ [ x,y ] \right]$ the ring of formal power series in $x,y$. Define 
\begin{equation}
  v_{\alpha,\beta} \left( \sum_{i,j} a_{ij} x^i y^j \right) = \min \left( \alpha i + \beta j : a_{ij} \neq 0 \right).
  \label{<+label+>}
\end{equation}
This defines a valuation over $K[X_0]$ because every $P \in K[X_0]$ is in the fraction field of $\OO_{X,p}$ the local
ring at $p$ and this fraction field embeds into $K [ [ x ,y ] ]$. We have two possibilities, if $\alpha / \beta \in \Q$,
then $v_{\alpha,\beta} = \ord_G$ is also divisorial but we need to blow up $p$ and infinitely near points to make $G$
appear. Otherwise $v_{\alpha,\beta}$ is \emph{irrational}.

If $X$ is a completion of $X_0$ and $v$ is a valuation over $K[X_0]$, then by the valuative criterion of properness
there exists a unique point $c_X (v) \in X$ such that $v_{|\OO_{X,c_X(v)}} \geq 0$ and $v_{|\m_{X, c_X(v)}} > 0$. This
point is called the \emph{center} of $v$ over $X$. We say that $v$ is \emph{centered at infinity} if $c_X(v) \not \in
X_0$ this is equivalent to the existence of $P \in K[X_0]$ such that $v(P) < 0$. We denote by $\cV_\infty$ the space of
valuations centered at infinity and $\hat \cV_\infty = \cV_\infty / \R^\times_+$ where $\R^\times_+$ acts by
multiplication. If $\ord_E$ is a divisorial valuation we write $v_E$ for its image in $\hat \cV_\infty$. The space
$\cV_\infty$ can be made into a topological space and $\hat \cV_\infty$ as well, they are two topologies that one can
consider called the weak and the strong topology.

If $f : X_0 \rightarrow X_0$ is a dominant endomorphism then $f$ acts by pushforward on the space of valuations 
\begin{equation}
  \forall P \in K[X_0], \quad f_* v (P) = v(f^*P).
  \label{<+label+>}
\end{equation}
It is not true in general that $f_* (\cV_\infty) \subset \cV_{\infty}$ in fact this holds if and only if $f$ is proper
which is for example the case if $f$ is an automorphism. But the following holds. If $v \in \cV_\infty$ is divisorial
(resp. irrational) and $f_* v \in \cV_\infty$, then $f_*v$ is also divisorial (resp. rational).

\subsubsection{The circle at infinity}\label{subsubsec:circle-infinity}
As explained in \cite{abboudDynamicsEndomorphismsAffine2023}, suppose $X_0$ is an affine surface completable by a cycle
of rational curves. If we look at all cyclic completions $X$ of $X_0$,
the inverse limit of the dual graph of $\Delta_X$ is a circle $\cC_\infty$ which corresponds to a subset of $\hat
\cV_\infty$.
Namely, all the rational points of $\cC_\infty$ corresponds to divisorial valuations associated to divisors on
$\Delta_X$ for some cyclic completion $X$ and the irrational points of $\cC_\infty$ correspond to irrational valuations.
More precisely, suppose $X$ is a cyclic completion of $X_0$ and $E,F$ are two prime divisors at infinity intersecting at
a single point $p = E \cap F$. Then, the divisors $E,F$ correspond to two points $v_E, v_F \in \cC_\infty$ which are the
divisorial valuations associated to $E$ and $F$ and the segment $]v_E, v_F[ \subset \cC_\infty$ is given by the monomial
valuations centered at $p$ of the form $v_{1,s}$ for $s \in (0, \infty)$. Notice that $\ord_E = v_{1,0}$ and that
$v_{1,s} \rightarrow \ord_F$ when $s \rightarrow + \infty$. Enumerating $\Delta_X = E_1 \cup \cdots \cup E_r$ with
$E_i \cdot E_{i+1} = 1$ and $E_1 \cdot E_r = 1$, the segments $[v_{E_i}, v_{E_{i+1}}[$ glue together to define a circle
  that we denote by $\cC_\infty$. Furthermore, the strong topology on $\hat \cV_\infty$ restricts to the usual topology
  on the circle $\cC_\infty$. We will write $v \in \cC_\infty$ when $[v] \in \cC_\infty$ so as not to burden notations.
  This will not provide any confusion as most of the proofs  will work with any multiple of $v$ when $[v] \in
  \cC_\infty$.

\subsubsection{Dynamics on $\cC_\infty$}\label{subsubsec:dyn-over-valuations}
It is easy to describe the dynamics of a loxodromic automorphism $f \in \Aut(X_0)$ over $\cC_\infty$ when $X_0$ is
completable by a cycle of rational curves. Every $f \in \Aut (X_0)$ acts on $\cC_\infty$ as a homeomorphism.

\begin{thm}[{\cite{abboudDynamicsEndomorphismsAffine2023}}]\label{thm:dynamics-loxodromic-automorphisms}
  Let $f \in \Aut(X_0)$ be a loxodromic automorphism with $X_0 \neq \G_m^2$. Then, up to multiplication by a positive
  scalar there exist two unique distinct valuations $v_\pm \in \cC_\infty$ such that 
  \begin{equation}
    (f^{\pm 1})_* v_{\pm} = \lambda (f) v_\pm.
    \label{<+label+>}
  \end{equation}
  In particular, they are fixed in $\cC_\infty$ and furthermore, for every $v \in \cV_\infty$ such that the class
  $[v] \in \hat \cV_\infty$ belongs to $\cC_\infty  \setminus \left\{ v_-
  \right\}$, there exists a constant $c > 0$ such that 
  \begin{equation}
    \frac{1}{\lambda^n} f^n_* v \rightarrow c \cdot v_+
    \label{<+label+>}
  \end{equation}
  for the strong topology, in particular $f^n_* [v] \rightarrow [v_+]$ in $\hat \cV_\infty$. Finally, these two valuations
  $v_{\pm}$ are irrational valuations.
\end{thm}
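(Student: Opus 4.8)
The plan is to reduce the statement to the dynamics of a loxodromic isometry of an infinite-dimensional Minkowski space and then to transport the conclusions to the circle $\cC_\infty$ using its description inside the Picard--Manin space at infinity. First I would set up the hyperbolic geometry. Since $X_0\neq\G_m^2$, Proposition~\ref{prop:charac-alg-torus} gives $\QAlb(X_0)=0$, so by Corollary~\ref{cor:non-degenerate-intersection-product} the intersection form is non-degenerate of Minkowski type on $\DivInf(X)$ for every cyclic completion $X$; thus $\cNS_{cyc}(X_0)_\R$ is an increasing union of finite-dimensional Minkowski spaces, perfectly paired with $\wNS_{cyc}(X_0)_\R$. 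Because $f$ is an automorphism it is proper, so by Propositions~\ref{prop:functoriality} and~\ref{prop:indeterminacy-point-cycle} it acts on all these spaces and $f_*$, $f^*$ are isometries; the dynamical degree $\lambda=\lambda(f)$ is the spectral radius of $f^*$ here, and $\lambda>1$ says precisely that $f_*$ is a loxodromic isometry of the associated hyperbolic space, with translation length $\log\lambda$.

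Next I would invoke the standard structure theory of loxodromic isometries of hyperbolic spaces. It provides classes $\theta_+,\theta_-$, unique up to positive scalar, with $\theta_\pm^2=0$, $\theta_+\cdot\theta_->0$, $f_*\theta_+=\lambda\theta_+$ and $(f^{-1})_*\theta_-=\lambda\theta_-$; moreover $\theta_+=\lim_n\lambda^{-n}f^n_*H$ and $\theta_-=\lim_n\lambda^{-n}(f^{-1})^n_*H$ in the strong topology, where $H\in\DivInf(X)$ is the boundary-ample class of Theorem~\ref{thm:goodman}, so both $\theta_\pm$ are nef, and for every $Z\in\wNS_{cyc}(X_0)_\R$ with $Z\cdot\theta_-\neq 0$ one has $\lambda^{-n}f^n_*Z\to\tfrac{Z\cdot\theta_-}{\theta_+\cdot\theta_-}\,\theta_+$. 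A short Minkowski-geometry argument (the reversed Cauchy--Schwarz inequality inside the closed positive cone) shows in addition that a nonzero nef isotropic class spans an extreme ray of the nef cone at infinity, and that $\theta_-$ is, up to scalar, the only nef isotropic class orthogonal to $\theta_-$.

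Now I would transport everything to $\cC_\infty$, using the description of $\cC_\infty$ as the set of rays of nef isotropic classes in $\wNS_{cyc}(X_0)_\R$ --- an identification which is $\Aut(X_0)$-equivariant and a homeomorphism for the strong topology. Then $\theta_+,\theta_-$ determine points $v_+,v_-\in\cC_\infty$. Equivariance together with $f_*\theta_+\propto\theta_+$ shows $v_+$ is fixed in $\cC_\infty$, and normalising $v_+$ so that its associated class is exactly $\theta_+$ turns this into $f_*v_+=\lambda v_+$; the same reasoning applied to $f^{-1}$ produces $v_-$ with $(f^{-1})_*v_-=\lambda v_-$. Since $\lambda\neq\lambda^{-1}$ the rays $\theta_+$ and $\theta_-$ are distinct, hence $v_+\neq v_-$, and uniqueness of $v_\pm$ follows from that of $\theta_\pm$. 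For the convergence, let $v\in\cV_\infty$ with $[v]\in\cC_\infty\setminus\{v_-\}$: its class is nef isotropic and not proportional to $\theta_-$, hence $Z_v\cdot\theta_->0$, and the strong convergence above becomes $\lambda^{-n}f^n_*v\to c\,v_+$ with $c=\tfrac{Z_v\cdot\theta_-}{\theta_+\cdot\theta_-}>0$, so in particular $f^n_*[v]\to[v_+]$ in $\hat\cV_\infty$. Finally, if $v_+$ were divisorial, say $v_+=q\,\ord_E$ for a prime divisor $E$, then $f_*v_+=\lambda v_+$ would force $\lambda\,\ord_E(P)=\ord_E(f^*P)\in\Z$ for all $P\in K(X_0)^\times$; choosing $P$ with $\ord_E(P)=1$ gives $\lambda\in\Z$, contradicting that $\lambda\notin\Z$ in the cycle case. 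Hence $v_\pm$ are not divisorial, and since the non-divisorial points of $\cC_\infty$ are exactly the irrational valuations, $v_\pm$ are irrational.

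I expect the main obstacle to be the identification used in the third paragraph: matching the combinatorial circle $\cC_\infty$, built from the dual graphs of cyclic completions, with the isotropic part of the boundary of the nef-at-infinity cone in $\wNS_{cyc}(X_0)_\R$, and verifying that it is equivariant and a homeomorphism for the strong topology. This is precisely where the hypothesis that $X_0$ is completable by a \emph{cycle} enters --- it is what forces this boundary stratum to be a circle rather than something higher-dimensional --- and it is the substantive input (carried out in \cite{abboudDynamicsEndomorphismsAffine2023}); granting it, the rest is the standard dynamics of loxodromic isometries together with the short arithmetic computation giving irrationality.
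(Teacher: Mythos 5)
This theorem is imported from \cite{abboudDynamicsEndomorphismsAffine2023}; the present paper gives no proof of it, so I can only assess your argument on its own terms. Your overall strategy (isometric action on the Picard--Manin space at infinity, isotropic eigenclasses $\theta_\pm$ of the loxodromic isometry, transport to the circle, and the integrality argument for irrationality) is the expected one, and the first, second and last steps are essentially fine. The problem is the bridge in your third paragraph.

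The identification you invoke --- ``$\cC_\infty$ is the set of rays of nef isotropic classes in $\wNS_{cyc}(X_0)_\R$, equivariantly and homeomorphically'' --- is false in the generality of the theorem, and it cannot be the content of the cited reference. By Lemma \ref{lemme:self-intersection-irrational}, $Z_{v_{1,s}}^2=(\hat E+s\hat F)^2-s$ is a quadratic polynomial in $s$ that is not identically zero in general; the paper only proves $Z_v^2=0$ for all $v\in\cC_\infty$ under the \emph{non-elementary} hypothesis (Proposition \ref{prop:vanishing-self-intersection}), and that proof itself uses the very theorem you are proving. For a surface with an elementary automorphism group containing a loxodromic element (e.g.\ the complement of a nodal cubic in $\P^2$, Remark 1.3), the generic point of $\cC_\infty$ has $Z_v^2<0$, so $v\mapsto[Z_v]$ does not map the circle into the isotropic cone, and the isotropic rays do not form a circle. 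Consequently your derivation of existence of $v_\pm$ (``$\theta_\pm$ determine points of $\cC_\infty$''), of uniqueness, and of the convergence $f^n_*[v]\to[v_+]$ all rest on an unavailable --- indeed circular --- premise. What actually has to be proved is the converse bridge: one constructs $v_+$ directly as a valuation, typically as the limit of the normalised sequence $\lambda^{-n}f^n_*\ord_E$ using the combinatorial/monotone structure of the action on the circle (a circle homeomorphism need not have a fixed point, so loxodromy must be used here to force North--South dynamics), and only afterwards identifies $Z_{v_+}$ with the abstract eigenclass $\theta_+$. That construction is the substantive content of \cite{abboudDynamicsEndomorphismsAffine2023}, and your proposal does not supply it nor reduce it to a true statement.
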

We call the valuations $v_{\pm}$ the \emph{eigenvaluations} of $f$ they are well defined up to multiplication by a
positive constant.

\subsubsection{Dual divisors for valuations}\label{subsubsec:dual-divisors-valuations}
Let $X_0$ be any smooth affine surface, by \cite{abboudDynamicsEndomorphismsAffine2023} we have that every $v \in \cV_\infty$ induces a linear form
$L_v : \Cinf \rightarrow \R$ as follows. Let $D \in \Cinf$ be a Cartier class and let $X$ be a cyclic completion where
$D$ is defined. Then, we define $L_v(D)$ as 
\begin{equation}
  L_v (D) := v(h)
  \label{<+label+>}
\end{equation}
where $h$ is a local equation of $D$ at $c_X (v)$. This does not depend on the choice of the local equation or the
choice of the cyclic completion $X$ where $D$ is defined. We then extend $L_v$ to $\Cinf_\R$ by $\R$-linearity. By
duality, this induces a unique Weil class $Z_v \in \Winf_\R$ which satisfies
\begin{equation}
  \forall D \in \Cinf_\R, \quad Z_v \cdot D = L_v (D).
  \label{<+label+>}
\end{equation}
If $v$ is a valuation not centered at infinity, then we set $Z_v= 0$.

\begin{prop}\label{prop:pushforward-divisor-valuation}
  Let $f:X_0 \rightarrow X_0$ be a dominant endomorphism and $v$ be a valuation, then 
  \begin{equation}
    f_* Z_v = Z_{f_* v} + w_v
    \label{<+label+>}
  \end{equation}
  where $w_v \in \Cinf^{\perp} \subset \wNS (X_0)$. Furthermore, if $f$ is proper then $w_v= 0$.
\end{prop}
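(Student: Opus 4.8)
The plan is to reduce the statement to a local computation at the center of $v$, using the fact that the pairing $\Cinf \times \Winf \to \R$ is perfect (Proposition~\ref{prop:embeddings}) so that it suffices to compute $(f_* Z_v) \cdot D$ for every Cartier class $D \in \Cinf_\R$ and compare it with $(Z_{f_* v}) \cdot D$. First I would fix $D \in \Cinf_\R$ defined over a cyclic completion $Y$ of $X_0$, and choose completions $X, Y$ of $X_0$ together with a lift $F : X \to Y$ of $f$ to a regular morphism, which exists after blowing up enough. By definition of the pushforward on Weil classes, $(f_* Z_v) \cdot D = Z_v \cdot f^* D$, and by definition of $L_v$ this equals $v(F^* h)$, where $h$ is a local equation of $D$ at $c_Y(f_* v) = F(c_X(v))$. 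On the other hand $Z_{f_* v} \cdot D = L_{f_* v}(D) = (f_* v)(h) = v(F^* h)$ as well, \emph{provided} $f_* v$ is centered at infinity; so on the nose $(f_* Z_v) \cdot D = (Z_{f_* v}) \cdot D$ whenever $f_* v \in \cV_\infty$. The subtlety, and the reason for the correction term, is that $f_* v$ need not be centered at infinity: if $F(c_X(v)) \in X_0$ then $Z_{f_* v} = 0$ by convention, yet $f_* Z_v$ need not vanish.

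So the core of the argument is to identify the class $w_v := f_* Z_v - Z_{f_* v}$ and show it lies in $\Cinf^\perp \subset \wNS(X_0)$. From the computation above, for every $D \in \Cinf_\R$ we have $w_v \cdot D = v(F^* h) - (f_* v)(h)$ where $h$ is a local equation of $D$ at $F(c_X(v))$; when $f_* v$ is centered at infinity these agree and $w_v \cdot D = 0$, when $f_* v$ is centered at a point of $X_0$ one checks directly that $v(F^* h) = 0$ for a local equation of $D$ at a point of $X_0$ (since $D \in \Cinf$ is supported at infinity, hence $h$ is a unit at any point of $X_0$, so $F^* h$ is a unit at $c_X(v)$ and has $v$-value zero) — again $w_v \cdot D = 0$. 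Hence $w_v \in \Cinf^{\perp}$, which is the first assertion. Here I am using that a Cartier class at infinity, restricted to $X_0$, is trivial, so a local equation of it near any point of $X_0$ is invertible in the local ring.

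For the last assertion, suppose $f$ is proper. Then by the valuative criterion of properness $f_*(\cV_\infty) \subset \cV_\infty$, so $f_* v$ is again centered at infinity, and the computation in the first paragraph gives $w_v \cdot D = 0$ directly; moreover $f_*$ preserves $\Winf_\R$ (Proposition~\ref{prop:functoriality}), so $f_* Z_v \in \Winf_\R$ and likewise $Z_{f_* v} \in \Winf_\R$, whence $w_v \in \Winf_\R$. But an element of $\Winf_\R$ that pairs to zero with all of $\Cinf_\R$ must itself be zero, again by perfectness of the pairing $\Cinf \times \Winf \to \R$ (Proposition~\ref{prop:embeddings}, using $\QAlb(X_0)=0$). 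Therefore $w_v = 0$.

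The main obstacle I anticipate is purely bookkeeping: making precise the assertion ``$L_v(f^* D) = v(F^* h)$ where $h$ is a local equation of $D$ at $c_Y(f_* v)$'', i.e. checking that pulling back a local equation of $D$ at $F(c_X(v))$ under $F$ gives a local equation of $f^* D$ at $c_X(v)$ when $F$ does not contract any curve through $c_X(v)$, and handling the case where $F$ does contract a curve through $c_X(v)$ (which forces a further blow-up but does not change either side, by the independence of $L_v$ from the chosen model). The perfectness of the pairing and the birational invariance of all the spaces involved do the rest; no delicate estimate is needed, only care with centers and local equations.
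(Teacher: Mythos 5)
Your proposal is correct and follows essentially the same route as the paper: pair $w_v = f_*Z_v - Z_{f_*v}$ against an arbitrary $D\in\Cinf$, use the adjunction $f_*Z_v\cdot D = Z_v\cdot f^*D = L_v(f^*D) = L_{f_*v}(D)$ to conclude $w_v\in\Cinf^{\perp}$, and in the proper case use $f_*$-invariance of $\Winf$ together with the perfectness of the pairing to get $w_v\in\Winf\cap\Cinf^{\perp}=\{0\}$. The only difference is that you spell out the local-equation bookkeeping and the case where $f_*v$ is centered in $X_0$ (where $Z_{f_*v}=0$ by convention), which the paper's one-line computation leaves implicit.
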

\begin{proof}
  If $D \in \Cinf$, then 
  \begin{equation}
    f_*Z_v \cdot D = Z_v\cdot f^*D = L_v(f^* D) = L_{f_* v} (D) = Z_{f_* v} \cdot D.
    \label{<+label+>}
  \end{equation}
  So that $w_v = f_* Z_v - Z_{f_*v} \in \Cinf^\perp$.
  If $f$ is proper, then $\Winf$ is $f_*$-invariant so that $w_v \in \Winf \cap \Cinf^\perp = \left\{ 0 \right\}$
  because of the perfect pairing $\Cinf \times \Winf \rightarrow \R$.
\end{proof} 
If $E$ is a prime divisor at infinity on some cyclic completion $X$ of $X_0$, we write $\hat E := Z_{\ord_E}$.

\begin{lemme}\label{lemme:divisorial-valuation-dual-class}
  Let $X$ be a completion of $X_0$ with prime divisors at infinity $E_1, \dots, E_r$, then $\hat E_1, \cdots,
  \hat E_r$ are the Cartier classes defined by the dual basis of $(E_1, \dots, E_r)$ with respect to the intersection
  form. In particular, if $E \in \sD_\infty (X_0)$, then $\hat E$ is a Cartier class defined in any completion $X$ such
  that $E \subset \Delta_X$.
\end{lemme}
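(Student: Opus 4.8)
The plan is to identify $\hat E_i = Z_{\ord_{E_i}}$ directly with the Cartier class $(X,E_i^\vee)\in\Cinf$, where $(E_1^\vee,\dots,E_r^\vee)$ is the dual basis of $(E_1,\dots,E_r)$ inside $\DivInf(X)_\R$ with respect to the intersection form. This dual basis exists and is unique because, under the standing hypothesis $\QAlb(X_0)=0$, the intersection form on $\DivInf(X)_\R$ is non-degenerate by Corollary \ref{cor:non-degenerate-intersection-product}. Since the pairing $\Cinf\times\Winf\to\R$ is perfect and both $Z_{\ord_{E_i}}$ and $(X,E_i^\vee)$ (viewed in $\Winf$ through $\Cinf\hookrightarrow\Winf$) lie in $\Winf$, it suffices to show that they pair identically against every Cartier class $D\in\Cinf$; equivalently, that $L_{\ord_{E_i}}$ and the linear form $D\mapsto (X,E_i^\vee)\cdot D$ agree on $\Cinf$.

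To compare these two linear forms I would evaluate them on a class $D\in\Cinf$ represented on a completion $Y$ of $X_0$ that we may take to dominate $X$, via $\pi\colon Y\to X$, with incarnation $D_Y\in\DivInf(Y)_\R$. The first step is to record the center of $\ord_{E_i}$ on $Y$: the divisorial valuation $\ord_{E_i}$ equals $\ord_{\widetilde E_i}$, where $\widetilde E_i$ is the strict transform of $E_i$ on $Y$, so the valuative criterion of properness gives that $c_Y(\ord_{E_i})$ is the generic point of $\widetilde E_i$. Hence $L_{\ord_{E_i}}(D)=\ord_{\widetilde E_i}(D_Y)$, the coefficient of $D_Y$ along $\widetilde E_i$, and since $\widetilde E_i$ is the unique prime divisor of $Y$ dominating $E_i$, this coefficient equals $\ord_{E_i}(\pi_*D_Y)$. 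On the other side, the projection formula together with the fact that $(X,E_i^\vee)$ is defined on $X$ gives $(X,E_i^\vee)\cdot D = E_i^\vee\cdot\pi_*D_Y$ computed on $X$; writing $\pi_*D_Y=\sum_j a_j E_j\in\DivInf(X)_\R$ (note $\pi_*D_Y$ is supported at infinity because $D\in\Cinf$), this equals $\sum_j a_j(E_i^\vee\cdot E_j)=a_i=\ord_{E_i}(\pi_*D_Y)$ by definition of the dual basis. The two values agree, so the linear forms coincide on all of $\Cinf$, whence $\hat E_i=Z_{\ord_{E_i}}=(X,E_i^\vee)$; the ``in particular'' clause is then immediate by applying this to any completion $X$ with $E\subset\Delta_X$.

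The main obstacle is purely the bookkeeping in the middle step: one must use that ``center'' here is the possibly non-closed scheme point furnished by the valuative criterion, that a divisorial valuation is unchanged under passing to a strict transform, that the coefficient of a pushed-forward boundary divisor along $E_i$ is read off from the strict transform of $E_i$, and that every class in $\Cinf$ has incarnations supported at infinity so that $\pi_*D_Y\in\DivInf(X)_\R$. None of these is deep, but together they are exactly what forces $Z_{\ord_{E_i}}$ to be a Cartier class defined on $X$ rather than merely a Weil class, and what makes the identification with the dual basis go through.
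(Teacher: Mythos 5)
Your proof is correct and follows essentially the same route as the paper's: both identify $\hat E_i=Z_{\ord_{E_i}}$ with the dual basis element of $\DivInf(X)_\R$ by checking that the two classes pair identically against every $D\in\Cinf$ via the chain $L_{\ord_{E_i}}(D)=\ord_{E_i}(D_X)=E_i^\vee\cdot D_X$. You merely spell out the bookkeeping (strict transforms, centers, pushforward of incarnations) that the paper compresses into a single displayed equation.
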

\begin{proof}
  Let $Z_{E_1}, \dots, Z_{E_r}$ be the dual basis of $(E_1, \dots, E_r)$ in $\DivInf(X)_\R$.
  If $D \in \Cinf$ and $E = E_1, \dots, E_r$, then we have 
  \begin{equation}
    \hat E \cdot D = L_{\ord_E} (D) = L_{\ord_E} (D_X) = Z_E \cdot D_X.
    \label{<+label+>}
  \end{equation}
\end{proof}

\begin{lemme}\label{lemme:self-intersection-irrational}
  If $E,F$ are two prime divisors at infinity in $X$ and $p = E \cap F$. For $s > 0$, let $v_{1,s}$ be the monomial
  valuation centered at $p$ with weight $(1,s)$, then we have 
  \begin{equation}
    (Z_{v_{1,s}})_X = \hat E + s \hat F.
    \label{<+label+>}
  \end{equation}
  Furthermore, 
  \begin{equation}
    Z_{v_{1,s}}^2 = (\hat E + s \hat F)^2 - s.
    \label{<+label+>}
  \end{equation}
\end{lemme}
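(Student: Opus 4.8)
The statement to prove is Lemma \ref{lemme:self-intersection-irrational}, which computes the incarnation and the self-intersection of the dual class $Z_{v_{1,s}}$ of a monomial valuation $v_{1,s}$ centered at a normal crossing point $p = E \cap F$. The natural approach is to verify the first identity $(Z_{v_{1,s}})_X = \hat E + s\hat F$ by testing against a basis of Cartier classes, using the defining property $Z_{v_{1,s}} \cdot D = L_{v_{1,s}}(D)$ from the previous subsection, and then to deduce the self-intersection formula from the first identity together with Lemma \ref{lemme:divisorial-valuation-dual-class} (which identifies the $\hat E_i$ with the dual basis of the $E_i$ under the intersection form).

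First I would prove the incarnation formula. Both sides lie in $\DivInf(X)_\R$, so it suffices to check equality after pairing with the Cartier classes $\hat E_i$ coming from a completion $X$ in which $E$ and $F$ are prime divisors at infinity; by Lemma \ref{lemme:divisorial-valuation-dual-class} these are exactly the dual basis, so pairing with them just reads off coefficients, and I only need to compute $Z_{v_{1,s}} \cdot E_i = L_{v_{1,s}}(E_i)$, i.e. $v_{1,s}$ applied to a local equation of $E_i$ at $p$. In the local coordinates $(x,y)$ at $p$ with $\{x=0\}$ defining $E$ and $\{y=0\}$ defining $F$, a local equation of $E$ is $x$, so $v_{1,s}(x) = 1$; a local equation of $F$ is $y$, so $v_{1,s}(y) = s$; and for any other prime divisor at infinity $E_i$ (with $i$ labelling a component not through $p$), a local equation at $p$ is a unit, so $v_{1,s}$ of it is $0$. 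Hence $Z_{v_{1,s}}$ has coefficient $1$ on (the dual class of) $E$, coefficient $s$ on $F$, and $0$ elsewhere in this basis, which is precisely $\hat E + s \hat F$ after translating through Lemma \ref{lemme:divisorial-valuation-dual-class}. One should be slightly careful that $\hat E$ here means the element $Z_{\ord_E}$ of $\Winf_\R$, so strictly the computation shows $(Z_{v_{1,s}})_X = Z_E + s Z_F$ in $\DivInf(X)_\R$ where $Z_E, Z_F$ are the dual-basis vectors, and then Lemma \ref{lemme:divisorial-valuation-dual-class} says these are the incarnations of $\hat E, \hat F$.

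For the self-intersection, I would expand $(\hat E + s\hat F)^2 = \hat E^2 + 2s\,\hat E \cdot \hat F + s^2 \hat F^2$ inside the completion $X$. Since $\hat E, \hat F$ are (incarnations of) the dual basis vectors for $E, F$ with respect to the intersection form, we have $\hat E \cdot E = 1$, $\hat E \cdot F = 0$, and symmetrically for $\hat F$, so $\hat E \cdot \hat F = \hat E \cdot (\text{the component of } \hat F \text{ along the } E_i)$; the clean way is to write the Gram matrix $M = (E_i \cdot E_j)$, note $(\hat E_i) = M^{-1}(E_i)$, so that $\hat E_i \cdot \hat E_j = (M^{-1})_{ij}$. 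Thus $(\hat E + s\hat F)^2 = (M^{-1})_{EE} + 2s (M^{-1})_{EF} + s^2 (M^{-1})_{FF}$. The assertion $Z_{v_{1,s}}^2 = (\hat E + s\hat F)^2 - s$ is then a statement purely about the intersection matrix, and the subtraction of $s$ must reflect the fact that the naive incarnation $Z_E + sZ_F$ computed in $X$ does \emph{not} stay stable under blowing up $p$: blowing up $p$ introduces an exceptional curve $G$ on which both local equations vanish, and the stable (Weil-class) self-intersection differs from the model-$X$ self-intersection by the correction term. Concretely, I would compute $Z_{v_{1,s}}^2$ directly as a limit over the blow-ups realizing $v_{1,s}$ (for rational $s = p/q$, $v_{1,s}$ is divisorial and one can compute $\hat G^2$ for the exceptional $G$; for irrational $s$ one passes to the limit), or — cleaner — use the known formula that for a monomial valuation the self-intersection of its dual class drops by exactly the "extra" weight $\alpha\beta/\gcd$-type quantity, here $s$, relative to the model where $E,F$ meet transversally. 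I expect the main obstacle to be exactly this discrepancy bookkeeping: making precise why the self-intersection in the Weil space $\Winf_\R$ equals the $X$-incarnation square minus $s$, rather than just equalling $(\hat E + s \hat F)^2$. The resolution is that $(\hat E + s\hat F)$, viewed as a Cartier class, has incarnation in $X$ equal to $Z_{v_{1,s}}$'s incarnation in $X$, but $Z_{v_{1,s}}$ as a Weil class has strictly smaller self-pairing because its incarnations in blow-ups of $p$ pick up negative contributions from the new exceptional divisors; the total deficit telescopes to $s$. I would verify the constant $s$ by doing the case $s=1$ (one blow-up of $p$, exceptional $G$ with $v_{1,1} = \ord_G$, and $\hat G^2 = (\hat E + \hat F)^2 - 1$ by a direct three-term computation using $K_Y = \pi^*K_X + G$, $E_Y = \pi^* E - G$, $F_Y = \pi^* F - G$) and then noting the general weight-$(1,s)$ case scales the $F$-direction, giving the claimed $-s$.
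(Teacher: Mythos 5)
Your treatment of the first identity is exactly the paper's: one evaluates $L_{v_{1,s}}$ on local equations at $p$ (getting $1$ on $E$, $s$ on $F$, and $0$ on every other component of $\Delta_X$ because its local equation at $p$ is a unit), and reads off $(Z_{v_{1,s}})_X=\hat E+s\hat F$ through Lemma \ref{lemme:divisorial-valuation-dual-class}. No issues there.

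For the self-intersection formula your structural picture also agrees with the paper's: the paper writes $Z_v=\hat E+s\hat F+Z_{v,X,p}$, where $Z_{v,X,p}$ is the tail supported on the prime divisors lying above $p$, and the $-s$ is exactly the self-intersection of that tail. (One point you leave implicit and should state: the cross term $(\hat E+s\hat F)\cdot Z_{v,X,p}$ vanishes because $\hat E+s\hat F$ is a Cartier class determined in $X$ while $Z_{v,X,p}$ has trivial incarnation in $X$; this is what reduces the claim to computing $Z_{v,X,p}^2$.) The genuine gap is in how you propose to evaluate that tail. Verifying the case $s=1$ and then ``noting the general weight-$(1,s)$ case scales the $F$-direction'' is not an argument: for $s=a/b$ the divisor realizing $v_{1,s}$ is produced by a sequence of blow-ups governed by the continued-fraction expansion of $s$, the tail $Z_{v,X,p}$ is supported on all of the intermediate exceptional divisors, and its self-intersection is not obtained from the $s=1$ computation by any linear rescaling (note also that $Z_{cv}=cZ_v$ scales the square by $c^2$, so a naive scaling heuristic cannot produce a deficit that is linear in $s$). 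What actually closes this step, and what the paper invokes, is the identity $Z_{v,X,p}^2=-\alpha(v_{1,s})$ where $\alpha$ is the Favre--Jonsson skewness of the valuative tree rooted at $\ord_E$, together with $\alpha(v_{1,s})=s$; equivalently, one carries out the full continued-fraction blow-up computation for rational $s$ and passes to the limit for irrational $s$, which is the first alternative you mention but do not execute. As written, the constant $-s$ is asserted rather than proved for general $s$.
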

\begin{proof}
  This is contained in \cite{abboudDynamicsEndomorphismsAffine2023}, we sketch the proof here. Since $p = E \cap F$, we
  have that for every other prime divisors at infinity $G \subset \Delta_{X}, L_v (G) = 0$ because $G$ is given by an
  invertible local equation at $p$. This implies that $Z_{v,X} = \hat E + s \hat F$, so the Weil class $Z_v$ can be
  decomposed as $Z_v = \hat E + s \hat F + Z_{v, X, p}$ where 
  \begin{equation}
    Z_{v, X, p} = \sum_{E \subset \sD_{X,p}} \ord_E (Z_v) W_E
    \label{<+label+>}
  \end{equation}
  with $\sD_{X,p} \subset \sD_\infty (X_0)$ is the set of prime divisors above the point $p$. In
  \cite{abboudDynamicsEndomorphismsAffine2023}, we show that $Z_{v,X,p}^2 = - \alpha (v_{1,s})$ where $\alpha$ is the
  parametrisation of the tree of valuations centered at $p$ rooted at $v_E$ from Favre and Jonsson in \cite{favreValuativeTree2004}
  called skewness and we have that $\alpha (v_{1,s}) = s$.
\end{proof}

\begin{prop}\label{prop:eigenvaluations-divisor}
  Suppose $X_0$ is a smooth affine surface completable by a cycle of rational curves and let $f \in \Aut(X_0)$ be a
  loxodromic automorphism with eigenvaluations $v_\pm$. Then,
  $\theta^\pm := Z_{v_\pm}$ is nef and satisfies
  \begin{equation}
    (\theta^+)^2 = (\theta^-)^2 = 0.
    \label{<+label+>}
  \end{equation}
\end{prop}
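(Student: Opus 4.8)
The plan is to exploit the dynamics from Theorem \ref{thm:dynamics-loxodromic-automorphisms} together with the pushforward formula for dual divisors. Since $v_+$ is the eigenvaluation of $f$ with $f_* v_+ = \lambda(f) v_+$ (up to positive scalar), and $f$ is an automorphism (hence proper), Proposition \ref{prop:pushforward-divisor-valuation} gives $f_* Z_{v_+} = Z_{f_* v_+} = Z_{\lambda v_+} = \lambda Z_{v_+}$, since $v \mapsto Z_v$ is homogeneous of degree one (the linear form $L_v$ scales with $v$). Thus $\theta^+ = Z_{v_+}$ is an eigenvector of $f_*$ acting on $\wNS(X_0)_\R$ with eigenvalue $\lambda = \lambda(f) > 1$. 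The same argument applied to $f^{-1}$ shows $\theta^- = Z_{v_-}$ satisfies $(f^{-1})_* \theta^- = \lambda \theta^-$, i.e. $f_* \theta^- = \lambda^{-1} \theta^-$.

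For the self-intersection, I would argue that $f_*$ preserves the intersection form (it is an isomorphism of Picard–Manin spaces induced by an automorphism, so it is an isometry up to the usual care; here more directly, for an automorphism $f_* = (f^{-1})^*$ and these operators are adjoint for the perfect pairing, preserving intersection numbers). Hence $(\theta^+)^2 = (f_* \theta^+)^2 = \lambda^2 (\theta^+)^2$, and since $\lambda > 1$ this forces $(\theta^+)^2 = 0$; symmetrically $(\theta^-)^2 = 0$. One subtlety: $\theta^+$ lives in $\Winf_\R$, not a priori in a fixed $\DivInf(X)_\R$, so I must make sense of $(\theta^+)^2$ — I would do this by noting $\theta^+$ is a limit of its incarnations $\theta^+_X = Z_{v_+, X}$, which by Lemma \ref{lemme:self-intersection-irrational} are explicitly $\hat E + s \hat F$ for the edge $[v_E, v_F]$ of $\cC_\infty$ containing $v_+$ and the appropriate $s$; the self-intersection in $\wNS$ is the limit (decreasing, by the projection-formula inequality $\alpha^2 \geq (\pi_* \alpha)^2$) of these, and the eigenvector relation passes to this limit.

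For nefness of $\theta^+$: I would use the convergence statement in Theorem \ref{thm:dynamics-loxodromic-automorphisms}, namely $\lambda^{-n} f^n_* v \to c\, v_+$ strongly for $v \in \cC_\infty \setminus \{v_-\}$, which implies $\lambda^{-n} f^n_* Z_v \to c\, \theta^+$ in $\wNS$. Taking $v = v_E$ for a prime divisor $E$ at infinity, the class $Z_{v_E} = \hat E$ is, by Lemma \ref{lemme:divisorial-valuation-dual-class}, the dual basis element, which one checks intersects every curve at infinity nonnegatively after passing to a suitable completion, and more importantly $f^n_* \hat E$ is again such a dual class on the completion $f^n(X)$, hence has nonnegative intersection with every curve at infinity there; in the limit, $\theta^+ \cdot W_C \geq 0$ for every curve $C$ in every cyclic completion. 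Since the classes $W_C$ together with the fact that $\theta^+ \in \Winf_\R$ generate enough of $\cNS$, and nef is a closed condition preserved under the limit, $\theta^+$ is nef. The main obstacle I expect is the bookkeeping around self-intersections of Weil classes: one must justify that $(\theta^+)^2$ is well-defined and that the scaling identity $(\theta^+)^2 = \lambda^2 (\theta^+)^2$ genuinely holds at the level of Weil classes (not just their incarnations), which hinges on the continuity of $f_*$ on $\wNS$ from Proposition \ref{prop:operators} and the compatibility of the intersection pairing with the inverse limit.
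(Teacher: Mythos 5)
The paper gives no proof of this proposition: it is quoted from \cite{abboudDynamicsEndomorphismsAffine2023} (note the ``also proven in'' in the sentence following Proposition \ref{prop:nef-valuation}), so there is nothing internal to compare against. Judged on its own terms, your argument for $(\theta^{\pm})^2=0$ is sound and is surely the intended one: $f$ is an automorphism, hence proper, so Proposition \ref{prop:pushforward-divisor-valuation} gives $f_*Z_{v_+}=Z_{f_*v_+}=\lambda Z_{v_+}$ by homogeneity of $v\mapsto L_v$; finiteness of $(\theta^+)^2$ is guaranteed by Lemma \ref{lemme:self-intersection-irrational} since $v_+$ is a monomial valuation at a satellite point (so the scaling identity cannot be absorbed by a value $-\infty$); and $f_*$ preserving self-intersections of Weil classes for birational maps is the standard fact from \cite{boucksomDegreeGrowthMeromorphic2008}. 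You are right to flag these as the points needing care, and they all go through.

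The nefness argument, however, has a genuine gap. You only verify that $f^n_*\hat E$, and hence the limit $\theta^+$, pairs nonnegatively with curves \emph{at infinity}; nefness requires nonnegativity against all curves, including those meeting $X_0$, and the dual class $\hat E$ is in general \emph{not} nef: writing $\hat E=\sum_j a_{ij}E_j$ with $(a_{ij})$ the inverse of the intersection matrix of $\Delta_X$ (which has signature $(1,r-1)$), the coefficients can be negative, so $\hat E$ need not even be effective and can meet a curve of $X_0$ negatively. Indeed Proposition \ref{prop:nef-valuation} says $Z_v$ is nef if and only if $Z_v^2\geq 0$, which fails for $\hat E=Z_{\ord_E}$ in a general completion, so your claim that the approximants are ``dual classes, hence'' nonnegative against enough classes does not yield nefness in the limit. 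The fix is immediate and makes this whole paragraph unnecessary: once you know $(\theta^{\pm})^2=0$, Proposition \ref{prop:nef-valuation} gives nefness of $Z_{v_{\pm}}$ directly. So the proposition is correct and your main computation is the right one, but the nefness step should be replaced by an appeal to that criterion rather than the limit-of-dual-classes argument.
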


We end this section with the following result which gives a simple criteria to detect whether $Z_v$ is nef, also proven
in \cite{abboudDynamicsEndomorphismsAffine2023}.

\begin{prop}\label{prop:nef-valuation}
  Let $v \in \cV_\infty$, we have the following equivalence. 
  \begin{enumerate}
    \item $Z_v$ is nef. 
    \item $Z_v^2 \geq 0$
    \item $Z_v \geq 0$, i.e for every $E \in \sD_\infty (X_0), \ord_E (Z_v) \geq 0$.
    \end{enumerate}
\end{prop}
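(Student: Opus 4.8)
The plan is to prove the cycle of implications $(1)\Rightarrow(2)\Rightarrow(3)\Rightarrow(1)$, working inside $\Winf_\R$ with the perfect pairing against $\Cinf_\R$ and the Hodge-index-type structure supplied by Corollary~\ref{cor:non-degenerate-intersection-product}. For $(1)\Rightarrow(2)$: if $Z_v$ is nef, then in particular $Z_v\cdot Z_v\ge 0$, since $Z_v$ is itself a limit of classes of the form (incarnations of) nef divisors at infinity, and nef classes pair non-negatively with each other; concretely, $Z_v^2 = \lim_X (Z_v)_X^2$ and one checks $(Z_v)_X$ is nef in each cyclic completion $X$ so $(Z_v)_X^2\ge 0$ by the usual fact for projective surfaces. (One must be a little careful: nef here should mean $Z_v\cdot W_E\ge 0$ for all $E\in\sD_\infty(X_0)$, equivalently $L_v(\hat E)\ge 0$, so this step is close to tautological once one unwinds the definitions.)

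The heart of the argument is $(2)\Rightarrow(3)$. Here I would use that the intersection form on $\DivInf(X)_\R$ is non-degenerate of Minkowski (signature $(1,n-1)$) type, by Corollary~\ref{cor:non-degenerate-intersection-product}, together with Goodman's theorem (Theorem~\ref{thm:goodman}) giving an ample class $H\in\DivInf(X)_\R$ with $H^2>0$. Fix a cyclic completion $X$ where we test the coefficients, write $\alpha=(Z_v)_X\in\DivInf(X)_\R$, and suppose $\alpha^2\ge 0$ but some coefficient $\ord_{E_i}(\alpha)=\alpha\cdot Z_{E_i}<0$. The idea is the standard one for the positive cone of a Lorentzian lattice: the set $\{\beta:\beta^2\ge 0,\ \beta\cdot H\ge 0\}$ is a convex cone, and any $\beta$ in it satisfies $\beta\cdot \gamma\ge 0$ for every effective $\gamma$ — in particular for $\gamma=E_i$ — because otherwise the plane spanned by $\beta$ and $E_i$ would contain a negative-definite $2$-plane inside the positive cone, contradicting $E_i^2$ being what it is in a cycle (components of a cycle of rational curves have $E_i^2$ computable via adjunction, and $E_i$ moves in the effective cone near the ample $H$). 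More robustly, I would argue: since $\alpha=Z_v$ with $v$ centered at infinity, $\alpha\cdot H = L_v(H) = v(h)$ where $h$ is a local equation of the effective ample divisor $H$ at $c_X(v)$, hence $\alpha\cdot H\ge 0$; now if $\alpha^2\ge 0$ and $\alpha\cdot H> 0$ then $\alpha$ lies in the closed positive cone, and a class in the closed positive cone of a Minkowski space pairs non-negatively with every class $E$ satisfying $E\cdot H>0$ and $(E - tH)^2 <0$ for small $t>0$ — which covers all the boundary components — giving $\ord_{E_i}(\alpha)\ge 0$. The degenerate case $\alpha\cdot H = 0$ forces $\alpha=0$ by non-degeneracy plus the Hodge index theorem, so it is harmless.

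Finally $(3)\Rightarrow(1)$ is essentially a definition-chase: if $\ord_E(Z_v)\ge 0$ for all $E\in\sD_\infty(X_0)$, then for any nef Cartier class $D\in\Cinf_\R$ defined on a cyclic completion $X$ we have $Z_v\cdot D = \sum_{E\subset\Delta_X}\ord_E(Z_v)\,(W_E\cdot D)\ge 0$ since each $W_E\cdot D = D_X\cdot E\ge 0$ by nefness of $D_X$; but one actually wants nefness of $Z_v$ as a Weil class, i.e.\ $Z_v\cdot D\ge 0$ for all nef $D\in\Cinf_\R$, and since the effective cone at infinity is generated (in each model) by the boundary components $E$, whose associated classes are exactly the $W_E$, this is precisely the condition $Z_v\cdot W_E=\ord_E(Z_v)\ge 0$ in every model, which is~(3). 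I expect the main obstacle to be making the Minkowski-cone argument in $(2)\Rightarrow(3)$ clean at the level of the direct limit $\Cinf_\R$ rather than in a single $\NS(X)_\R$: one should check the sign $\ord_{E}(Z_v)\ge0$ for \emph{every} $E\in\sD_\infty(X_0)$, not just those in one fixed $X$, but this follows by applying the single-model statement in a cyclic completion $Y$ with $E\subset\Delta_Y$ and invoking compatibility of $Z_v$ under pullback (Proposition~\ref{prop:operators}) together with the fact that pullback of an ample-at-infinity class stays ample on the relevant part, or more simply by directly re-running the Goodman--Hodge-index argument in $Y$.
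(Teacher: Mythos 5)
The paper itself gives no proof of this proposition: it is quoted verbatim from \cite{abboudDynamicsEndomorphismsAffine2023} (``also proven in \dots''), so there is no in-text argument to compare against and your attempt has to be judged on its own. The easy implications are essentially fine: $(1)\Rightarrow(2)$ holds because $Z_v^2=\inf_X (Z_v)_X^2$ and each incarnation of a nef Weil class is nef, and $(3)\Rightarrow(1)$ works because the incarnations of $Z_v$ are then effective and supported on $\Delta_X$, while $Z_v\cdot E=L_v(E)=v(\text{local equation of }E)\ge 0$ automatically for boundary components. The problem is $(2)\Rightarrow(3)$, which is the only nontrivial implication, and your argument for it rests on a false general fact about Lorentzian cones. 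A class in the closed positive cone does \emph{not} pair nonnegatively with every effective class, nor with every class $\beta$ satisfying $\beta\cdot H>0$ and $\beta^2<0$: on the blow-up of $\P^2$ at a point, $\alpha=3H+2E$ has $\alpha^2=5>0$ and $\alpha\cdot H>0$ but $\alpha\cdot E=-2$. ``Lying in the positive cone'' is (pseudo-)effectivity-type information and is strictly weaker than nefness; the entire content of the proposition is that for classes of the special form $Z_v$ the two notions coincide, so any proof that only uses $\alpha^2\ge0$ and $\alpha\cdot H>0$ is proving something false.

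There is also a conflation in the same step: the quantity to control is the coefficient $\ord_{E_i}(\alpha)=\alpha\cdot\hat E_i$, where $\hat E_i=Z_{\ord_{E_i}}$ is the \emph{dual} basis of Lemma \ref{lemme:divisorial-valuation-dual-class}, not the intersection number $\alpha\cdot E_i$; your ``pairs nonnegatively with every effective $\gamma$, in particular $\gamma=E_i$'' tests the wrong class. The Cauchy--Schwarz fact that two classes in the same closed positive half-cone pair nonnegatively would settle it only if one knew $\hat E^2\ge0$ for every $E\in\sD_\infty(X_0)$ --- but that is false in general (blow up satellite points repeatedly and compute with Lemma \ref{lemme:self-intersection-irrational}: the self-intersections $\hat G^2$ of the new dual classes become negative), and indeed $\hat E^2=0$ is precisely what Proposition \ref{prop:vanishing-self-intersection} has to work to establish, and only under the non-elementary hypothesis. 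So $(2)\Rightarrow(3)$ genuinely requires the valuative structure of $Z_v$ --- a skewness-type formula for $Z_v\cdot Z_w$ and the tree/cycle geometry of $\hat\cV_\infty$ as in \cite{favreValuativeTree2004} and \cite{abboudDynamicsEndomorphismsAffine2023} --- and cannot be extracted from Corollary \ref{cor:non-degenerate-intersection-product} and Goodman's theorem alone.
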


Finally, we state a result from \cite{abboudIntersectionOrbitsLoxodromic2024} that states that two loxodromic automorphisms
of a smooth affine surface $X_0$ cannot share an eigenvaluation unless they have a common iterate except when $X_0$ is the
algebraic torus $\G_m^2$.
\begin{prop}[\cite{abboudIntersectionOrbitsLoxodromic2024}]\label{prop:different-eigenvaluations}
  Suppose $X_0$ is a smooth affine surface not isomorphic to $\G_m^2$ and $f,g \in \Aut(X_0)$ are two loxodromic
automorphisms, if $f$ and $g$ do not share a common iterate, then the eigenvaluations $v_{\pm}^f$ and $v_{\pm}^g$ are
  all distinct.
\end{prop}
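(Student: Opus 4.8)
The plan is to reduce to the case of a shared \emph{attracting} eigenvaluation and then study the subgroup of $\Aut(X_0)$ that preserves the corresponding ray in $\Winf$. Since replacing $f$ or $g$ by its inverse interchanges the two eigenvaluations of that automorphism, and since having a common iterate is insensitive to such replacements, I may assume $f$ and $g$ share their attracting eigenvaluation, i.e. $v := v_+^f = v_+^g$ as a point of $\cC_\infty$; the goal is then to show that $f$ and $g$ have a common iterate, contradicting the hypothesis. Put $\theta := Z_v = \theta_f^+ = \theta_g^+$, which is nef with $\theta^2 = 0$ by Proposition~\ref{prop:eigenvaluations-divisor}. As automorphisms are proper, Proposition~\ref{prop:pushforward-divisor-valuation} together with the homogeneity $Z_{cv} = cZ_v$ yields $f_*\theta = \lambda(f)\theta$ and $g_*\theta = \lambda(g)\theta$. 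I then introduce $H := \{\,h \in \Aut(X_0)\ :\ h_*\theta \in \R_{>0}\theta\,\}$, a subgroup of $\Aut(X_0)$ containing $f$ and $g$, and the multiplicative character $\chi\colon H \to \R_{>0}$ defined by $h_*\theta = \chi(h)\theta$.

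The first step is a dichotomy: for $h \in H$ one has $\chi(h) > 1$ if and only if $h$ is loxodromic with $v_+^h = v$, and then $\chi(h) = \lambda(h)$. Indeed, from $Z_{h_*v} = h_*Z_v = \chi(h)Z_v = Z_{\chi(h)v}$ and the injectivity of $v \mapsto Z_v$ on $\cC_\infty$, the homeomorphism induced by $h$ on $\cC_\infty$ fixes $v$; and if $\chi(h) > 1$, the relation $h^n_*\theta = \chi(h)^n\theta$ with $\theta$ nef and nonzero forces the dynamical degree of $h$ to be at least $\chi(h)$, so $h$ is loxodromic, hence fixes exactly $v_\pm^h$ on $\cC_\infty$ by Theorem~\ref{thm:dynamics-loxodromic-automorphisms}; since $\chi(h) = \lambda(h)^{-1} < 1$ would hold if $v = v_-^h$, one gets $v = v_+^h$ and $\chi(h) = \lambda(h)$. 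Consequently $\chi(H) \cap (1,\infty)$ consists only of dynamical degrees of loxodromic automorphisms of $X_0$, which by Proposition~\ref{prop:charac-cycle} are quadratic integers and hence bounded below by an absolute constant $\lambda_{\min} > 1$ (one may take $\lambda_{\min} = \tfrac{1+\sqrt5}{2}$). A subgroup of $(\R_{>0},\cdot)$ disjoint from $(1,\lambda_{\min})$ is discrete, hence cyclic; as it contains $\lambda(f)$, it is infinite cyclic, $\chi(H) = \langle\mu\rangle$ with $\mu > 1$. Writing $\lambda(f) = \mu^a$, $\lambda(g) = \mu^b$ with $a,b \geq 1$, the element $h_0 := f^b g^{-a}$ then lies in $\ker\chi$.

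It remains to control $\ker\chi$ and conclude. By the dichotomy, an element of $\ker\chi$ is non-loxodromic yet fixes the irrational valuation $v \in \cC_\infty$; such an automorphism is either of finite order and acts trivially on $\cC_\infty$ (a finite-order homeomorphism of a circle with a fixed point is the identity), or it is parabolic with $v$ as its unique fixed point on $\cC_\infty$. This is exactly where the hypothesis $X_0 \neq \G_m^2$ enters, and this is the step I expect to be the main obstacle: one must show that, because $X_0$ has no nonconstant invertible regular function (equivalently $X_0 \neq \G_m^2$, by Proposition~\ref{prop:charac-alg-torus}), a cycle‑completable affine surface admits no parabolic automorphism sharing a fixed point on $\cC_\infty$ with a loxodromic one — for $\G_m^2$ such a configuration does occur, realised by a hyperbolic element of $\GL_2(\Z)$ together with a translation, which is precisely what makes $\G_m^2$ exceptional. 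Granting that $\ker\chi \cap \langle f,g\rangle$ contains no parabolic element, it is a torsion subgroup acting trivially on $\cC_\infty$, and a further argument controlling this finite‑type part (for instance via its action on a fixed cyclic completion) shows $\langle f,g\rangle$ to be virtually cyclic; since $f$ and $g$ are of infinite order this forces $f^b$ and $g^a = h_0^{-1}f^b$, and hence $f$ and $g$, to have a common iterate, which is the desired contradiction.
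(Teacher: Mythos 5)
The paper does not actually prove this proposition; it imports it from \cite{abboudIntersectionOrbitsLoxodromic2024}, so there is no in-text argument to compare yours against. Judged on its own terms, your strategy (reduce to a shared attracting eigenvaluation, pass to the stabiliser $H$ of the ray $\R_{>0}\theta$, use the character $\chi$ and a discreteness argument, then analyse $\ker\chi$) is a sensible Tits-alternative-style plan, and the first dichotomy ($\chi(h)>1$ iff $h$ is loxodromic with $v_+^h=v$, in which case $\chi(h)=\lambda(h)$) is correct. But the proof is not complete: the step you yourself flag and then ``grant'' --- that a non-loxodromic automorphism cannot fix the irrational valuation $v$ on $\cC_\infty$ unless it is of finite order, when $X_0\neq\G_m^2$ --- is precisely the content of the proposition. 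Everything before it is soft linear algebra; all of the geometric input (why $\G_m^2$ is exceptional, why the absence of nonconstant invertible functions kills the parabolic configuration) is concentrated in the step you assume. The closing deduction (``a further argument \dots shows $\langle f,g\rangle$ to be virtually cyclic'') likewise asserts rather than proves that the torsion part of $\ker\chi$ is finite and that this yields a common iterate of $f$ and $g$ rather than merely of $f^b$ and $g^a$ up to torsion.

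There is also a genuine error in the discreteness step. From Proposition \ref{prop:charac-cycle} you only know that $\lambda(h)$ is a quadratic \emph{integer}, and quadratic integers $>1$ do accumulate at $1$: for instance $\lambda_T=\tfrac{1}{2}\bigl(T-\sqrt{T^2-4T}\bigr)$ is a root of $x^2-Tx+T$ and tends to $1$ as $T\to\infty$. The lower bound $\tfrac{1+\sqrt5}{2}$ holds for \emph{reciprocal} quadratic integers (quadratic units whose conjugate is $\pm\lambda^{-1}$), which is what dynamical degrees of automorphisms actually are, but that is an extra fact you would need to state and justify (e.g.\ via the integrality and reciprocity of the characteristic polynomial of the induced action on a suitable lattice); without it, $\chi(H)$ could a priori be a dense subgroup of $\R_{>0}$ and the reduction to $h_0=f^bg^{-a}\in\ker\chi$ collapses. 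So the proposal identifies the right pressure points but leaves both the arithmetic gap and the central geometric step unproved.
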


\section{Proof of Theorem \ref{thm:charac-Markov}}\label{sec:}
Let $X_0$ be a smooth affine surface completable by a cycle of rational curves and assume that $\Aut (X_0)$ is not elementary.
\begin{prop}\label{prop:vanishing-self-intersection}
  For any $[v] \in \cC_\infty$, we have $Z_v^2 = 0$.
\end{prop}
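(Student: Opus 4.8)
The plan is to exploit the non-elementary hypothesis to produce ``enough'' eigenvaluations that the set of $[v] \in \cC_\infty$ with $Z_v^2 = 0$ must be all of $\cC_\infty$. First I would recall from Proposition \ref{prop:eigenvaluations-divisor} that for any loxodromic $f \in \Aut(X_0)$ the eigenvaluations $v_\pm^f$ satisfy $(Z_{v_\pm^f})^2 = 0$. Since $\Aut(X_0)$ is non-elementary, it contains two loxodromic automorphisms with no common iterate; moreover, by a standard ping-pong argument on the circle $\cC_\infty$ (the action on $\cC_\infty$ being by homeomorphisms, with each loxodromic element having a north--south dynamics by Theorem \ref{thm:dynamics-loxodromic-automorphisms}), the group generated contains \emph{infinitely many} loxodromic elements with pairwise distinct attracting and repelling fixed points. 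By Proposition \ref{prop:different-eigenvaluations} (using $X_0 \neq \G_m^2$, which holds here since the $\G_m^2$ case is ruled out once $X_0$ has no nonconstant units --- or one simply treats $X_0 = \G_m^2$ separately where the statement is checked directly), all these eigenvaluations are distinct. Hence the set $S := \{[v] \in \cC_\infty : Z_v^2 = 0\}$ is infinite, and in fact it accumulates everywhere: the attracting fixed points of conjugates $g f g^{-1}$ of a fixed loxodromic $f$, as $g$ ranges over the group, are dense in $\cC_\infty$ because the action is minimal on its limit set and the limit set is all of $\cC_\infty$ (a non-elementary group acting on a circle with north--south dynamics elements has limit set equal to the whole circle).

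Next I would show that $S$ is closed in $\cC_\infty$ for the usual topology on the circle, which by the previous paragraph forces $S = \cC_\infty$. The key tool is Lemma \ref{lemme:self-intersection-irrational}: on a segment $[v_E, v_F]$ coming from an intersection point $p = E \cap F$ of a cyclic completion $X$, the valuation $v_{1,s}$ has $Z_{v_{1,s}}^2 = (\hat E + s\hat F)^2 - s$, which is a continuous (indeed polynomial) function of $s \in (0,\infty)$. More generally, along any such elementary segment the function $[v] \mapsto Z_v^2$ is continuous. Since $\cC_\infty$ is covered by countably many such segments and the strong topology restricts to the usual circle topology, $[v] \mapsto Z_v^2$ is continuous on $\cC_\infty$, so $S = \{Z_v^2 = 0\}$ is closed. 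Combined with density of $S$, we conclude $S = \cC_\infty$, i.e.\ $Z_v^2 = 0$ for every $[v] \in \cC_\infty$.

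The main obstacle I anticipate is making the density statement rigorous: one must verify that the action of the non-elementary group $\Aut(X_0)$ on the circle $\cC_\infty$ genuinely has a dense set of attracting fixed points of loxodromic elements. This requires knowing that $\cC_\infty$ is the entire limit set --- equivalently that the action is ``minimal enough.'' I would establish this by a ping-pong/Tits-alternative style argument: given two loxodromic $f, g$ with the four fixed points $v_\pm^f, v_\pm^g$ pairwise distinct on the circle, the fixed points of the words $f^n g^m$ for large $n, m$ sweep out arcs arbitrarily close to any prescribed point, using the north--south dynamics of each generator from Theorem \ref{thm:dynamics-loxodromic-automorphisms}. An alternative, perhaps cleaner, route avoiding density: show directly that if $Z_v^2 = 0$ on a dense set then equality propagates by a maximum-principle-type argument using that $Z_v$ is always nef (Proposition \ref{prop:nef-valuation}) when $Z_v^2 \geq 0$ and that the intersection form on $\Cinf$ is of Minkowski type (Corollary \ref{cor:non-degenerate-intersection-product}), so that the ``light cone'' structure constrains the continuous function $Z_v^2$; but the continuity-plus-density argument above seems the most direct.
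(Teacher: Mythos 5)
Your overall strategy (closedness of $\{Z_v^2=0\}$ plus density of eigenvaluations) is genuinely different from the paper's, and it has a real gap at the density step. The assertion you rely on --- that a non-elementary group of circle homeomorphisms whose loxodromic elements have north--south dynamics must have limit set equal to the whole circle --- is false in general: a Schottky (free) subgroup of $\PSL_2(\R)$ acting on $\partial\HH^2\simeq S^1$ is non-elementary, every nontrivial element acts with north--south dynamics, and yet the limit set is a Cantor set. The closure of the set of attracting fixed points of loxodromic elements is exactly the limit set, so your argument only yields $Z_v^2=0$ on a closed invariant subset that could a priori be nowhere dense. You also cannot appeal to special features of Markov surfaces to argue that the limit set is everything, since at this point in the paper it is not yet known that $X_0$ is such a surface, and I see no easy direct proof that the action on $\cC_\infty$ is minimal. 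Your sketched ``alternative route'' via a maximum principle is too vague to close this hole.

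The paper's proof needs only the two loxodromic elements $f,g$ and exploits the polynomial structure that you use merely for continuity. On the segment of monomial valuations at $p=E\cap F=c_Y(v_f^+)$, the function $\phi(s)=Z_{v_{1,s}}^2=(\hat E+s\hat F)^2-s$ is a polynomial of degree at most $2$. Since $f^n_*[v_g^+]\to[v_f^+]$ by Theorem \ref{thm:dynamics-loxodromic-automorphisms}, for $n$ large the valuations $f^n_*v_g^+$ are, up to scaling, of the form $v_{1,s_n}$ with $s_n\to s_0$, $s_n\neq s_0$, and $\phi(s_n)=Z_{v_g^+}^2=0$ by $f_*$-invariance of the self-intersection; a polynomial of degree at most $2$ with infinitely many zeros vanishes identically, so $Z_v^2=0$ on the whole segment. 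The north--south dynamics of $f$ alone then pushes every $[v]\in\cC_\infty\setminus\{[v_f^-]\}$ into that segment after finitely many iterates, which finishes the proof. The missing idea in your write-up is thus to use the identity theorem for polynomials on a single segment to propagate the vanishing, rather than trying to make the zero set dense. (Your parenthetical separation of the case $X_0=\G_m^2$ is indeed necessary, since Theorem \ref{thm:dynamics-loxodromic-automorphisms} and Proposition \ref{prop:different-eigenvaluations} exclude it; the paper disposes of that case earlier via Proposition \ref{prop:charac-alg-torus}.)
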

\begin{proof}
  Let $f,g $ be two loxodromic automorphisms generating a non-elementary subgroup. Write $v_f^\pm, v_g^\pm$ for the
  eigenvaluations of $f$ and $g$ respectively. Recall that they are all different by Proposition
  \ref{prop:different-eigenvaluations}. From Proposition \ref{prop:eigenvaluations-divisor}, we have that 
  \begin{equation}
    Z_{v^\pm_f}^2 = Z_{v^\pm_g}^2 = 0.
    \label{<+label+>}
  \end{equation}
  Let $Y$ be a cyclic completion of $X_0$ such that the centers of these four eigenvaluations are distinct. Recall that
  they are irrational valuations so their centers is always a satellite point. Let
  $p = E \cap F = c_Y (v_f^+)$. We look at the monomial valuations centered at $p$. The function 
  \begin{equation}
    \phi: s \in [0, +\infty) \mapsto Z_{v_{1,s}}^2 
    \label{<+label+>}
  \end{equation}
  is a polynomial map of $s$ of degree $\leq 2$ by Lemma \ref{lemme:self-intersection-irrational}. We have that $v_f^+ = v_{1, s_0}$ for some $s_0 > 0$. Now, we know
  that $f^n_* [v_g^+] \rightarrow [v_f^+]$ by Theorem \ref{thm:dynamics-loxodromic-automorphisms}. This implies that after a finite number of steps we have up to
  renormalisation
  \begin{equation}
    f^n_* v_g^+ = v_{1, s_n}
    \label{<+label+>}
  \end{equation}
for some $s_n > 0$ with $s_n \neq s_0$ and $s_n \rightarrow s_0$. But since $Z_{v_g^+}^2 = 0$ and this is invariant by the action of $f$
we have that $Z_{v_{1,s_n}}^2 = 0$ so the function $\phi$ is zero. Since for any valuation $[v] \in \cC_\infty \setminus
\left\{ [v_f^-] \right\}, f^n_* [v] \rightarrow [v_f^+]$ we have that $Z_v^2 = 0$.
\end{proof}
This implies by Proposition \ref{prop:nef-valuation} that for every $v \in \cC_\infty, Z_v$ is nef.
\begin{cor}\label{cor:intersection-positive}
  If $E,F$ are different prime divisors at infinity of a cyclic completion $X$ of $X_0$, then 
  \begin{equation}
    \hat E \cdot \hat F > 0.
    \label{<+label+>}
  \end{equation}
\end{cor}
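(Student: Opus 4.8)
The plan is to exploit Lemma \ref{lemme:self-intersection-irrational}, which computes the self-intersection of the monomial valuation $v_{1,s}$ centered at the point $p = E \cap F$ as $Z_{v_{1,s}}^2 = (\hat E + s\hat F)^2 - s$. By Proposition \ref{prop:vanishing-self-intersection} every class $Z_v$ for $[v] \in \cC_\infty$ has vanishing self-intersection, and the segment $]v_E, v_F[$ of $\cC_\infty$ is precisely $\{v_{1,s} : s \in (0,\infty)\}$. Hence for all $s > 0$ we have $(\hat E + s\hat F)^2 = s$, i.e. $\hat E^2 + 2s(\hat E \cdot \hat F) + s^2 \hat F^2 = s$. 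This is a polynomial identity in $s$ valid on an interval, so I would match coefficients: $\hat F^2 = 0$, $\hat E^2 = 0$, and $2(\hat E \cdot \hat F) = 1$. In particular $\hat E \cdot \hat F = \tfrac12 > 0$ when $E, F$ are adjacent in the cycle.

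The remaining case is when $E, F$ are prime divisors at infinity of the cyclic completion $X$ that are \emph{not} adjacent in the cycle $\Delta_X$. Here I would use the fact, just established, that $\hat G^2 = 0$ for every prime divisor at infinity $G$ of any cyclic completion (apply the adjacency computation to $G$ together with one of its neighbours). By Proposition \ref{prop:nef-valuation}, $Z_{\ord_E} = \hat E$ and $Z_{\ord_F} = \hat F$ are nef. Now consider the nef class $\hat E + \hat F$; by the Hodge index theorem on $X$ (the intersection form on $\DivInf(X)$ is non-degenerate of Minkowski type by Corollary \ref{cor:non-degenerate-intersection-product}, since $\QAlb(X_0) = 0$ as $X_0$ admits loxodromic automorphisms and $X_0 \neq \G_m^2$ — or one treats $\G_m^2$ separately and notes it is not completable by a cycle with non-elementary automorphism group, though in fact here we just need $\QAlb(X_0)=0$ which holds in the relevant case), we get $(\hat E + \hat F)^2 = 2\hat E \cdot \hat F \geq 0$, so $\hat E \cdot \hat F \geq 0$. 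To upgrade to strict positivity, suppose $\hat E \cdot \hat F = 0$. Then $\hat E$ and $\hat F$ are both isotropic and orthogonal, so they span an isotropic subspace of dimension $2$ of $\DivInf(X)_\R$ with its Minkowski-type form — impossible, since a Minkowski form has isotropic subspaces of dimension at most $1$. This forces $\hat E \cdot \hat F > 0$.

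The main obstacle, and the only place requiring care, is justifying that $\DivInf(X)$ carries an intersection form of Minkowski type so that the Hodge-index/isotropic-subspace argument applies; this is exactly the content of Corollary \ref{cor:non-degenerate-intersection-product}, which needs $\QAlb(X_0) = 0$. Since $\Aut(X_0)$ is non-elementary and $X_0$ is completable by a cycle of rational curves, $X_0 \not\cong \G_m^2$ (the torus has elementary, in fact virtually abelian, automorphism group in the relevant sense; alternatively invoke Proposition \ref{prop:charac-alg-torus}), so $\QAlb(X_0) = 0$ and the corollary applies. One subtlety is that $E$ and $F$ are adjacent in the cycle iff $E \cdot F = 1$ in $X$ (they meet in a single point), and I should note that the argument in the first paragraph only used that $p = E \cap F$ is a point lying on both, which is automatic for adjacent components of a simple normal crossing cycle; the non-adjacent case genuinely needs the second paragraph's Hodge-theoretic argument since there is no monomial valuation "between" $E$ and $F$ on $\cC_\infty$.
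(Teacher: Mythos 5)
Your core argument is correct and, in its second paragraph, is essentially the paper's proof: $\hat E^2=\hat F^2=0$ by Proposition \ref{prop:vanishing-self-intersection}, hence $\hat E,\hat F$ are nef and effective by Proposition \ref{prop:nef-valuation}, giving $\hat E\cdot\hat F\ge 0$, and the Hodge index theorem rules out $\hat E\cdot\hat F=0$ because two orthogonal isotropic classes in a Minkowski-type lattice must be proportional, contradicting that they are distinct members of a dual basis. Two remarks. First, your case split is unnecessary: the adjacency of $E$ and $F$ plays no role in that argument, so the first paragraph (coefficient-matching in $(\hat E+s\hat F)^2=s$, which correctly yields $\hat E\cdot\hat F=\tfrac12$ for adjacent components) is a nice consistency check but can be dropped. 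Second, your parenthetical claim that $\G_m^2$ has an elementary automorphism group is false --- $\Aut(\G_m^2)$ contains $\GL_2(\Z)$ acting monomially and is non-elementary; the correct way to secure $\QAlb(X_0)=0$ here is that Section 5 works throughout under the standing hypothesis $X_0\neq\G_m^2$ (the torus being handled as the other branch of Theorem \ref{thm:charac-Markov}), so Proposition \ref{prop:charac-alg-torus} applies. This does not affect the validity of your proof, since you only use the conclusion $\QAlb(X_0)=0$.
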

\begin{proof}
  We have by Proposition \ref{prop:vanishing-self-intersection} that $(\hat E)^2 = (\hat F)^2 = 0$ so that they are nef
  and effective divisors by Proposition \ref{prop:nef-valuation}. This implies that $\hat E \cdot \hat F \geq 0$.
   If the intersection number is zero, then by the Hodge index theorem we must have that $\hat E = \hat F$ but this is a
   contradiction.
\end{proof}

\begin{prop}\label{prop:trivial-k-plus-delta}
  Let $X_0$ be a smooth affine surface completable by a cycle of rational curves with a non-elementary automorphism
  group, then the class $K+\Delta \in \cNS_{cyc} (X_0)$ is equal to $0$.
\end{prop}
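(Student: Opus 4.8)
The plan is to show that $K+\Delta$, viewed as a Cartier class in $\cNS_{cyc}(X_0)$, is orthogonal to everything and hence zero by the perfect pairing. First I would recall from Proposition \ref{prop:Cartier-canonical-class} that $K+\Delta$ is fixed by $\Aut(X_0)$, so in particular it is fixed by any loxodromic $f$ with eigenvaluations $v_f^\pm$. The strategy is to pair $K+\Delta$ against the nef classes $\theta^\pm=Z_{v_f^\pm}$ coming from Proposition \ref{prop:eigenvaluations-divisor}, and against the classes $\hat E$ for $E\in\sD_\infty(X_0)$, using Proposition \ref{prop:vanishing-self-intersection} (which gives $Z_v^2=0$ on all of $\cC_\infty$) together with the dynamics from Theorem \ref{thm:dynamics-loxodromic-automorphisms}.

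The key steps, in order: (1) Fix a cyclic completion $X$ and write $(K+\Delta)_X=K_X+\Delta_X$. By the adjunction formula on each component $E_i$ of the cycle $\Delta_X$, we have $(K_X+\Delta_X)\cdot E_i=0$: indeed $E_i$ is a smooth rational curve with $K_X\cdot E_i=-2-E_i^2$, and $\Delta_X\cdot E_i = E_i^2 + (E_{i-1}\cdot E_i)+(E_{i+1}\cdot E_i)=E_i^2+2$, so the sum vanishes. Hence $(K+\Delta)\cdot W_{E_i}=0$ for every boundary component in every cyclic completion, i.e. $(K+\Delta)\cdot W_E = 0$ for all $E\in\sD_\infty(X_0)$. (2) By the result recalled just before this proposition (or directly by Proposition \ref{prop:nef-valuation}), every $Z_v$ with $[v]\in\cC_\infty$ is nef; since every $\hat E = Z_{\ord_E}$ and every monomial valuation class lies in $\cC_\infty$, step (1) already computes $(K+\Delta)\cdot Z_v$ on the divisorial points. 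To extend to irrational $v$ one uses continuity of $s\mapsto (K+\Delta)\cdot Z_{v_{1,s}}$ — this is an affine (degree $\le 1$) function of $s$ by Lemma \ref{lemme:self-intersection-irrational}, and it vanishes at $s\to 0^+$ and $s\to+\infty$ (the endpoints being $\hat E$ and $\hat F$), hence is identically zero. So $(K+\Delta)\cdot Z_v=0$ for all $[v]\in\cC_\infty$. (3) Now $K+\Delta$ is a Cartier class; write it in a cyclic completion $X$ as $(K+\Delta)_X=\sum a_i E_i$. Pairing against $\hat E_j = Z_{\ord_{E_j}}$ and using Lemma \ref{lemme:divisorial-valuation-dual-class}, which identifies the $\hat E_j$ with the dual basis of $(E_1,\dots,E_r)$ with respect to the intersection form, we get $a_j = (K+\Delta)_X\cdot \hat E_j = (K+\Delta)\cdot Z_{\ord_{E_j}}=0$ for all $j$. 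Hence $(K+\Delta)_X=0$ in $\DivInf(X)_\R$, and since $K+\Delta$ is determined by its incarnation in any cyclic completion (Proposition \ref{prop:Cartier-canonical-class}), we conclude $K+\Delta=0$ in $\cNS_{cyc}(X_0)$.

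The main obstacle is being careful that the adjunction computation in step (1) really gives $(K_X+\Delta_X)\cdot E_i = 0$ for \emph{every} component of the cycle regardless of the self-intersections $E_i^2$: this is exactly where the cyclic (as opposed to tree) structure of the boundary is used, since each $E_i$ meets the rest of $\Delta_X$ in exactly two points. Everything else is bookkeeping: the non-elementary hypothesis enters only through Proposition \ref{prop:vanishing-self-intersection} to guarantee nefness of the $Z_v$'s, and the duality argument in step (3) is the routine translation from "orthogonal to all boundary divisor classes" to "zero as a Cartier class." One subtlety worth stating explicitly is that $\hat E_j$ is a genuine Cartier class defined already in $X$ (Lemma \ref{lemme:divisorial-valuation-dual-class}), so the pairing $(K+\Delta)_X\cdot\hat E_j$ may be computed entirely within $\NS(X)_\R$.
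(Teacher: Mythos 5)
Your step (1) is correct, and it is in fact a cleaner route to the key orthogonality than the one the paper takes: adjunction on each smooth rational component of the cycle gives $(K_X+\Delta_X)\cdot E_i=(-2-E_i^2)+(E_i^2+2)=0$ directly, with no dynamics, whereas the paper derives the vanishing of $Z_v\cdot(K+\Delta)$ from the $\Aut(X_0)$-invariance of $K+\Delta$, the eigenvalue relation $Z_{v_h^\pm}\cdot(K+\Delta)=\lambda(h^{\pm1})\,Z_{v_h^\pm}\cdot(K+\Delta)$, and the density of $f$-orbits on $\cC_\infty$. Two small points about your steps (1)--(2): what the adjunction computation gives is $(K+\Delta)\cdot W_{E_j}$, the pairing with the curve classes $E_j$, not with the dual classes $\hat E_j=Z_{\ord_{E_j}}$; since both families span $\DivInf(X)_\R$ and the intersection form is non-degenerate there, orthogonality to one family is equivalent to orthogonality to the other, so the conflation is harmless but should be made explicit. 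Note also that, as written, your argument never actually uses the non-elementary hypothesis or the nefness of the $Z_v$'s.

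The genuine gap is in step (3). You write $(K+\Delta)_X=\sum_i a_iE_i$, i.e.\ you assume that the class $K_X+\Delta_X$ lies in $\DivInf(X)_\R$. There is no a priori reason for this: $K_X$ is the canonical class of $X$ and lives in $\NS(X)_\R$, and the assertion $K_X+\Delta_X=0$ in $\NS(X)_\R$ is precisely what must be proved. What steps (1)--(2) actually give is only that $K_X+\Delta_X\in\DivInf(X)_\R^{\perp}$ inside $\NS(X)_\R$; since $\DivInf(X)_\R$ contains an ample class by Goodman's theorem, its orthogonal complement is negative definite by the Hodge index theorem, so at this point one can only conclude that either $K_X+\Delta_X=0$ or $(K_X+\Delta_X)^2<0$. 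Excluding the second possibility requires a positivity input on $K_X+\Delta_X$ itself, and this is exactly the paper's final step: since $X_0$ is completable by a cycle and carries a loxodromic automorphism, $\overline\kappa(X_0)=0$ (equivalently $h^0(K_X+\Delta_X)=e(\Delta_X)=1$ by the lemma of Section 2), so $K_X+\Delta_X$ is pseudo-effective, and a nonzero pseudo-effective class has positive intersection with the Goodman ample divisor $H$ supported on $\Delta_X$, contradicting the orthogonality. With that step inserted your argument is complete and, for the orthogonality part, genuinely more elementary than the paper's dynamical one.
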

\begin{proof}
Recall by Proposition \ref{prop:Cartier-canonical-class} that $(K+ \Delta) \in \Cartier_{cyc}(X_0)_\R$ and it is defined
  by $K_X + \Delta_X$ for any cyclic completion $X$ of $X_0$. Furthermore, it is fixed by $\Aut (X_0)$. We show that
  \begin{equation}
    \forall v \in \cC_\infty, \quad Z_v \cdot (K+\Delta) = 0.
  \end{equation}
  Let $f,g \in \Aut (X_0)$ be two loxodromic automorphisms not sharing a common iterate with their eigenvaluations
  $v_h^\pm$ for $h = f,g$. We have that 
  \begin{equation}
    Z_{v_h^\pm} \cdot (K+\Delta) = (h^{\pm 1})^* \left( Z_{v_h^\pm} \cdot (K+\Delta) \right) = \lambda \left(h^{\pm 1}\right) Z_{v_h^\pm}
    \cdot (K+\Delta).
    \label{<+label+>}
  \end{equation}
  This implies that $Z_{v_h^\pm} \cdot (K+\Delta) = 0$. Now, let $X$ be a cyclic completion of $X_0$ and let $p = E \cap F = c_X (v_f^+)$. The function 
  \begin{equation}
    \phi: s > 0 \mapsto Z_{v_{1,s}} \cdot (K+\Delta)
    \label{<+label+>}
  \end{equation}
  is a polynomial of degree at most 1 because we have 
  \begin{equation}
    Z_{v_{1,s}} \cdot (K+\Delta) = (Z_{v_{1,s}})_X \cdot (K+\Delta)_X = (\hat E + s \hat F) \cdot (K_X + \Delta_X).
    \label{<+label+>}
  \end{equation}
  Now we have that $v_f^+ = v_{1,s_0}$ for some $s_0 > 0$. As in the proof of Proposition
  \ref{prop:vanishing-self-intersection}, for $n$ large enough, $f^n_* v_g^+ = v_{1,s_n}$ and we have $\phi(s_n) = 0$.
  So that the function $\phi$ is 0. Now, for any $[v] \in \cC_\infty \setminus \left\{ [v_{-,f}] \right\}$ we have that
  $f^n_* [v] \rightarrow [v_{+, f}]$ so it will belong to $[v_E, v_F[$ after finitely many iterations. So that we get
  $Z_v \cdot (K+\Delta) = (f^n)_* (Z_v \cdot (K+\Delta)) = Z_{f^n_* v} \cdot (K+\Delta) = 0 $.

  This implies that $K+\Delta$ is orthogonal to $\Cartier_{cyc} (X_0)_\R$. But by Proposition
  \ref{prop:charac-cycle} we have $\overline \kappa (X_0) = 0$ so that 
  $K_X + \Delta_X$ is pseudo-effective. Let $H$ be an ample divisor supported over $\Delta_X$ which exists by Goodman's
  result, if $K_X + \Delta_X \neq 0$, then $H \cdot (K_X + \Delta_X) > 0$ by
  ampleness of $H$ and this is a contradiction.
\end{proof}

In the sense of Gizatullin in \cite{gizatullinInvariantsIncompleteAlgebraic1971}, a minimal cyclic completion $X$ of
$X_0$ is a smooth completion such that $\Delta_X$ is (1) a cycle of rational curves or (2) a single rational curve with a nodal
singularity and such that if $\Delta_X$ contains a curve of self-intersection $-1$, its contraction $X \rightarrow Y$
would yield a completion of $X_0$ that does not satisfy neither (1) or (2). We can always contract $-1$-curves at
infinity in a cyclic completion to get to a minimal one.

\begin{cor}\label{cor:completion-triangle-at-infinity}
  There exists a completion $X$ of $X_0$ such that $\Delta_X$ is a triangle of three $(-1)$-curves.
\end{cor}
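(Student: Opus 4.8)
The plan is to start from a minimal cyclic completion $X$ of $X_0$ — obtained by contracting all $(-1)$-curves at infinity in some cyclic completion — and argue that $\Delta_X$ must be a cycle of exactly three curves, each of self-intersection $-1$. First I would record the consequences of the previous results: by Proposition \ref{prop:trivial-k-plus-delta} we have $K_X + \Delta_X = 0$ in $\cNS_{cyc}(X_0)$, and by Corollary \ref{cor:intersection-positive} the dual classes $\hat E_i$ of the boundary components satisfy $\hat E_i^2 = 0$ and $\hat E_i \cdot \hat E_j > 0$ for $i \neq j$. The key numerical input is the relation $K_X + \Delta_X = 0$: writing $\Delta_X = E_1 + \cdots + E_r$ as a cycle (each $E_i \cong \P^1$, $E_i \cdot E_{i+1} = 1$ cyclically), the adjunction formula gives $E_i \cdot (K_X + \Delta_X) = E_i^2 + K_X \cdot E_i + \sum_{j \neq i} E_i \cdot E_j = E_i^2 + (-2 - E_i^2) + 2 = 0$ automatically, so this alone is not enough; I need to use minimality together with the positivity/nefness coming from the circle at infinity.

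The main step is to bound $r$ and the self-intersections. I would use Goodman's theorem (Theorem \ref{thm:goodman}): there is an ample effective divisor $H$ supported on $\Delta_X$. Since each $\hat E_i$ is nef (Proposition \ref{prop:nef-valuation}, using $\hat E_i^2 = 0$) and the $\hat E_i$ span $\DivInf(X)_\R$, the intersection form restricted to $\DivInf(X)$ in the basis $(\hat E_1,\dots,\hat E_r)$ is a symmetric matrix with zero diagonal and strictly positive off-diagonal entries. By Corollary \ref{cor:non-degenerate-intersection-product} this form is non-degenerate of Minkowski type, i.e. has exactly one positive eigenvalue. A symmetric matrix with zero diagonal and positive off-diagonal entries of size $r$ has signature constrained: its trace is $0$, so the sum of eigenvalues is $0$; having exactly one positive eigenvalue forces $r - 1$ negative ones, which is consistent, but the Perron–Frobenius eigenvector (all positive) gives the positive eigenvalue, and the existence of an ample $H = \sum a_i \hat E_i$ with $H^2 > 0$ and $H \cdot \hat E_i > 0$ constrains things further. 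I would then translate back: the original basis $(E_1,\dots,E_r)$ has intersection matrix $M$ which is the cycle matrix with $E_i^2$ on the diagonal; its inverse (up to the Gram relation $\hat E_i = \sum (M^{-1})_{ij} E_j$) must have the zero-diagonal positive-off-diagonal shape. A direct computation with the circulant-type cycle matrix shows this pins down $r = 3$ and $E_1^2 = E_2^2 = E_3^2 = -1$: for a cycle of length $r$ with self-intersections $a_i$, minimality forbids $a_i = -1$ unless contracting breaks the cycle structure, and the condition that $\hat E_i^2 = 0$ for all $i$ (equivalently $(M^{-1})_{ii} = 0$) is extremely restrictive. Checking small cases, the only cycle of rational curves whose dual basis has vanishing self-intersections is the triangle of $(-1)$-curves (the anticanonical triangle on a cubic surface's hyperplane section), while $\G_m^2$ is excluded since it is the length-one or length-two degenerate cases already handled by hypothesis ($X_0 \neq \G_m^2$ is not literally assumed, but $\QAlb(X_0) = 0$ should be forced here — I'd need to confirm the non-torus case is the one we are in, otherwise invoke Proposition \ref{prop:charac-alg-torus}).

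The hard part will be the combinatorial/linear-algebra classification: showing that among all cycles of rational curves with $K_X + \Delta_X = 0$, the simultaneous conditions "$\hat E_i^2 = 0$ for every $i$" and "there exists an ample divisor supported at infinity" and "$X$ is minimal" force the triangle of $(-1)$-curves. I expect to handle this by writing the cycle intersection matrix explicitly, computing the diagonal of its inverse in closed form (it will be a ratio of products of the continued-fraction-type data attached to the chains $E_i$), and showing these diagonal entries cannot all vanish unless $r = 3$ and all $a_i = -1$; the self-intersection $-1$ on each of the three curves then says precisely that $\Delta_X$ is a triangle of $(-1)$-curves, which is the statement. I would double-check that such a configuration is genuinely realizable and minimal in Gizatullin's sense (it is: it cannot be contracted without destroying the SNC cycle property since contracting one $(-1)$-curve in a triangle creates a curve with a node or a non-SNC point), completing the proof.
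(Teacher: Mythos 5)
There is a genuine gap, and it sits exactly where you flagged ``the hard part.'' Your plan is to show that a \emph{minimal} cyclic completion must itself be a triangle of three $(-1)$-curves, by proving that the only cycle intersection matrix whose inverse has vanishing diagonal is the one attached to the triangle. Both halves of this are false. First, the condition $\hat E_i^2=0$ for all $i$ holds in \emph{every} cyclic completion of $X_0$ --- that is precisely Proposition \ref{prop:vanishing-self-intersection}, which applies to all divisorial valuations on $\cC_\infty$ --- so it cannot single out one completion; concretely, the $2$-cycle consisting of two smooth rational curves with $E^2=F^2=0$ meeting at two points has intersection matrix $\left(\begin{smallmatrix}0&2\\2&0\end{smallmatrix}\right)$, whose inverse also has zero diagonal, and any blow-up of the triangle at a satellite point gives further examples. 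Second, the triangle of $(-1)$-curves is \emph{not} minimal in Gizatullin's sense: contracting one side produces exactly that $2$-cycle of $0$-curves, which is still an SNC cycle of rational curves, contrary to your parenthetical claim that contraction destroys the cycle property. So your classification would, if carried out correctly, terminate at the wrong configuration, and the linear-algebra criterion you propose to drive it is not discriminating.

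The paper's proof uses a different and essential input that your plan omits: Gizatullin's theorem that a minimal completion $Y$ must contain a boundary component $E$ with $E^2\geq 0$. From there it rules out the irreducible nodal case and the case $E^2>0$ using Goodman's theorem, the Hodge index theorem and Proposition \ref{prop:vanishing-self-intersection}, then shows $\Delta_Y=E\cup F$ with $E^2=F^2=0$ by an argument with the effective decomposition of $\hat F$ and Corollary \ref{cor:intersection-positive}, and only \emph{then} obtains the triangle by blowing up one of the two intersection points of $E$ and $F$. That final blow-up step is the actual source of the three $(-1)$-curves and is absent from your proposal. To repair your argument you would need to (i) import Gizatullin's non-negativity result for minimal completions, (ii) aim the classification at the $2$-cycle of $0$-curves rather than the triangle, and (iii) add the concluding blow-up.
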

\begin{proof}
  By \cite{gizatullinInvariantsIncompleteAlgebraic1971}, we have that a minimal completion $Y$ of $X_0$ must contain a curve $E$ such that
  $E^2 \geq 0$.
  If $\Delta_Y = E$ is a nodal curve, then by Goodman's theorem $E$ is ample and $E^2 > 0$ then $\hat E = c E$ where $c > 0$ and $\hat
  E^2 > 0$ which contradicts Proposition \ref{prop:vanishing-self-intersection}.

  So we have that $\Delta_Y$ is reducible. If $E^2 > 0$, then we have that $\hat F \cdot E = 0$ with $F \subset
  \Delta_Y$ distinct from $E$. By the Hodge index theorem this implies that $\hat F^2 < 0$ which also contradicts
  Proposition \ref{prop:vanishing-self-intersection}. So we have that $E^2 = 0$.

  Let $F$ be another irreducible component of $\Delta_Y$ intersecting $E$ we show that $\Delta_Y  = E \cup F$ and $F^2 =
  0$. Indeed otherwise by Corollary \ref{cor:intersection-positive} we have that $\hat G \cdot \hat F > 0$ for every
  $G \subset \Delta_Y, G \neq F$ and $\hat F^2 = 0$ by Propostion \ref{prop:vanishing-self-intersection}. This implies
  that  
  \begin{equation}
    \hat F = a E + R
    \label{<+label+>}
  \end{equation}
  with $a > 0$ and $R$ an effective divisor such that $\Supp R = \Delta_Y \setminus E \cup F$. Now intersecting with $E$ we get 
  \begin{equation}
    0 = E \cdot R.
    \label{<+label+>}
  \end{equation}
  But this is impossible since $E$ must intersect the support of the effective divisor $R$ because $\Delta_Y$ is a cycle
  of rational curves.

  So we have that $\Delta_Y = E \cup F$ with $E^2 = 0$. Since by Proposition \ref{prop:vanishing-self-intersection} we
  have $\hat E^2 = 0$ this implies that $\hat E = aF$ with $a > 0$. Since $0 = F \cdot \hat E = a F^2$ we have that
  $F^2 = 0$. Blowing up one of the two intersection points of $E$ and $F$ we get the desired completion $X$.
\end{proof}

\begin{thm}\label{thm:cubic-surface}
  The completion $X$ from Corollary \ref{cor:completion-triangle-at-infinity} is a smooth cubic surface and the
  anticanonical divisor $-K_X$ yields an embedding $X \hookrightarrow \P^3$ such that $\Delta_X$ is a triangle of lines.
  In particular, $X_0$ is a cubic affine surface of Markov type.
\end{thm}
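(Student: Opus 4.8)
The plan is to show that the completion $X$ with $\Delta_X$ a triangle of three $(-1)$-curves is a smooth cubic surface with $-K_X$ very ample and mapping $\Delta_X$ to a triangle of lines. First I would exploit Proposition \ref{prop:trivial-k-plus-delta}: since $K + \Delta = 0$ in $\cNS_{cyc}(X_0)$, its incarnation in the completion $X$ of Corollary \ref{cor:completion-triangle-at-infinity} vanishes, i.e. $K_X + \Delta_X = 0$ in $\NS(X)_\R$. Since $X$ is rational, $\NS(X) = \Pic(X)$ has no torsion and numerical and linear equivalence agree up to the obstruction from $h^1(\OO_X)$, which vanishes; so in fact $-K_X = \Delta_X = C_1 + C_2 + C_3$ as divisor classes, where each $C_i^2 = -1$ and $C_i \cdot C_j = 1$ for $i \neq j$ (the triangle).

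Next I would compute the numerical invariants of $X$. From $-K_X = \Delta_X$ we get $K_X^2 = \Delta_X^2 = \sum C_i^2 + 2\sum_{i<j} C_i \cdot C_j = -3 + 6 = 3$. Since $X$ is a smooth rational surface with $K_X^2 = 3$ and (as follows from loxodromic dynamics forcing $\overline\kappa \le 0$ and the structure of the boundary) $-K_X$ effective, one should check $X$ is a del Pezzo surface of degree $3$: it suffices to show $-K_X$ is ample. For this I would argue that $-K_X \cdot \hat E > 0$ for the relevant classes and, more directly, that any irreducible curve $D$ on $X$ has $-K_X \cdot D > 0$. Curves at infinity: $-K_X \cdot C_i = \Delta_X \cdot C_i = C_i^2 + \sum_{j \neq i} C_i \cdot C_j = -1 + 2 = 1 > 0$. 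For a curve $D \subset X_0$, one uses that $X_0$ is affine so $D$ meets $\Delta_X$, hence $D \cdot \Delta_X > 0$, giving $-K_X \cdot D > 0$; by the Nakai–Moishezon criterion together with $K_X^2 = 3 > 0$, $-K_X$ is ample. Then $X$ is a cubic del Pezzo surface, and the classical theory of del Pezzo surfaces of degree $3$ gives that $|-K_X|$ is very ample and embeds $X$ as a smooth cubic surface in $\P^3$, with lines on $X$ being exactly the $(-1)$-curves. The three boundary curves $C_i$ are $(-1)$-curves meeting pairwise in one point, so their images are three lines forming a triangle, and $X_0 = X \setminus \Delta_X$ is by definition a cubic affine surface of Markov type.

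The main obstacle I expect is the passage from the numerical identity $K_X + \Delta_X \equiv 0$ to ampleness of $-K_X$, and more precisely ruling out that $-K_X$ is merely nef-and-big but not ample — e.g. if $X$ were a weak del Pezzo with some $(-2)$-curve. Here the fact that \emph{all} irreducible curves at infinity are $(-1)$-curves is crucial, and I would need to verify there are no $(-2)$-curves inside $X_0$ either; this uses that $X_0$ is affine (Goodman's theorem, Theorem \ref{thm:goodman}: $\Delta_X$ supports an ample divisor, so every curve in $X_0$ has strictly positive intersection with $\Delta_X = -K_X$). Once Nakai–Moishezon applies, the rest is the standard classification of cubic del Pezzo surfaces, which I would simply cite (e.g. from the work of El Huti \cite{el-hutiCubicSurfacesMarkov1974} on cubic surfaces of Markov type, which also supplies the description of $\Aut(X_0)$ via the three involutions). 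A secondary subtlety is checking that the three $(-1)$-curves really do form a triangle rather than, say, a chain — but this is built into Corollary \ref{cor:completion-triangle-at-infinity}, since $\Delta_X$ is a cycle of rational curves of length $3$.
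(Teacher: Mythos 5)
Your proposal is correct and follows essentially the same route as the paper: deduce $K_X=-\Delta_X$ from Proposition \ref{prop:trivial-k-plus-delta}, prove ampleness of $\Delta_X$ via Nakai--Moishezon using Goodman's theorem to handle curves meeting $X_0$, compute $K_X^2=3$, and invoke the classical theory of degree-$3$ del Pezzo surfaces for the very ampleness of $-K_X$ and the triangle of lines. Your additional remarks (torsion-freeness of $\Pic$ of a rational surface to pass from numerical to linear equivalence, and the explicit exclusion of $(-2)$-curves) are harmless elaborations of steps the paper leaves implicit; note also that the paper's ``degree 6'' is a typo for degree $3$, which you state correctly.
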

\begin{proof}
  By proposition \ref{prop:trivial-k-plus-delta} we have that $K_X = - \Delta_X$. So we need to show that
  $H = \Delta_X$ is ample. By the Nakai-Moishezon criterion this is equivalent to showing that $H \cdot C > 0$ for any
  curve in $X$. If $C$ is one of the three curves
  in $\Delta_X$, then $H \cdot C = 1$ and if $C$ is a curve contained in $X_0$, then since $\Delta_X$ must
  support an ample divisor by Goodman's result, we have that $C$ must intersect the support of $\Delta_X$. Therefore
  $H \cdot C > 0$ and $H$ is ample. This implies that $X$ is a smooth Del Pezzo surface with $K_X^2 = \Delta_X^2 = 3$.
  By classical theory we have that $X$ is a smooth Del Pezzo surface of degree 6 and $-K_X$ is very ample and yields an
  embedding $X \hookrightarrow \P^3$ realising $X$ as a smooth cubic surface in $\P^3$ and $\Delta_X$ is a triangle of
  lines in $\P^3$ since $\Delta_X \cdot C = 1$ for the three curves $C$ contained in $\Delta_X$.
\end{proof}

\section{Endomorphisms}\label{sec:endomorphisms}
\subsection{Nonproper endomorphisms}\label{subsec:non-proper}

\begin{prop}\label{prop:non-proper-curves}
  Let $X_0$ be an affine surface and let $f : X_0 \rightarrow X_0$ be a non-proper dominant endomorphism. There exists
  finitely many $C_1, \dots, C_r \in \sD_\infty (X_0)$ such that $f(C_i)$ is a curve inside $X_0$. For any other
  $E \in \sD_\infty (X_0)$, either $f_* \ord_E$ is centered at infinity or at a closed point in $X_0$.
  In the second case we have furthermore that the Weil class $W_E$ satisfies $f_* W_E \in \Cinf^\perp$.
\end{prop}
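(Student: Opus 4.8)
The plan is to fix once and for all a completion of $X_0$ on which $f$ becomes a morphism, and to read each assertion off the behaviour of that morphism on prime divisors. Choose completions $X$ and $Y_1$ of $X_0$ together with a regular morphism $F_1 : Y_1 \to X$ extending $f$ (obtained by resolving the indeterminacies of the rational map from any completion of $X_0$ to $X$). Given a prime divisor at infinity $E \in \sD_\infty(X_0)$, pick a completion $Y$ of $X_0$ on which $E$ is a prime divisor and which dominates $Y_1$, and let $F : Y \to X$ be $F_1$ composed with $Y \to Y_1$, again a regular morphism extending $f$. The elementary fact underlying everything is that the center of the valuation $f_*\ord_E$ on $X$ is the point $F(\eta_E)$, where $\eta_E$ is the generic point of $E$: indeed $f_*\ord_E(g) = \ord_E(F^* g)$ for $g \in K(X_0)$, and since $F$ extends $f$ the local homomorphism $\OO_{X, F(\eta_E)} \to \OO_{Y, \eta_E}$ induced by $F$ is local, with $\OO_{Y, \eta_E}$ the valuation ring of $\ord_E$; hence $f_*\ord_E \geq 0$ on $\OO_{X, F(\eta_E)}$ and is positive on its maximal ideal, which pins down the center by uniqueness. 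Moreover, if $F(E)$ is a curve then $f_*\ord_E$ is a positive multiple of the divisorial valuation along that curve. As $F(\eta_E)$ is a closed point or the generic point of a curve in $X$, this gives a trichotomy: $f_*\ord_E$ is centered at infinity exactly when $F(E) \subseteq \Delta_X$; it is a positive multiple of $\ord_C$ for a curve $C \subset X_0$ exactly when $F(E)$ is a curve meeting $X_0$ --- which is what ``$f(E)$ is a curve inside $X_0$'' should mean; and it is centered at a closed point of $X_0$ exactly when $F(E)$ is a point of $X_0$.

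Next I would establish the finiteness. If $f(E)$ is a curve inside $X_0$, so $F(E)$ is a curve, then $E$ must be a component of $\Delta_{Y_1}$: otherwise $Y \to Y_1$ contracts $E$ to a point $p \in \Delta_{Y_1}$, hence $F = F_1 \circ (Y \to Y_1)$ contracts $E$ to the point $F_1(p)$, contradicting that $F(E)$ is a curve. As $\Delta_{Y_1}$ has finitely many components, only finitely many $E \in \sD_\infty(X_0)$ have this property, and these are the $C_1, \dots, C_r$. (Non-properness plays no role here, and $r = 0$ is possible --- for instance if $f$ contracts every exceptional divisor at infinity to a point of $X_0$.) If $E \in \sD_\infty(X_0)$ is distinct from all the $C_i$, the same contraction argument shows $F(E)$ is not a curve meeting $X_0$, so $F(E)$ is a point or a curve contained in $\Delta_X$; by the trichotomy this says exactly that $f_*\ord_E$ is centered at infinity, except when $F(E)$ is a point $q \in X_0$, in which case it is centered at the closed point $q$. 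This is the asserted dichotomy.

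It remains to prove the orthogonality statement. Suppose $E \neq C_i$ and $f_*\ord_E$ is centered at a closed point $q \in X_0$; in particular $f_*\ord_E \geq 0$ on $K[X_0]$. Take any $D \in \Cinf$, defined on a completion $X_D$ of $X_0$, and choose a completion $Y'$ of $X_0$ on which $E$ lives and to which $f$ extends as a regular morphism $F' : Y' \to X_D$. By the first paragraph the center of $f_*\ord_E$ on $X_D$ is $F'(\eta_E)$; since $f_*\ord_E \geq 0$ on $K[X_0]$ this center lies in the affine part $X_0$, where it equals $q$, so $F'$ contracts $E$ to $q$ and $F'_* E = 0$ in $\NS(X_D)_\R$. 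Using $f_* W_E \cdot D = W_E \cdot f^* D$ (as in the proof of Proposition~\ref{prop:pushforward-divisor-valuation}), the incarnation $E$ of $W_E$ on $Y'$, the incarnation $(F')^*(D_{X_D})$ of $f^* D$ on $Y'$, and the projection formula, we obtain
\[
  f_* W_E \cdot D \;=\; E \cdot (F')^*(D_{X_D}) \;=\; F'_* E \cdot D_{X_D} \;=\; 0 .
\]
Since $D$ was an arbitrary element of $\Cinf$, this gives $f_* W_E \in \Cinf^\perp$.

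The only genuine subtlety is checking that ``$f(E)$ is a curve inside $X_0$'' is intrinsic to $E$, independent of the auxiliary model used to make $f$ regular; this is guaranteed by the intrinsic description of $f_*\ord_E$ in the first paragraph together with the fact that any completion carrying $E$ may be assumed to dominate the fixed $Y_1$, so that whether $F$ contracts $E$ is governed solely by whether $E$ appears on $Y_1$. Everything else is routine bookkeeping with the pullback and pushforward operators on Picard-Manin classes and the projection formula.
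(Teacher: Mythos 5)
Your proposal is correct and follows essentially the same route as the paper: fix a model on which $f$ becomes a regular morphism $F$, define the $C_i$ as the finitely many boundary components of that model whose $F$-image is a curve in $X_0$, observe that any other $E$ is either a boundary component mapping to a point or into $\Delta_X$, or is contracted by the map down to that model, and deduce the orthogonality statement from the vanishing of the pushforward of a contracted divisor (via the projection formula). Your added care about the intrinsic meaning of ``$f(E)$ is a curve in $X_0$'' and the explicit identification $c_X(f_*\ord_E)=F(\eta_E)$ are welcome refinements of the same argument.
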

\begin{proof}
  Let $X$ be a completion of $X_0$ and let $Y$ be a completion above $X$ such that the lift $F : Y \rightarrow X$ of $f$
  is regular. Let $C_1, \dots, C_r$ be the prime divisors at infinity in $Y$ such that $F(C_i) = C_i'$ is a curve in
  $X_0$. We claim that they are the desired prime divisors at infinity. Indeed, let $E \in \sD_{\infty} (X_0)$ and
  assume that $f_* \ord_E$ is not centered at infinity and that $E \neq C_1, \dots, C_r$. We have that 
  \begin{equation}
    c_X (f_* \ord_E) = F (c_Y (\ord_E)).
    \label{<+label+>}
  \end{equation}
  If $E$ is a prime divisor at infinity in $Y$ then by definition of $C_1, \dots, C_r$ its image must be a closed point
  in $X_0$. Otherwise, $c_Y (\ord_E)$ is a closed point in $Y$ but then its image by $F$ must also be a closed point.

  If $f_*(\ord_E)$ is centered at a closed point in $X_0$. Let $Y$ be any completion of $X_0$ and let $Z$ be a
  completion of $X_0$ such that $E$ is a prime divisor at infinity in $Z$ and $F : Z \rightarrow Y$ is a regular lift of
  $f$. In particular, $F(E)$ is a closed point in $X_0$. Then, for any $D \in \DivInf (Y)$, 
  \begin{equation}
    f_* W_E \cdot D = F_* W_{E,Z} \cdot D = 0.
    \label{<+label+>}
  \end{equation}
  Since this holds for any completion $Y$ we have that $f_* W_E \in \Cinf^\perp$.
\end{proof}
We call $C_1, \dots, C_r$ the \emph{nonproper} divisors of $f$. Write $d_i = \deg (f : C_i \rightarrow f(C_i))$. We
denote by $W_{f(C_i)}$ the Weil class induced by $f(C_i)$.

\begin{cor}\label{cor:pullback-divisors-valuation}
  Let $f : X_0 \rightarrow X_0$ be a dominant endomorphisms with nonproper curves $C_1, \dots, C_r$.
  Let $v \in \cV_\infty$ be a divisorial valuation then 
  \begin{equation}
    f^* Z_v = \sum_{f_*w = v} a_w Z_w + \sum_{i=1}^r d_i (W_{f(C_i)} \cdot Z_v) \hat C_i
    \label{eq:pullback}
  \end{equation}
  for some $a_w > 0$ and integers $d_i >0$.
\end{cor}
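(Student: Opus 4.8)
The plan is to compute $f^*Z_v$ by pairing it against an arbitrary Cartier class $D \in \Cinf$ and using the defining property $f^*Z_v \cdot D = Z_v \cdot f_*D$, together with the decomposition of $f_*D$ into a divisor-at-infinity part and a "horizontal" part coming from the nonproper curves of $f$. First I would fix a completion $X$ of $X_0$ and a completion $Y$ above $X$ such that the lift $F : Y \to X$ of $f$ is regular and $E$ appears as a prime divisor at infinity in $Y$, where $Z_v = \hat E$; this is possible by Lemma \ref{lemme:divisorial-valuation-dual-class}. Then for $D \in \DivInf(X)$, the class $F^*D$ is an honest Cartier class on $Y$, and its incarnation decomposes along the prime divisors of $Y$: those at infinity (contributing to $\Cinf$), the strict transforms of the nonproper curves $C_1,\dots,C_r$ (which $F$ maps onto curves $f(C_i)$ inside $X_0$), and other curves contracted by $F$ or mapped into $X_0$. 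Pairing with $\hat E$ kills everything except the coefficient of $E$ in $F^*D$ and the contributions of those $C_i$ that pass through $c_Y(\ord_E)$; but $\hat E$ is the dual basis vector, so $\hat E \cdot (\text{incarnation of } F^*D)$ extracts precisely $\ord_E(F^*D) = \ord_E(f^*D)$.

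The key identity is then a ramification/pullback formula: $\ord_E(f^*D) = \sum_{f_* w = v} a_w\, \ord_w(D) + \sum_i d_i\, (W_{f(C_i)} \cdot Z_v)\, \ord_{C_i}(D)$, which I would establish by a local computation at the center $c_Y(\ord_E)$. The first sum comes from the usual behaviour of valuations under pushforward by $f$: the divisorial valuations $w$ with $f_* w = v$ (up to scalar) are exactly $\ord_E$ for the prime divisors $E$ over $X_0$ that $F$ maps dominantly onto the divisor defining $v$, and the positive integers $a_w$ are the corresponding multiplicities (intersection indices), so $\ord_w(f^*D)$ contributes $a_w \ord_{f_*w}(D)$ — this is the divisorial analogue of the projection formula and is already implicit in the setup of Section \ref{subsec:valuations}. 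The second sum accounts for the nonproper divisors: when $F(C_i)$ is a curve inside $X_0$, the class $f^*D$ acquires a contribution along $C_i$ whose coefficient is the intersection number of $f(C_i)$ with the divisor $D$ viewed in $X$, weighted by the degree $d_i = \deg(f\colon C_i \to f(C_i))$; since $D$ is supported at infinity this intersection number is exactly $W_{f(C_i)} \cdot Z_v$ when $D$ ranges so as to represent $Z_v$ — more precisely, running $D$ over $\Cinf$ and using that $Z_v$ is dual to $\Cinf$, the coefficient along $C_i$ is the linear functional $D \mapsto f(C_i) \cdot D = W_{f(C_i)} \cdot D$ evaluated against the dual class, giving $W_{f(C_i)} \cdot Z_v$. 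Positivity $a_w > 0$ and $d_i > 0$ is immediate from their interpretation as local multiplicities and covering degrees.

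Assembling these, for every $D \in \Cinf$ we get
\begin{equation}
  f^*Z_v \cdot D = \Big( \sum_{f_* w = v} a_w Z_w + \sum_{i=1}^r d_i (W_{f(C_i)} \cdot Z_v)\, \hat C_i \Big) \cdot D,
\end{equation}
and since the pairing $\Cinf \times \Winf \to \R$ is perfect (Proposition \ref{prop:embeddings}) this forces the claimed equality of Weil classes, noting that $f^*Z_v \in \Winf$ because $\Winf$ is $f^*$-invariant by Proposition \ref{prop:functoriality} and each $\hat C_i \in \Cinf \subset \Winf$ by Lemma \ref{lemme:divisorial-valuation-dual-class}. The main obstacle I anticipate is the bookkeeping in the local pullback formula at $c_Y(\ord_E)$: one must carefully separate the infinitely-near prime divisors that $F$ maps to the divisor of $v$ (contributing the $a_w$ sum) from those $C_i$ passing through that center (contributing the $d_i$ term), and verify that no curve contracted by $F$ nor any prime divisor at infinity of $Y$ not lying over the divisor of $v$ contributes — but this is exactly the content of Proposition \ref{prop:non-proper-curves}, so the argument reduces to an organised application of that classification.
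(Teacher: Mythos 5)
Your overall architecture is the right one and is essentially the paper's: determine $f^*Z_v$ by pairing it against a separating family, use the projection formula $f^*\alpha\cdot\beta=\alpha\cdot f_*\beta$, feed in the classification of Proposition \ref{prop:non-proper-curves}, and conclude by duality. (The paper tests against the Weil classes $W_{E'}$, $E'\in\sD_\infty(X_0)$, which are ``atomic'' for $f_*$ --- each $f_*W_{E'}$ is either in $\Cinf^\perp$, or a multiple of a single $W_{E''}$, or $d_iW_{f(C_i)}$ --- and then uses that a class in $\Cinf$ is determined by the numbers $W_{E'}\cdot D$; your choice of testing against $D\in\Cinf$ is the dual of this and is workable in principle.)

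The gap is that your central computation runs in the wrong direction: having correctly announced $f^*Z_v\cdot D=Z_v\cdot f_*D$, you then compute $\hat E\cdot F^*D=\ord_E(F^*D)$, which is $Z_v\cdot f^*D=f_*Z_v\cdot D$ --- a different quantity (by Proposition \ref{prop:pushforward-divisor-valuation} it equals $Z_{f_*v}\cdot D$, a single term, not the sum you want). Three concrete symptoms. (i) Your ``key identity'' $\ord_E(f^*D)=\sum_{f_*w=v}a_w\ord_w(D)+\cdots$ is false as stated: its left-hand side concerns the single valuation $f_*v$ downstairs, while its right-hand side concerns the preimages of $v$ upstairs; already for an automorphism the two sides are $\ord_{f(E)}(D)$ and $\ord_{f^{-1}(E)}(D)$. (ii) The class $F^*D$ has \emph{no} components along the nonproper curves $C_i$, since $F(C_i)=f(C_i)\not\subset\Supp D$; the $\hat C_i$-terms of \eqref{eq:pullback} arise from the \emph{pushforward}, via $f_*W_{C_i}=d_i W_{f(C_i)}$, whose intersection with $Z_v$ is generally nonzero because $f(C_i)$ lies inside $X_0$. (iii) Reading $a_w$ off from ``$\ord_w(f^*D)=a_w\ord_{f_*w}(D)$'' produces the ramification index of $f$ along the divisor $E_w$ of $w$, whereas the correct coefficient is $\deg(f\colon E_w\to E)$, coming from $f_*W_{E_w}=\deg(f|_{E_w})\,W_E$; these integers differ in general. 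The repair is to decompose $f_*D$ (or, more simply, $f_*W_{E'}$ for each prime divisor $E'$ at infinity, as the paper does) rather than $F^*D$, and then pair with $Z_v=\hat E$.
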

\begin{proof}
  We can assume that $v = \ord_F$. Let $E$ be a prime divisor at infinity and $W_E$ the
  induced Weil divisor. If $f(E)$ is a closed point in $X_0$, then by Proposition \ref{prop:non-proper-curves}
  \begin{equation}
    f^* \hat F \cdot W_E = \hat F \cdot f_* (W_E) = 0. 
    \label{<+label+>}
  \end{equation}
  If $f_* W_E = d_E W_{E'}$ with $E' \in \sD_\infty (X_0)$ and $E' \neq F$, then also $f^* \hat F \cdot W_E =
  \hat F \cdot d_E W_{E'} = 0$.
  If $E' = F$, then  $f_* W_{E} = a_E W_F$ and
  \begin{equation}
    f^* \hat F \cdot W_E = a_E
    \label{<+label+>}
  \end{equation}
  where $a_E : \deg (f: E \rightarrow F)$.
  Finally if $E = C_1, \dots, C_r$, then 
  \begin{equation}
    f^* Z_v \cdot W_{C_i} = Z_v \cdot (d_i W_{ f(C_i)}).
    \label{<+label+>}
  \end{equation}
  Thus we get the result.
\end{proof}
\begin{rmq}\label{rmq:}
  The formula holds actually for any valuation $v \in \cV_\infty$ using an approximation argument but this is not needed
  for this paper.
\end{rmq}

\begin{cor}\label{cor:preimage-one}
  Let $v$ be a valuation such that $Z_v^2 = 0$, then there exists at most one valuation $w$ with $Z_w^2 \geq 0$ such
  that $f_* w = v$.
\end{cor}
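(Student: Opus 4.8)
The plan is to combine the pullback formula of Corollary~\ref{cor:pullback-divisors-valuation} with the Hodge index theorem. I would first discard the trivial case where $v$ is not centered at infinity (then $Z_v=0$ and there is nothing to prove), and then treat a divisorial valuation $v=\ord_F$; the general case is recovered at the end by approximation. So assume $v=\ord_F$. By Lemma~\ref{lemme:divisorial-valuation-dual-class} we have $Z_v=\hat F$, hence $\hat F^2=Z_v^2=0$, and by Proposition~\ref{prop:nef-valuation} the class $Z_v$ is nef.

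The heart of the argument is that $f^*Z_v$ is orthogonal to every term of its own pullback expansion. Write
\[
 f^*Z_v=\sum_{f_*w=v}a_w\,Z_w+\sum_{i=1}^r c_i\,\hat C_i,\qquad a_w>0,
\]
as in Corollary~\ref{cor:pullback-divisors-valuation}. Since $v$ is divisorial, every $w$ occurring is divisorial (the pushforward of an irrational valuation is irrational), say $w=\ord_{E_w}$, so $Z_w=\hat{E_w}$. For such an $E_w$, writing $t_F$ for a local equation of $F$ and using that $f$ carries the generic point of $E_w$ onto that of $F$, one computes for every $D\in\Cinf$
\[
 f_*\hat{E_w}\cdot D=\hat{E_w}\cdot f^*D=\ord_{E_w}(f^*D)=m_w\,\ord_F(D)=m_w\,\hat F\cdot D,\qquad m_w:=\ord_{E_w}(f^*t_F)\geq1,
\]
whence $f_*\hat{E_w}=m_w\hat F$. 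Likewise, since $f(C_i)\subset X_0$, the class $f^*D$ has an invertible local equation at the generic point of $C_i$, so $f_*\hat C_i\cdot D=\ord_{C_i}(f^*D)=0$ for all $D\in\Cinf$, i.e.\ $f_*\hat C_i=0$ (equivalently $f_*\hat C_i\in\Cinf\cap\Cinf^{\perp}=\{0\}$). Using the adjunction $(f^*\alpha)\cdot\beta=\alpha\cdot(f_*\beta)$ for the perfect pairing $\Cinf\times\Winf\to\R$ and $\hat F^2=0$,
\[
 (f^*Z_v)\cdot Z_w=\hat F\cdot f_*\hat{E_w}=m_w\,\hat F^2=0,\qquad (f^*Z_v)\cdot\hat C_i=\hat F\cdot f_*\hat C_i=0 .
\]
Summing over the pullback formula gives $(f^*Z_v)^2=0$. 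Moreover $f^*Z_v\neq0$: a dominant endomorphism of a surface is generically finite, so $f_*f^*=(\deg f)\,\mathrm{id}$ and $f^*$ is injective on Cartier classes; and $f^*Z_v$ is nef, being the pullback of a nef class.

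Now let $w_1,w_2$ be two valuations with $f_*w_i=v$ and $Z_{w_i}^2\geq0$; I claim $w_1=w_2$. Pick a completion $Y$ of $X_0$ above the models in which $f^*Z_v$, $\hat{E_{w_1}}$, $\hat{E_{w_2}}$ and the pullback $f^*[H_0]$ of a Goodman ample class $H_0$ supported at infinity (Theorem~\ref{thm:goodman}) are all defined, and put $\eta:=(f^*Z_v)_Y\in\NS(Y)_\R$. Then $\eta$ is nef, $\eta\neq0$, $\eta^2=(f^*Z_v)^2=0$, and $\eta\cdot(Z_{w_i})_Y=(f^*Z_v)\cdot Z_{w_i}=0$, while $(Z_{w_i})_Y^2=Z_{w_i}^2\geq0$ (the self‑intersection of a Cartier class is stable). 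As $\NS(Y)_\R$ has signature $(1,\rho(Y)-1)$, the Hodge index theorem forces $\eta^{\perp}$ to be negative semidefinite with radical $\R\eta$; hence $(Z_{w_i})_Y^2=0$ and $(Z_{w_i})_Y=\mu_i\eta$ for some $\mu_i\in\R$. Pairing with $f^*[H_0]$ and using $Z_{w_i}\cdot f^*[H_0]=Z_v\cdot[H_0]=v(h_0)>0$ (where $h_0$ is a local equation of $H_0$ at $c(v)\in\Delta$; that $Z_{w_i}\cdot f^*D=Z_v\cdot D$ on $\Cinf$ follows from Proposition~\ref{prop:pushforward-divisor-valuation}) together with $\eta\cdot(f^*[H_0])_Y=(\deg f)(Z_v\cdot[H_0])$, I get $\mu_i=\tfrac{1}{\deg f}>0$ for both $i$. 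Thus $Z_{w_1}=Z_{w_2}=\tfrac{1}{\deg f}\eta$, so $\hat{E_{w_1}}=\hat{E_{w_2}}$, which forces $E_{w_1}=E_{w_2}$ and hence $w_1=w_2$.

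Finally, for an arbitrary $v$ with $Z_v^2=0$ centered at infinity one has $[v]\in\cC_\infty$, and every $w$ with $f_*w=v$ and $Z_w^2\geq0$ also lies on $\cC_\infty$, so $Z_w^2=0$ automatically by Proposition~\ref{prop:vanishing-self-intersection}; approximating $[v]$ by rational points of $\cC_\infty$ and lifting them (through $f_*$, which is a local homeomorphism near the irrational points of $\cC_\infty$) to nearby rational points lying over them reduces the claim to the divisorial case just proved. Alternatively, the argument of the second and third paragraphs can be run directly in $\wNS(X_0)$, whose intersection form is of Minkowski type, once one notes that $f_*Z_w=Z_v+w_w$ with $w_w\in\Cinf^{\perp}$ gives $Z_w\cdot f^*Z_v=Z_v^2=0$. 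I expect the main obstacle to be the orthogonality computation of the second paragraph — once $f^*Z_v$ is seen to be a null nef class orthogonal to its own pullback expansion, Hodge index does the rest — together with making the reduction rigorous for irrational $v$.
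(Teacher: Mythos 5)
Your argument is correct and uses the same two ingredients as the paper's proof — the projection formula played against the pullback expansion \eqref{eq:pullback}, and the Hodge index theorem — but organises them around a different pivot class. The paper fixes one preimage $w_0$ with $Z_{w_0}^2\geq 0$, expands $0=Z_{w_0}\cdot f^*Z_v$ term by term, uses that every term is nonnegative ($Z_{w_0}$ nef, the $Z_w$ and $\hat C_i$ effective) to conclude $Z_{w_0}\cdot Z_w=0$ for every preimage $w$, and then applies Hodge index to the pair $(Z_{w_0},Z_w)$. You pivot on $f^*Z_v$ instead: it is nef, nonzero and isotropic, each $Z_{w_i}$ is orthogonal to it by the projection formula alone, so Hodge index makes both $Z_{w_1}$ and $Z_{w_2}$ proportional to $f^*Z_v$, and pairing with $f^*H_0$ pins both constants to $1/\deg f$. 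This buys two things: the term-by-term nonnegativity of \eqref{eq:pullback} is never actually needed for uniqueness (your second paragraph is a detour, since $(f^*Z_v)^2=\deg(f)\,Z_v^2=0$ is immediate), and the ``proportional'' branch of the Hodge index alternative is genuinely closed, whereas the paper's conclusion ``$w=w_0$ or $Z_w^2<0$'' leaves the normalisation implicit. Two caveats, neither fatal. First, $f_*\hat C_i=0$ is not literally true: $f_*\hat C_i$ is a Cartier class with a nonzero component along the curve $f(C_i)\subset X_0$; what is true, and what you actually use, is $f_*\hat C_i\in\Cinf^{\perp}$, which suffices because you only pair it against $\hat F\in\Cinf$. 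Second, your reduction of irrational $v$ to the divisorial case by lifting through ``$f_*$, a local homeomorphism near the irrational points of $\cC_\infty$'' is circular as stated, since the injectivity of $f_*$ on $\cC_\infty$ is precisely what Lemma~\ref{lemme:circle-invariant} later extracts from this corollary; your fallback of running the Hodge-index argument directly in the completed Weil class space, where the form is of Minkowski type, is the right fix — and note that the paper's own proof carries the same scope restriction, \eqref{eq:pullback} being stated only for divisorial $v$.
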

\begin{proof}
  We use Corollary \ref{cor:pullback-divisors-valuation}. We have $0 = Z_v^2 = f_* Z_w \cdot Z_v = Z_w \cdot f^* Z_v$.
  Now let $w_0$ be such that $f_* w_0 = v$ and $Z_{w_0}^2 \geq 0$.
  Using \eqref{eq:pullback} we have 
  \begin{equation}
    0 = \sum_{f_* w = v} Z_{w_0} \cdot Z_w + \sum_i d_i (Z_v \cdot W_{f(C_i)}) \cdot \left(\hat C_i \cdot Z_{w_0}\right).
    \label{<+label+>}
  \end{equation}
  Since $Z_v$ and $Z_{w_0}$ are nef and effective, all the terms in this sum are $\geq 0$ so they must all vanish. In
  particular, if $w$ is in the preimage of $v$, then $Z_{w_0} \cdot Z_w = 0$ which implies either that $w = w_0$ or
  $Z_w^2 < 0$ by the Hodge index theorem.
\end{proof}

\subsection{A characterisation of $\mathcal C_\infty$}\label{subsec:}
Let $X_0$ be a smooth cubic affine surface of Markov type over $\C$. We have seen that for any $v \in \mathcal C_\infty, Z_v^2 = 0$ and $(K+\Delta) \cdot Z_v =
0$. We show that in fact both these properties characterise $\mathcal C_\infty$. 

\begin{prop}\label{prop:charac-cercle}
  We have that 
  \begin{align}
    \CCinf &= \left\{ v \in \cV_\infty: Z_v^2 \geq 0 \right\} = \left\{ v \in \cV_\infty : Z_v^2 = 0 \right\} \\
    &= \left\{ v \in \cV_\infty : Z_v \cdot (K+\Delta) \leq 0  \right\} = \left\{ v \in \cV_\infty: Z_v \cdot (K+\Delta) = 0
    \right\}.
    \label{<+label+>}
  \end{align}
\end{prop}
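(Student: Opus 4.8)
The plan is to split each of the four claimed equalities into two inclusions. The ``easy'' halves $\CCinf \subseteq \{v\in\cV_\infty : Z_v^2 = 0\} \subseteq \{v\in\cV_\infty : Z_v^2 \geq 0\}$ are immediate from Proposition~\ref{prop:vanishing-self-intersection}, and likewise $\CCinf \subseteq \{v\in\cV_\infty : Z_v\cdot(K+\Delta) = 0\} \subseteq \{v\in\cV_\infty : Z_v\cdot(K+\Delta) \leq 0\}$ is exactly the computation carried out inside the proof of Proposition~\ref{prop:trivial-k-plus-delta}. So everything comes down to the converse statement: \emph{if $v\in\cV_\infty$ and $[v]\notin\CCinf$, then $Z_v^2 < 0$ and $Z_v\cdot(K+\Delta) > 0$.}

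To prove this I would first pass to a favourable completion. By the description of $\CCinf$ as the topological circle obtained from the dual graphs of the cyclic completions of $X_0$ (see \cite{abboudDynamicsEndomorphismsAffine2023}), a valuation of $\cV_\infty$ whose centre in every cyclic completion is a vertex of the cycle must lie on $\CCinf$; hence, since $[v]\notin\CCinf$, there is a cyclic completion $X$ for which $p := c_X(v)$ is a free point of $\Delta_X$, lying on a single irreducible component $E$ of $\Delta_X$. Let $\pi\colon X'\to X$ be the blow-up of $p$, with exceptional curve $E_0$; then $E_0^2 = -1$, $X'$ is again a completion of $X_0$ (no longer cyclic), and $c_{X'}(v) =: q$ lies on $E_0$.

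The heart of the matter is the computation of the dual basis element $\hat E_0 \in \DivInf(X')_\R$ of $E_0$. Since $p$ lies on $E$ and on no other component, $\pi^* E = \tilde E + E_0$ while $\pi^* E' = \tilde E'$ for the other components; writing $\hat E_0 = \pi^* B - E_0$ and imposing $\hat E_0\cdot E_0 = 1$ and $\hat E_0\cdot \tilde E'' = 0$ for all other components $\tilde E''$ of $\Delta_{X'}$ forces $B$ to be the dual basis element $\hat E\in\DivInf(X)_\R$ of $E$; thus $\hat E_0 = \pi^*\hat E - E_0$, and by the same argument $\hat{\tilde E} = \pi^*\hat E$. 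Now $\hat E^2 = 0$ by Proposition~\ref{prop:vanishing-self-intersection}, so $\hat E$ is effective (Proposition~\ref{prop:nef-valuation}) and, since $\ord_E(\hat E) = \hat E^2 = 0$, it is supported on the components other than $E$, none of which meets the free point $p$; hence $\Supp(\pi^*\hat E)$ misses $E_0$ near $q$ and has coefficient $0$ along $E_0$. Consequently $\ord_{E_0}(\hat E_0) = (\hat E_0)^2 = -1$ and $\ord_{E_0}(\hat{\tilde E}) = \hat{\tilde E}\cdot\hat E_0 = \hat E^2 = 0$. Writing the incarnation of $Z_v$ in $X'$ as $(Z_v)_{X'} = v(x)\,\hat E_0 + v(y)\,\hat{\tilde E}$, where $x$ (resp. $y$) is a local equation of $E_0$ (resp. of $\tilde E$, if $q\in\tilde E$; otherwise $v(y):=0$) at $q$ and $v(x) > 0$, and taking the coefficient along $E_0$ gives
\[
  \ord_{E_0}(Z_v) \;=\; v(x)\,\ord_{E_0}(\hat E_0) + v(y)\,\ord_{E_0}(\hat{\tilde E}) \;=\; -\,v(x) \;<\; 0 ,
\]
so $Z_v$ is not effective, and therefore $Z_v^2 < 0$ by Proposition~\ref{prop:nef-valuation}. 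Finally, since $K_X + \Delta_X = 0$ by Proposition~\ref{prop:trivial-k-plus-delta}, one has $K_{X'} + \Delta_{X'} = \pi^*(K_X+\Delta_X) + E_0 = E_0$, whence
\[
  Z_v\cdot(K+\Delta) \;=\; (Z_v)_{X'}\cdot E_0 \;=\; v(x) \;>\; 0 ,
\]
which completes the converse statement and hence the proposition.

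I expect the delicate points to be, first, the reduction step — i.e. the precise identification of $\CCinf$ inside $\hat\cV_\infty$ with the circle built from the dual graphs of cyclic completions, which is what guarantees that a valuation of $\cV_\infty$ lying outside $\CCinf$ has free centre on some cyclic completion — and, second, the bookkeeping around $K+\Delta$: one must read ``$Z_v\cdot(K+\Delta)$'' as the pairing evaluated through the incarnation $K_{X'}+\Delta_{X'}$ on the non‑cyclic model $X'$, consistently with the way it is evaluated on $\CCinf$ in Proposition~\ref{prop:trivial-k-plus-delta}. Everything else is linear algebra in $\DivInf(X')_\R$ together with the two facts already at our disposal, namely $\hat E^2 = 0$ (so $\hat E$ is effective and supported away from $E$) and $K_X + \Delta_X = 0$.
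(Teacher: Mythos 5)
Your proposal is correct and follows essentially the same route as the paper: both reduce to a cyclic completion on which the centre of $v$ is a free point of a single component $E$, blow it up, and read off the two inequalities from the resulting configuration. The only genuinely different detail is how you get $Z_v^2<0$: you compute $\ord_{E_0}(Z_v)=-v(x)<0$ explicitly and invoke the equivalence of Proposition~\ref{prop:nef-valuation}, whereas the paper notes $Z_v\cdot\hat E=\hat E^2=0$ and applies the Hodge index theorem, ruling out $Z_v=\hat E$; your version is slightly more computational but avoids that last exclusion step. One small imprecision to fix: your final line asserts the \emph{equality} $Z_v\cdot(K+\Delta)=(Z_v)_{X'}\cdot(K_{X'}+\Delta_{X'})$, but since neither $Z_v$ nor the Weil class $K+\Delta$ is determined on the model $X'$ (the centre of $v$ there is still a closed point), the pairing must be evaluated on a higher model $Z$ where $v$ becomes divisorial; the paper supplies the missing step, namely $K_Z+\Delta_Z=\pi^*(K_{X'}+\Delta_{X'})+R$ with $R\ge 0$ supported at infinity, so that $Z_v\cdot(K+\Delta)\ge (Z_v)_{X'}\cdot(K_{X'}+\Delta_{X'})=v(x)>0$. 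With that one-line correction (an inequality in place of your equality) your argument is complete.
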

\begin{proof}
  Let $w \in \cV_\infty \setminus \CCinf$, start with any cyclic completion $X$ of $X_0$. We keep blowing up the center
  of $w$ until it becomes a free point on a single prime divisor at infinity $E$. We still call $X$ the newly obtained
  completion. We can assume that $Z_w \cdot E = 1$. This implies that $Z_{w,X} = \hat E$. And therefore, 
  \begin{equation}
    Z_w = Z_{w,x} \cdot \hat E = 0.
    \label{<+label+>}
  \end{equation}
  Since $(\hat E)^2 = 0$, by the Hodge index theorem this implies that either $Z_w^2 < 0$ or that $Z_w = \hat E$ but
  since $w \not \in \CCinf$ the second option is not possible. Now for $K+ \Delta$, let $\pi :W \rightarrow X$ be the
  blowup of the center of $w$ in $X$ and let $F$ be the exceptional divisor. Since $c_X (w)$ is a free point on $E$, we
  have that $K_W + \Delta_W = \pi^* (K_X + \Delta_X) + F$. So we get that 
  \begin{equation}
    Z_{w,W}\cdot (K_W + \Delta_W) = Z_{w,W} \cdot F > 0.
    \label{<+label+>}
  \end{equation}
  If $\pi : Z \rightarrow W$ is a morphism of completions such that the center of $w$ is a prime divisor $T$, then we
  have by the ramification formula that 
  \begin{equation}
    K_Z + \Delta_Z = \pi^* (K_W + \Delta_W) + R
    \label{<+label+>}
  \end{equation}
  where $R \geq 0$ is effective and $\pi$-exceptional. This implies that 
  \begin{equation}
    Z_w \cdot (K+\Delta) = \hat T \cdot (K_Z + \Delta_Z) \geq Z_{w,W} \cdot (K_W + \Delta_W) >0.
    \label{<+label+>}
  \end{equation}
\end{proof}

\subsection{Proof of Theorem \ref{thm:no-endomorphism}}\label{subsec:no-endomorphism-proof}
The goal of this section is to establish the following result. 
\begin{thm}\label{thm:no-endomorphism}
  Let $X_0$ be a smooth affine surface over $\C$ which is the complement of a triangle of line inside a smooth cubic surface, then
  any dominant endomorphism of $X_0$ is an automorphism.
\end{thm}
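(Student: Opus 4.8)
The plan is to prove that $f$ is both proper and unramified, and then to conclude by a covering‑space argument. We may assume $K=\C$. Write $X_0=X\setminus\Delta_X$ with $X\subset\P^3$ the given smooth cubic surface and $\Delta_X=L_1\cup L_2\cup L_3$ the triangle of lines; since $\Delta_X$ is a hyperplane section of $X$ we have $\Delta_X=-K_X$, so $K_X+\Delta_X=0$, the divisor $\Delta_X$ is ample, and $\omega_{X_0}\cong\OO_{X_0}$. Moreover $X_0$ is completable by a cycle of rational curves and $\Aut(X_0)$ is non‑elementary (see~\cite{el-hutiCubicSurfacesMarkov1974}), so Propositions~\ref{prop:vanishing-self-intersection}, \ref{prop:nef-valuation} and \ref{prop:charac-cercle} and Corollaries~\ref{cor:pullback-divisors-valuation} and \ref{cor:preimage-one} apply to $X_0$.

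\textbf{Step 1: $f$ is unramified, hence étale.} This step holds for every dominant endomorphism. Choose a completion $Y$ of $X_0$ with simple normal crossings boundary, a birational morphism $\rho\colon Y\to X$ (a composition of blow‑ups of boundary points) and a morphism $F\colon Y\to X$ lifting $f$. Writing $K_Y=F^*K_X+R_F$ with $R_F\ge 0$ the ramification divisor of the generically finite morphism $F$, and using $K_X=-\Delta_X$, we obtain
\[
  K_Y+\Delta_Y \;=\; R_F+\bigl(\Delta_Y-F^*\Delta_X\bigr).
\]
Counting coefficients along the components of $\Delta_Y$ (they match exactly on a component dominating some $L_j$, and an $F$‑exceptional boundary component occurs in $R_F$ with coefficient at least the one in $F^*\Delta_X-\Delta_Y$) shows the right‑hand side is an \emph{effective} divisor $D_0$, so $D_0\in|K_Y+\Delta_Y|$. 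On the other hand, inductively as in the proof of Proposition~\ref{prop:Cartier-canonical-class}, $K_Y+\Delta_Y=\rho^*(K_X+\Delta_X)+E_{\mathrm{free}}=E_{\mathrm{free}}$ is effective and supported on $\Delta_Y$, while $h^0(K_Y+\Delta_Y)=e(\Delta_Y)=1$ by the Euler‑number formula of Section~\ref{sec:lemma-volume-form-with-logarithmic-poles} (the dual graph of $\Delta_Y$ has a single independent cycle). Hence $|K_Y+\Delta_Y|$ contains a single divisor, so $D_0$ is supported on $\Delta_Y$; but $D_0\ge R_F|_{X_0}$, and $R_F|_{X_0}$, which contains the ramification divisor $R_f$ of $f\colon X_0\to X_0$, is supported in $X_0$. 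Therefore $R_f=0$: the map $f$ is unramified, hence has finite fibres, hence is flat by miracle flatness, hence étale.

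\textbf{Step 2: $f$ is proper.} Suppose not. By Proposition~\ref{prop:non-proper-curves} there is a nonempty family $C_1,\dots,C_r\in\sD_\infty(X_0)$ of nonproper divisors, so each $f(C_i)$ is a curve in $X_0$ whose closure $\overline{f(C_i)}$ in $X$ meets $\Delta_X$; after renumbering we may assume $W_{f(C_1)}\cdot\hat L_1>0$. Applying the pullback formula of Corollary~\ref{cor:pullback-divisors-valuation} to $v=\ord_{L_1}\in\CCinf$ gives
\[
  f^*\hat L_1 \;=\; \sum_{f_*w=\ord_{L_1}} a_w\,Z_w \;+\; \sum_{i=1}^{r} d_i\,\bigl(W_{f(C_i)}\cdot\hat L_1\bigr)\,\hat C_i .
\]
Since $L_1$ lies on the cycle $\Delta_X$ we have $\hat L_1^2=0$ and $\hat L_1$ is nef (Propositions~\ref{prop:vanishing-self-intersection} and \ref{prop:nef-valuation}), so $f^*\hat L_1$ is nef with $(f^*\hat L_1)^2=(\deg f)\,\hat L_1^2=0$. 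The idea is that such a nef class of vanishing self‑intersection cannot carry the positive term $d_1(W_{f(C_1)}\cdot\hat L_1)\hat C_1$: intersecting the identity above with suitable classes and using Proposition~\ref{prop:charac-cercle} ($Z_w^2\le 0$ for all $w\in\cV_\infty$, with equality exactly on $\CCinf$) together with Corollary~\ref{cor:preimage-one} (at most one preimage $w$ of $\ord_{L_1}$ has $Z_w^2\ge 0$) should force $W_{f(C_i)}\cdot\hat L_j=0$ for all $i$ and all $j\in\{1,2,3\}$. Since $\hat L_1+\hat L_2+\hat L_3=\Delta_X$, this yields $\overline{f(C_i)}\cdot\Delta_X=0$, contradicting the ampleness of $\Delta_X$. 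I expect Step~2 to be the main obstacle: the delicate part is the bookkeeping with the mixed Cartier–Weil intersection pairing and the sign conditions on the classes $Z_w$, which is exactly where the geometry of the circle $\CCinf$ enters.

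\textbf{Step 3: conclusion.} By Steps~1 and 2, $f\colon X_0\to X_0$ is finite and étale, hence $f\colon X_0(\C)\to X_0(\C)$ is a finite topological covering of the connected, simply connected complex surface $X_0(\C)$ (see \cite[Lemma~3.10]{cantatHolomorphicDynamicsPainleve2007}). Therefore $\deg f=1$, and a degree‑one finite étale morphism is an isomorphism; so $f$ is an automorphism.
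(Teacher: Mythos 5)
Your proof is incomplete: Step 2 (properness) is the heart of the matter and you leave it as a sketch, which you acknowledge. The phrase ``intersecting the identity above with suitable classes \dots should force $W_{f(C_i)}\cdot\hat L_j=0$'' hides exactly the content that does not follow formally from Proposition~\ref{prop:charac-cercle} and Corollary~\ref{cor:preimage-one}. Two things are missing. First, to get a contradiction from the pullback formula one must pair $f^*Z_v$ against $Z_{w_0}$ for a preimage $w_0$ of $v$ lying on the circle; fixing $v=\ord_{L_1}$ in advance is already problematic, since $f_*:\CCinf\to\CCinf$ is only known to be injective (Lemma~\ref{lemme:circle-invariant}), not surjective, so $\ord_{L_1}$ need not have a preimage in $\CCinf$ at all. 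The paper avoids this by choosing a divisorial $w_0\in\CCinf$ first and setting $v=f_*w_0$. Second, and more seriously, the contradiction requires the \emph{strict} positivities $\hat C_i\cdot Z_{w_0}>0$ and $W_{f(C_i)}\cdot Z_v>0$: the valuation $\ord_{C_i}$ generally does not lie on $\cC_\infty$, $W_{f(C_i)}$ is only a Weil class, and both intersection numbers are computed in a completion where they can be spoiled by satellite centers and infinitely near base points. This is precisely what the paper's Lemma~\ref{lemme:nice-completion} (arrange that the centers of the $\ord_{C_i}$ and the points of $\overline{f(C_i)}\cap\Delta_X$ are free) and Corollary~\ref{cor:technique} (positivity for all but finitely many divisorial $v\in\cC_\infty$) are for. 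Without them, the identity $0=\sum_{f_*w=v}a_w\,Z_w\cdot Z_{w_0}+\sum_i d_i(W_{f(C_i)}\cdot Z_v)(\hat C_i\cdot Z_{w_0})$ only tells you that a sum of nonnegative terms vanishes, which is no contradiction. So the argument does not close as written.

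On the other hand, your Step 1 is correct and takes a genuinely different route from the paper. The paper kills the ramification divisor $R$ by pairing it with the nef classes $\hat E$ (to get $b_E=0$) and with Goodman's ample divisor $H=\sum_E(H\cdot E)\hat E$ (to kill the interior components), whereas you observe that $D_0=K_Y+\Delta_Y-F^*(K_X+\Delta_X)$ is an effective member of $|K_Y+\Delta_Y|$, that $h^0(K_Y+\Delta_Y)=e(\Delta_Y)=1$ by the lemma of Section~\ref{sec:lemma-volume-form-with-logarithmic-poles}, and that the unique effective divisor in this class is $\rho$-exceptional, hence supported at infinity; this forces $R_f=0$. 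That is a clean alternative (the only point to make explicit is the effectivity of $D_0$ along the $F$-contracted boundary components, i.e.\ $\ord_E(R_F)\ge\ord_E(F^*\Delta_X)-1$, which is the same log ramification formula the paper invokes without proof). Step 3 coincides with the paper's covering-space conclusion.
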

  Let $f:X_0 \rightarrow X_0$ be a dominant endomorphism.
  The proof decomposes in several steps we first show that $f : X_0 \rightarrow X_0$ is étale. Then we will show that
  $f$ is necesseraly proper. This will imply that $f$ is a covering but since $X_0(\C)$ is simply connected this will
  imply that $f$ is a homeomorphism and therefore an automorphism of $X_0$.
  \paragraph{\textbf{First step: $f$ is étale}}
  \begin{lemme}\label{lemme:circle-invariant}
    We have that $f_* (\CCinf) \subset \CCinf$ and the map $f_* : \CCinf \rightarrow \CCinf$ is injective. Furthermore
    for every $v \in \CCinf$
    \begin{equation}
      f_* Z_v = Z_{f_*v}.
      \label{<+label+>}
    \end{equation}
  \end{lemme}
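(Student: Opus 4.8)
The plan is to reduce to the case where $v$ is divisorial, and then push everything onto a single model dominating a cubic completion, where the triviality of $K_X+\Delta_X$, Goodman's ample divisor at infinity, and the Hodge index theorem become available. The reduction is routine: divisorial valuations are exactly the rational points of the circle $\CCinf$, hence dense, while $v\mapsto f_*v$ and $v\mapsto Z_v$ are continuous for the strong topology, $f_*\colon \wNS(X_0)\to\wNS(X_0)$ is continuous, and $\CCinf$ is closed; so it suffices to prove all three assertions for divisorial $v\in\CCinf$. Moreover, once $f_*(\CCinf)\subseteq\CCinf$ is known, injectivity is immediate: if $v,v'\in\CCinf$ and $f_*v=f_*v'=:u$, then $Z_u^2=0$ while $Z_v^2=Z_{v'}^2=0\ge 0$, so Corollary \ref{cor:preimage-one} forces $v=v'$.

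So fix a divisorial $v=\ord_S\in\CCinf$ and a cubic completion $X$ of $X_0$; then $\Delta_X$ is a triangle of lines, and by Proposition \ref{prop:trivial-k-plus-delta} (valid because $\Aut(X_0)$ is non-elementary for a Markov cubic) we have $K_X+\Delta_X\sim 0$. Since $f\colon X\dashrightarrow X$ is a morphism on $X_0$, its indeterminacy locus lies on $\Delta_X$, so blowing up boundary points only we obtain a completion $Y$ of $X_0$ with a birational morphism $\pi_1\colon Y\to X$, a morphism $F\colon Y\to X$ lifting $f$, and with $S$ a prime component of $\Delta_Y$; note $\Delta_Y$ is a configuration of smooth rational curves. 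First, $f_*v$ is centred at infinity: if it were not, then $Z_{f_*v}=0$, so $f_*Z_v=f_*\hat S\in\Cinf^\perp$ by Proposition \ref{prop:pushforward-divisor-valuation}, hence $\hat S\cdot f^*H=f_*\hat S\cdot H=0$ for an ample boundary class $H\in\Cinf$ (Theorem \ref{thm:goodman}); but $\hat S=Z_v$ is a non-zero nef effective class (Proposition \ref{prop:nef-valuation}) and $f^*H$ is big and nef, so $\hat S\cdot f^*H>0$, a contradiction.

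Next, write $f_*Z_v=Z_{f_*v}+w_v$ with $w_v\in\Cinf^\perp$. Since $w_v$ pairs to $0$ with every $\hat E$, the boundary coefficients $\ord_E(Z_{f_*v})$ coincide with those of $f_*Z_v=F_*\hat S_Y$, which is an effective Cartier class (the incarnation $\hat S_Y$ of $Z_v$ is effective because $v\in\CCinf$); hence $Z_{f_*v}\ge 0$, so $Z_{f_*v}$ is nef (Proposition \ref{prop:nef-valuation}) and $f_*v\in\CCinf$ (Proposition \ref{prop:charac-cercle}), and in particular $Z_{f_*v}^2=0$. Finally, to get $w_v=0$: the class $f_*Z_v$ is nef (pushforward of a nef class), so $(f_*Z_v)^2\ge 0$; on the other hand $Z_{f_*v}$ is Cartier (as $f_*v$ is divisorial) and orthogonal to $w_v\in\Cinf^\perp$, so
\[
(f_*Z_v)^2=(Z_{f_*v}+w_v)^2=Z_{f_*v}^2+w_v^2=w_v^2,
\]
while $w_v\in\Cinf^\perp$ and $\Cinf$ contains an ample class (Corollary \ref{cor:non-degenerate-intersection-product}), so the Hodge index theorem gives $w_v^2\le 0$ with equality only for $w_v=0$; hence $w_v^2=0$ and $w_v=0$, i.e. $f_*Z_v=Z_{f_*v}$.

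The step I expect to cost the most care is the penultimate one — checking that the boundary part of $Z_{f_*v}$ genuinely equals the (effective) boundary part of $F_*\hat S_Y$, i.e. that the discrepancy $w_v$ carries no boundary multiplicity. This is where the perfect pairing $\Cinf\times\Winf\to\R$ and a precise analysis of how $F$ acts on the rational curves of $\Delta_Y$ (some of which may be contracted or mapped off the boundary) must be used; the rest is formal from Propositions \ref{prop:nef-valuation} and \ref{prop:charac-cercle}, Goodman's theorem, and the Hodge index theorem.
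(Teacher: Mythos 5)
Your proof is correct and follows essentially the same route as the paper: decompose $f_*Z_v=Z_{f_*v}+w_v$ with $w_v\in\Cinf^{\perp}$, use nefness of $f_*Z_v$ together with $Z_{f_*v}^2\le 0$, $w_v^2\le 0$ and the Hodge index theorem to force $w_v=0$, verify that $f_*v$ stays centered at infinity by pairing with Goodman's ample class, and deduce injectivity from Corollary \ref{cor:preimage-one}. Your explicit reduction to divisorial $v$ (so that $Z_{f_*v}$ is Cartier and the cross term $Z_{f_*v}\cdot w_v$ vanishes) makes precise a point the paper's proof leaves implicit, but the argument is otherwise the same.
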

  \begin{proof}
    Let $v \in \CCinf$, then $Z_v$ nef and $f_* Z_v$ also is. In general we have in $\cNS (X_0)_\R$
    \begin{equation}
      f_* Z_v = Z_{f_*v} +w_v^2
      \label{<+label+>}
    \end{equation}
    where $w_v \in \Cinf^\perp \cap \cNS (X_0)$. But now, 
    \begin{equation}
      0 \leq (f_* Z_v)^2 = Z_{f_*v}^2 + w_v^2.
      \label{<+label+>}
    \end{equation}
    By Proposition \ref{prop:charac-cercle} we have $Z_{f_*v}^2 \leq 0$ and also $w_v^2 \leq 0$
    because $\Cinf$ contains ample Cartier classes. This implies that $Z_{f_* v}^2 = w_v^2 = 0$. By the Hodge index
    theorem we have $w_v = 0$. Now we just have to make sure that $Z_{f_*v} \neq 0$ i.e. that $f_* v$ is still
    centered at infinity. By Goodman's result there exists $H \in \Cinf$ ample. Therefore, 
    \begin{equation}
      Z_{f_* v} \cdot H = f_* Z_v \cdot H = Z_v \cdot f^*H >0
      \label{<+label+>}
    \end{equation}
    where the inequality comes from the Hodge index theorem.
    So $Z_{f_* v}^2 = 0$ and by Proposition \ref{prop:charac-cercle} this implies that $f_* v \in \CCinf$.
    By Corollary \ref{cor:preimage-one} applied with Proposition \ref{prop:charac-cercle}, any $v \in \CCinf$ can have
    at most one preimage in $\CCinf$ so that the map $f_* : \CCinf \rightarrow \CCinf$ is injective.
  \end{proof}
  Now, let $X$ be a cyclic completion of $X_0$ and let $\pi :Y \rightarrow X$ be a morphism of completions such that the
  lift $F : Y \rightarrow X$ of $f$ is regular. Note that a priori $Y$ is not a cyclic completion. By the ramification
  formula we have 
  \begin{equation}
    K_Y + \Delta_Y = F^* (K_X + \Delta_X) +R
    \label{<+label+>}
  \end{equation}
  with $R$ an effective divisor in $Y$. In particular, over $X_0$ this yields
  \begin{equation}
    K_{X_0} = f^* K_{X_0} + R_{|X_0}
    \label{<+label+>}
  \end{equation}
  and $f$ is unramified if and only if $R_{|X_0} = 0$.
  The effective divisor $R$ is of the form 
  \begin{equation}
    R = \sum_C a_C C + \sum_{F \subset \Delta_Y} b_F F
    \label{eq:ramification-divisor}
  \end{equation}
  with $C$ irreducible curves in $Y$ such that $C \cap X_0 \neq \emptyset$, $a_C > 0$ and $b_F \geq 0$. Let $E$ be a
  prime divisor at infinity in $X$, in particular $\ord_E \in \CCinf$ and $f_* \ord_E \in \cC_\infty$. Proposition
  \ref{prop:charac-cercle} and Lemma \ref{lemme:circle-invariant} imply
  \begin{equation}
    \hat E \cdot (K_Y + \Delta_Y) = 0 \text{ and } F_* \hat E \cdot (K_X + \Delta_X) = 0.
    \label{<+label+>}
  \end{equation}
  Thus,
  \begin{equation}
    \hat E \cdot R = 0.
    \label{<+label+>}
  \end{equation}
  Since $\hat E$ is nef, we get in \eqref{eq:ramification-divisor} that $b_E = 0$. Now, by Goodman's result, there exists an ample effective divisor $H$ over $X$ such that $\Supp H = \Delta_X$. Writing 
  \begin{equation}
    H = \sum_{E \subset \Delta_X} (H\cdot E) \hat E
    \label{<+label+>}
  \end{equation}
  we have 
  \begin{equation}
    0 = \pi^* H \cdot R = H \cdot \pi_* R = \sum_C a_C (H \cdot \pi (C)).
    \label{<+label+>}
  \end{equation}
  Since $H$ is ample the sum must be empty because $\pi$ is an isomorphism over $X_0$. In particular, $R \in \DivInf (Y)$ and therefore
  $R_{|X_0} = 0$ so that $f$ is unramified over $X_0$ and therefore étale.

  \paragraph{\textbf{Second step: $f$ is proper}} If $f$ is not proper then it admits nonproper curves $C_1, \dots , C_r$. We
  show that this leads to a contradiction. 
  \begin{lemme}\label{lemme:nice-completion}
    There exists a cyclic completion $X$ of $X_0$ such that 
    \begin{enumerate}
      \item For $i=1, \dots, r$ every intersection point of $f(C_i)$ with $\Delta_X$ is a free point over $\Delta_X$. 
      \item For $i = 1,\dots, r, c_X (\ord_{C_i})$ is a free point on $\Delta_X$.
    \end{enumerate}
    Furthermore, these two properties remain true for any cyclic completion above $X$.
  \end{lemme}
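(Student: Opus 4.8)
The strategy is to obtain $X$ from an arbitrary cyclic completion by a finite sequence of blow-ups of satellite points. The basic facts I will use are: blowing up a satellite point of a cyclic completion produces a cyclic completion (the new $(-1)$-curve simply subdivides an edge of the cycle), and, conversely, any morphism $Y\to X$ of cyclic completions is a tower of such blow-ups (this follows from Gizatullin's description of cyclic completions, cf. \cite{gizatullinInvariantsIncompleteAlgebraic1971}); and that for each nonproper curve $C_i$ one has $\ord_{C_i}\notin\CCinf$. The latter holds because $f_*\ord_{C_i}$ is a positive multiple of $\ord_{f(C_i)}$, which is not centered at infinity since $f(C_i)\subset X_0$ by Proposition \ref{prop:non-proper-curves}, whereas $f_*(\CCinf)\subset\CCinf$ by Lemma \ref{lemme:circle-invariant}. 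In particular $C_i$ is never a boundary component of a cyclic completion, so $c_X(\ord_{C_i})$ is always a closed point of $\Delta_X$.

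\emph{Condition (2).} Fix $i$ and start from any cyclic completion $X^{(0)}$. As long as $c_{X^{(k)}}(\ord_{C_i})$ is a satellite point of $\Delta_{X^{(k)}}$, blow it up to obtain a cyclic completion $X^{(k+1)}$; stop once it is a free point. This process must terminate: if it did not, $\ord_{C_i}$ would dominate infinitely many infinitely near points, which is impossible for a divisorial valuation; and it cannot terminate by creating $C_i$ as a new boundary component, for that would force $\ord_{C_i}\in\CCinf$. Carrying this out successively for $i=1,\dots,r$ is harmless: once $c(\ord_{C_j})$ has become a free point, every later blow-up is centered at a satellite point, hence at a point distinct from that free point, and so leaves the center of $\ord_{C_j}$ free.

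\emph{Condition (1).} Write $D_i:=f(C_i)$, a curve in $X_0$, and let $\overline{D_i}$ be its closure in the completion $X$ produced above. If $\overline{D_i}$ meets $\Delta_X$ at a satellite point $p=E\cap F$, blow up $p$; the strict transform of $\overline{D_i}$ meets the new component at a single point, which, if it is again a satellite point — necessarily on the strict transform of $E$ or of $F$ — is blown up in turn. Since $\overline{D_i}\neq E$ and $\overline{D_i}\neq F$ (both lie at infinity while $D_i$ does not), the intersection numbers $\overline{D_i}\cdot E$ and $\overline{D_i}\cdot F$ are finite and drop strictly at each such step, so the procedure terminates with $\overline{D_i}$ meeting $\Delta_X$ only at free points. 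All these blow-ups are at satellite points, hence do not touch the free centers $c_X(\ord_{C_j})$, so condition (2) is preserved.

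\emph{Stability under further cyclic completions.} Let $Y\to X$ be a morphism of cyclic completions, i.e. a tower of blow-ups of satellite points. A free point $q$ of $\Delta_X$ — whether it be some $c_X(\ord_{C_i})$ or a point of $\overline{f(C_i)}\cap\Delta_X$ — is never one of the points blown up, so it survives in $Y$ as a free point of $\Delta_Y$ with $c_Y(\ord_{C_i})=q$ and with $\overline{f(C_i)}$ still meeting $\Delta_Y$ there; and no new intersection point of $\overline{f(C_i)}$ with $\Delta_Y$ is created, since by (1) the curve $\overline{f(C_i)}$ avoids every satellite point of $\Delta_X$ and hence its strict transform meets none of the exceptional components of $Y\to X$. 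Thus (1) and (2) hold in $Y$ as well. The delicate points are the two termination arguments above and the structural claim that morphisms of cyclic completions are precisely towers of blow-ups of satellite points, which is what simultaneously yields the stability statement and the non-interference between the procedures for (1) and (2).
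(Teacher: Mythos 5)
Your proof is correct and follows the same strategy as the paper's (which is only a brief sketch): blow up satellite points until the centers $c_X(\ord_{C_i})$ and the intersection points of $\overline{f(C_i)}$ with $\Delta_X$ become free. You additionally supply the termination arguments, the observation that $\ord_{C_i}\notin\CCinf$, and the proof of the ``furthermore'' stability clause, all of which the paper leaves implicit.
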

  \begin{proof}
    Start with any cylic completion $X$. We start by blowing up every center of $\ord_{C_i}$ if it is a satellite point.
    After finitely many blowups, (2) is satisfied. Then, we blow up the intersection points of $f(C_i)$ with the
    boundary whenever the intersection point is a satellite point. After finitely many steps (1) will also be satisfied.
  \end{proof}
  Figure \ref{fig:technique} is an example of a possible configuration of the completion $X$. Every non-proper divisor is centered on a unique
  point on the boundary (the centers are in blue) and their image by $f$ is a curve in $X_0$ such that every
  intersection point with $\Delta_X$ belongs to a unique irreducible component of $\Delta_X$ (the curves are in red).
  \begin{figure}[h]
    \centering
    \includegraphics[scale = 0.5]{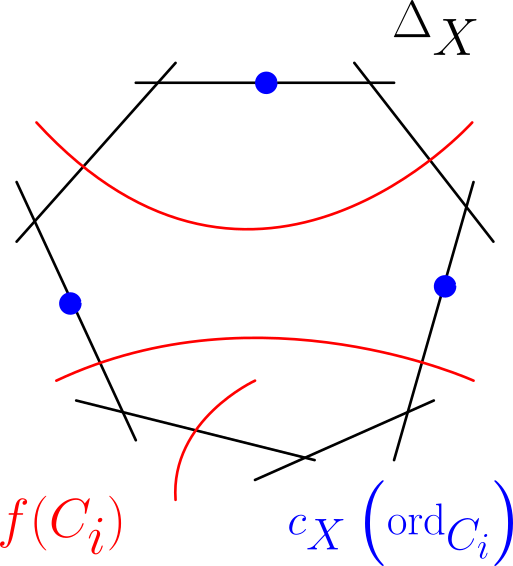}
    \caption{A completion $X$ satisfying Lemma \ref{lemme:nice-completion}}
    \label{fig:technique}
  \end{figure}

  \begin{cor}\label{cor:technique}
    For all but finitely many divisorial $v \in \cC_\infty$, we have 
    \begin{equation}
  Z_v \cdot W_{ f(C_i) } > 0, \quad Z_v \cdot \hat C_i > 0.
      \label{eq:positivity}
    \end{equation}
  \end{cor}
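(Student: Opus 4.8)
The plan is to work inside the cyclic completion $X$ furnished by Lemma~\ref{lemme:nice-completion}, which we may arrange to dominate a completion whose boundary is a triangle of $(-1)$-curves (Corollary~\ref{cor:completion-triangle-at-infinity}); consequently every irreducible component of $\Delta_X$, and of every cyclic completion dominating $X$, has negative self-intersection. Call a prime divisor at infinity of $X_0$ \emph{old} if it is the strict transform of an irreducible component of $\Delta_X$ --- there are finitely many, one per component of the cycle --- and \emph{new} otherwise; this is a well-defined dichotomy for divisorial valuations in $\cC_\infty$, since any such valuation is $\ord_E$ for $E$ a component of some cyclic completion, which, after blowing up satellite points, we may take to dominate $X$, and then $E$ is either the strict transform of a component of $\Delta_X$ or a satellite blow-up divisor over $X$. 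I will show that the $v$ for which \eqref{eq:positivity} fails are all old, forming a set contained in the union, over $i=1,\dots,r$, of $\{\ord_{E^{(i)}}\}$ together with $\{\ord_F : F$ a component of $\Delta_X$ met by $\overline{f(C_i)}\}$, where $E^{(i)}$ is the component of $\Delta_X$ carrying the free point $c_X(\ord_{C_i})$ (Lemma~\ref{lemme:nice-completion}(2)); this set is manifestly finite.

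The core input is a positivity statement for the incarnations of the Cartier classes $\hat E=Z_{\ord_E}$. By Proposition~\ref{prop:vanishing-self-intersection} we have $Z_v^2=0$ for every $v\in\cC_\infty$, so by Proposition~\ref{prop:nef-valuation} the class $Z_v=\hat E$ is nef and effective whenever $v=\ord_E$ is divisorial in $\cC_\infty$. Fix a cyclic completion $Y\supseteq X$ in which $E=F_1$ is a component of the cycle $\Delta_Y=F_1\cup\cdots\cup F_N$, $N\ge 3$, and write $(\hat E)_Y=\sum_{k}c_kF_k$. Effectivity gives $c_k\ge 0$, and $(c_k)$ is nonzero since $(\hat E)_Y\cdot F_1=1$. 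The claim is that $c_k>0$ for every $k\ne 1$: from $0=(\hat E)_Y\cdot F_k=c_{k-1}+c_{k+1}+c_kF_k^2$ (indices mod $N$), a vanishing $c_k$ with $k\ne 1$ would force both cycle-neighbours of $F_k$ to have vanishing coefficient, which propagates around the cycle to $c_2=\cdots=c_N=0$ and then to the contradiction $1=(\hat E)_Y\cdot F_1=c_1F_1^2\le 0$. In other words, among the components of $\Delta_Y$ the only one $\hat E$ can omit from its support is $E$ itself.

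Granting this, both inequalities of \eqref{eq:positivity} follow by pairing against the right incarnation. For $Z_v\cdot W_{f(C_i)}$ with $v=\ord_E$, pick $Y\supseteq X$ cyclic with $E$ a component; then $Z_v\cdot W_{f(C_i)}=(\hat E)_Y\cdot (W_{f(C_i)})_Y=\sum_k c_k\bigl(F_k\cdot (W_{f(C_i)})_Y\bigr)$ is a sum of nonnegative terms. By Lemma~\ref{lemme:nice-completion}(1), in every cyclic completion dominating $X$ the curve $\overline{f(C_i)}$ meets the boundary only at free points, hence passes through no satellite point of the cycle, so its strict transform $(W_{f(C_i)})_Y$ meets $\Delta_Y$ only along old components; it does meet some old component $F_{k_0}$, a complete curve not lying in the affine surface $X_0$. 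Therefore $Z_v\cdot W_{f(C_i)}\ge c_{k_0}\bigl(F_{k_0}\cdot (W_{f(C_i)})_Y\bigr)>0$ unless $E=F_{k_0}$, i.e.\ unless $E$ is one of the finitely many old divisors met by $\overline{f(C_i)}$. For $Z_v\cdot\hat C_i=L_{\ord_{C_i}}(\hat E)$: by Lemma~\ref{lemme:nice-completion}(2) the point $p:=c_Y(\ord_{C_i})$ is free on $\Delta_Y$ and lies on a single component, which is old, namely the strict transform of $E^{(i)}$; by $\R$-linearity of $L_{\ord_{C_i}}$ only this component contributes, so $Z_v\cdot\hat C_i=c_{E^{(i)}}\cdot\ord_{C_i}(h)$, where $h$ is a local equation of that component at $p$ and $c_{E^{(i)}}$ is its coefficient in $(\hat E)_Y$. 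Since $\ord_{C_i}(h)>0$ (as $p$ is the centre of $\ord_{C_i}$), this is positive as soon as $c_{E^{(i)}}>0$, i.e.\ as soon as $E\ne E^{(i)}$ as components of $\Delta_Y$, i.e.\ $v\ne\ord_{E^{(i)}}$. Taking the union over the finitely many $i$ yields a finite exceptional set, as announced.

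The main obstacle is the combinatorial claim $c_k>0$ for $k\ne 1$ --- equivalently, that a nef effective class $\hat E$ on a cyclic completion omits from its support no boundary component other than $E$ --- since this is exactly what pins down the exceptional set as finite and explicit; the argument is elementary once cycle components are known to have self-intersection $\le 0$, which is the only reason for passing to a completion dominating the triangle of lines. The remaining care points are routine: that ``old'' does not depend on the cyclic completion chosen above $X$; that $\overline{f(C_i)}$ and $c_X(\ord_{C_i})$, both attached to \emph{free} points, are genuinely disjoint from every satellite blow-up divisor (so the intersection numbers with new divisors vanish); and computing the pairings $Z_v\cdot W_{f(C_i)}$ and $Z_v\cdot\hat C_i=L_{\ord_{C_i}}(\hat E)$ in a model where $\hat E$ is Cartier (Lemma~\ref{lemme:divisorial-valuation-dual-class}) and where $\overline{f(C_i)}$, resp.\ $C_i$, is visible (Definition~\ref{dfn:weil-class-curve}).
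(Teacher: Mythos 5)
Your proof is correct, and its overall architecture matches the paper's: work in the completion $X$ furnished by Lemma \ref{lemme:nice-completion}, show that the relevant incarnation of $Z_v$ has strictly positive coefficients along the boundary components, and then pair against $W_{f(C_i)}$ (whose strict transform equals its total transform in any completion above $X$, since it meets $\Delta_X$ only at free points) and against $\hat C_i$ via a local equation at the free point $c_X(\ord_{C_i})$. The one place where you genuinely diverge is the positivity of the coefficients $c_k$ of $(\hat E)_Y=\sum_k c_k F_k$, which you single out as the main obstacle and prove by propagating the relation $0=c_{k-1}+c_{k+1}+c_kF_k^2$ around the cycle; this is what forces your detour through completions dominating the triangle of $(-1)$-curves, so that all $F_k^2<0$. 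That detour is unnecessary: since the $\hat F_k$ form the dual basis, $c_k=\ord_{F_k}\bigl((\hat E)_Y\bigr)=\hat E\cdot\hat F_k$, which is $>0$ for $F_k\neq E$ by Corollary \ref{cor:intersection-positive} (and $c_1=(\hat E)^2=0$ by Proposition \ref{prop:vanishing-self-intersection}, consistent with your description of the exceptional set). This is exactly the input the paper uses, in the form $\Supp Z_{v,X}=\Delta_X$ for every divisorial $v$ whose center in $X$ is a closed point; the paper's exceptional set is then simply the finitely many $\ord_E$ with $E\subset\Delta_X$, whereas yours is a differently described (also finite, in fact smaller) set. What your version buys is a self-contained combinatorial proof of the support statement that bypasses the Hodge-index argument behind Corollary \ref{cor:intersection-positive}; what it costs is the extra reduction to the triangle completion and a longer old-versus-new case analysis.
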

  \begin{proof}
    Let $X$ be a cyclic completion satisfying Lemma \ref{lemme:nice-completion}. We show that for any $v \in \cC_\infty$
    such that $c_X (v)$ is a closed point (in particular it is a satellite point) Equation \eqref{eq:positivity} holds. 

    Indeed, by Corollary \ref{cor:intersection-positive}, $\Supp Z_{v,X} = \Delta_X$ and for any cyclic completion $\pi
    : Z \rightarrow X$ above $X$, we have that 
    \begin{equation}
      \pi^* f(C_i) = \pi' f(C_i) = W_{f(C_i),Z}.
      \label{<+label+>}
    \end{equation}
    So that 
    \begin{equation}
      Z_v \cdot W_{f(C_i)} = Z_{v,X}\cdot W_{f(C_i),X} > 0.
    \end{equation}
    Furthermore, if $E$ is a prime divisor at infinity such that $c_X (\ord_{C_i}) \in E$, then $\hat C_{i, X} = \alpha
    \hat E$ for some $\alpha > 0$ and $\ord_E (Z_v) > 0$ so that 
    $\ord_{C_i} (Z_v) = \alpha \ord_E (Z_v) > 0$ which shows the result.
  \end{proof}

  Now, we show that $f : X_0 \rightarrow X_0$ is proper. Suppose this is not the case and let $C_1, \dots, C_r$ be the
  nonproper curves of $f$. By \eqref{eq:pullback}, we have for every
  $v \in \cC_\infty$ divisorial that 
  \begin{equation}
    f^* Z_v = \sum_{f_* w = v} a_w Z_w + \sum_{i=1}^r d_i (W_{f(C_i)} \cdot Z_v ) \hat C_i.
    \label{<+label+>}
  \end{equation}
  By Lemma \ref{lemme:circle-invariant} and Corollary \ref{cor:technique} there exists $w_0 \in \cV_\infty$ divisorial such that $w_0$ and $v := f_*
  w_0$ satisfy Corollary
  \ref{cor:technique}. Then, we have 
  \begin{equation}
    0 = Z_{v}^2 = f_* Z_{w_0} \cdot Z_v =  Z_{w_0} \cdot f^* Z_v = \sum_{f_* w = v} a_w Z_w \cdot Z_{w_0} + \sum_{i = 1}^r d_i (W_{f(C_i)} \cdot
    Z_v) \hat C_i \cdot Z_{w_0}.
    \label{eq:vanishing-intersection-number}
  \end{equation}
  All the terms appearing in this sum are $\geq 0$ so they must all vanish. But this contradicts Corollary
  \ref{cor:technique}. This implies that $f$ has no nonproper curves and therefore $f$ is proper.

\bibliographystyle{alpha}

\end{document}